\documentclass[10pt]{article}

\usepackage[utf8]{inputenc}
\usepackage[T1]{fontenc}
\usepackage[english]{babel}

\usepackage{geometry}
\usepackage{xcolor}
\usepackage[
    colorlinks=true,
    linkcolor=blue,
    citecolor=blue,
    urlcolor=blue
]{hyperref}

\usepackage{amsmath, amssymb, amsthm}

\usepackage{lmodern}

\newtheorem{main}{Theorem}

\newtheorem{cor}[main]{Corollary}

\usepackage{graphicx}

\newtheorem{theorem}{Theorem}[section]
\newtheorem{lemma}[theorem]{Lemma}
\newtheorem{proposition}[theorem]{Proposition}
\newtheorem{corollary}[theorem]{Corollary}

\theoremstyle{definition}

\newtheorem{claim}[theorem]{Claim}

\theoremstyle{remark}
\newtheorem{remark}[theorem]{Remark}

\title{\textbf{Foliations and minimality for twist maps}}
\author{
Salvador Addas-Zanata\thanks{ would like to thank FAPESP (project 2023/07076-4) for partial funding.}
\and
Julian Lazaro Aguirre\thanks{ would like to thank CAPES (project 88887.500833/2020-00) for funding.}
}
\date{}

\begin{document}

\maketitle

\centerline { {\sl Instituto de Matem\'atica e Estat\'\i stica }}

\centerline {{\sl Universidade de S\~ao Paulo}}

\centerline {{\sl Rua do Mat\~ao 1010, Cidade Universit\'aria,}} 

\centerline {{\sl 05508-090 S\~ao Paulo, SP, Brazil}}

\begin{abstract}
In this paper we consider area-preserving twist maps $f:\mathbf{T}^2\to\mathbf{T}^2$ with  vertical rotation sets reduced to a single irrational number  $\alpha$. Our first result states that under these hypotheses, there exists a $f$-invariant foliation $\mathcal{F}_G=\{\operatorname{Graph}(\phi_t)\}_{t\in\mathbf{T}^1}$, where each function $\phi_t:\mathbf{T}^1\to\mathbf{T}^1$ is $K$-Lipschitz for some constant $K=K(f)$. 

Moreover, we also prove that in case $\mathcal{F}_G$ is a Lipschitz foliation and the leaf dynamics, which is always a transitive circle homeomorphism of rotation number $\alpha$, is bi-Lipschitz conjugate to the irrational rotation $R_{\alpha}$, then $f$ is minimal. 

\medskip

\noindent\textbf{Keywords:} twist maps, vertical rotation set, foliations, minimality.

\end{abstract}

\tableofcontents

\section{Introduction}

This paper is motivated by the following general question:

\vskip0.1truecm

$\bullet$ How are certain properties of rotation sets reflected in the dynamics of 
homeomorphisms of closed oriented surfaces?  

\vskip0.2truecm

In the present case, we are not in the identity isotopy class as usual,
but we restrict ourselves to maps of the torus homotopic to Dehn twists, satisfying an additional 
dynamical property, the twist condition. This means that, if we consider a lift of 
the torus map to the plane, then the image of any vertical line projects injectively onto the horizontal
coordinate. 

Under these hypotheses, the rotation set is only one dimensional (see \cite{alunofranks} and \cite{non04}), 
and similarly to the isotopic to the identity class, it is convex. So,
the only possibilities are: a point or a non-degenerate closed interval. We
will deal solely with the first possibility. Moreover, throughout the paper, this 
single number which captures the vertical rotational behaviour of all points in the 
torus will be assumed irrational.

The rational case was studied in \cite{eubra} and there we proved that when the vertical rotation
set of a torus homeomorphism $f$ homotopic to a Dehn twist is reduced to a single rational number $p/q$,  
then the dynamics is annular, meaning that if $\widehat{f}$ is the lift of $f$ to the vertical 
cylinder used to compute the vertical rotation set, then there exists an essential continuum in the cylinder, $K_{p/q}$,
which is invariant under $\widehat{f}^q-(0,p)$. Clearly, the same thing happens for all its vertical integer translates.
This is what we mean by annular dynamics: with respect to $\widehat{f}^q-(0,p)$, all orbits are uniformly bounded in the cylinder.  

In that same paper, we showed that in the Dehn twist homotopy class, minimality 
can only happen for vertical rotation sets reduced to a single irrational point. 

The search for minimality was the main motivation for this work, and an important guiding model was
the affine example which is written in flat
coordinates, as: 

\begin{equation}\label{equaI01}
Aff(x,y) := (x+y\bmod 1,\; y+\alpha\bmod 1),
\end{equation}
where $\alpha \notin \mathbb{Q}$.

It has long been known that $Aff$ is minimal, preserves Lebesgue measure and is uniquely ergodic. 
Moreover, it preserves the trivial foliation $\mathcal{F}_C=\{\mathbf{T}^1\times\{t\}:t\in\mathbf{T}^1\}$.

For general Lebesgue measure-preserving twist diffeomorphisms of the torus whose rotation sets are reduced to a single irrational number, we show that their dynamics
have many similarities with the dynamics of $Aff$, at least up to a certain point.

Taking into consideration some previous results on the subject, the following duality appears:

\vskip0.1truecm

\textbf{Classification Result.} \textit{Given an area-preserving twist diffeomorphism $f$ of the torus, either it has periodic points, or $f$ admits an invariant foliation by Lipschitz graphs over the horizontal coordinate. Moreover, if the foliation and the leaf dynamics induced by $f$ have some regularity, then $f$ is minimal.}

\vskip0.1truecm

In the next subsection we present definitions, so that our main results can be precisely stated.

\subsection{Notation and definitions}

\begin{description}
    \item[(1)] Let $\mathbf{T}^1:=\mathbb{R}/\mathbb{Z}$ and $\mathbf{T}^2:=\mathbb{R}^2/\mathbb{Z}^2$ be the flat torus. A point in $\mathbb{R}^2$ we will be denoted as $\widetilde{z}=(\widetilde{x},\widetilde{y})$, in the cylinder $\mathbf{T}^1\times \mathbb{R}$ as $\widehat{z}=(\widehat{x},\widehat{y})$, and a point in $\mathbf{T}^2$ will be denoted as $z=(x,y)$. Let $p_1, p_2:\mathbb{R}^2\to \mathbb{R}$ denote the canonical projections, given by $p_1(\widetilde{x},\widetilde{y}):=\widetilde{x}$ and $p_2(\widetilde{x},\widetilde{y}):=\widetilde{y}$. The projections on the torus and on the cylinder will be denoted in the same way. Finally, let $p:\mathbb{R}^2\to \mathbf{T}^2$, $\pi:\mathbb{R}^2\to \mathbf{T}^1\times \mathbb{R}$, and $\tau:\mathbf{T}^1\times\mathbb{R} \to \mathbf{T}^2$ be the covering maps defined as $p(\widetilde{x},\widetilde{y}):=(\widetilde{x}\bmod 1, \widetilde{y}\bmod 1)$, $\pi(\widetilde{x},\widetilde{y}):=(\widetilde{x}\bmod 1, \widetilde{y}),$ and $\tau(\widehat{x},\widehat{y}):=(\widehat{x},\widehat{y}\bmod 1).$ 
    \item[(2)] 
For $\ell\in\mathbb{Z}\setminus\{0\}$, let $\mathrm{Diff}_{\mathrm{tw}}^1(\mathbf{T}^2)$ denote the set of $C^1$-diffeomorphisms of $\mathbf{T}^2$ that are homotopic to a \emph{Dehn twist}: $(x,y)\mapsto (x+\ell y\bmod 1, y\bmod 1)$, and satisfy a \emph{twist condition}: there exists a constant $k_{tw}>0$ such that, for any lift $\widetilde{f}:\mathbb{R}^2\to\mathbb{R}^2$ of $f$,
    \begin{equation}\label{equaI02}
       \frac {\ell}{|\ell|} \times\frac{\partial p_1 \circ \widetilde{f}(\widetilde{x},\widetilde{y})}{\partial\widetilde{y}} > k_{tw}\quad \text{for all } (\widetilde{x},\widetilde{y})\in\mathbb{R}^2.
    \end{equation}
    If $\ell>0$, we say that $f$ is a \emph{right twist map}, and when it is smaller than zero, it is a left one. The inverse of a right twist map is a left one, and vice-versa. 

We also denote by $\mathrm{Diff}_{\mathrm{tw}}^1(\mathbb{R}^2)$ (respectively, $\mathrm{Diff}_{\mathrm{tw}}^1(\mathbf{T}^1\times\mathbb{R})$) the set of diffeomorphisms of the plane (respectively, of the cylinder) that are lifts of elements of $\mathrm{Diff}_{\mathrm{tw}}^1(\mathbf{T}^2)$. If an element of $\mathrm{Diff}_{\mathrm{tw}}^1(\mathbf{T}^2)$ is denoted $f$, any lift of $f$ to the vertical cylinder is denoted as $\widehat{f}$ and to the plane as $\widetilde{f}$. 

Here, we also present a weaker definition, of topological twist:
Let $f:\mathbf{T}^{2}\to \mathbf{T}^{2}$ be a
homeomorphism homotopic to a Dehn twist $(x,y)\mapsto (x+\ell y\bmod 1, y\bmod 1)$, 
for some $\ell\in\mathbb{Z}\setminus\{0\}$, and let $\widetilde{f}:\mathbb{R}^{2}\to \mathbb{R}^{2}$ be a
lift of $f$. We say that $f$ satisfies the \emph{topological twist condition}
if the image of any vertical in $\mathbb{R}^{2}$ under $\widetilde{f}$
projects injectively in the horizontal coordinate. In this topological context, 
the concept of right and left twist can also be applied.

    \item[(3)] For any $\widehat{f}\in\mathrm{Diff}_{\mathrm{tw}}^1(\mathbf{T}^1\times \mathbb{R})$, we define the \emph{vertical rotation set} of $\widehat{f}$ as in \cite{alunofranks} or \cite{non04}, 
    \begin{equation}\label{equaI03}
        \rho_{V}(\widehat{f}):=\bigcap_{i=1}^{\infty}\overline{\bigcup_{n\geq i}\left\{\dfrac{p_2\circ \widehat{f}^n(\widehat{z})-p_2(\widehat{z})}{n} : \widehat{z}\in \mathbf{T}^1\times \mathbb{R}\right\}}\subset\mathbb{R}.
    \end{equation}
    It was proved in \cite{alunofranks} and \cite{non04} that $\rho_{V}(\widehat{f})\neq\emptyset$ is compact and connected, therefore it is either a singleton or a non-degenerate compact interval. We also define the \emph{vertical rotation number} of $z\in\mathbf{T}^2$ with respect to $\widehat{f}$ by
    \begin{equation}\label{equaI04}
        \rho_{V}(\widehat{f},z):=\lim_{n\to \infty}\dfrac{p_2\circ \widehat{f}^n(\widehat{z})-p_2(\widehat{z})}{n},\quad \text{ for any } \widehat{z}\in \tau^{-1}(z),
    \end{equation}
    whenever the limit exists.

When $\widetilde{f}\in\mathrm{Diff}_{\mathrm{tw}}^1(\mathbb{R}^2)$ is a lift to the plane of $\widehat{f}$, 
one can compute an analogous definition for $\widetilde{f},$ and clearly $\rho_{V}(\widetilde{f})=\rho_{V}(\widehat{f}).$ 


    \item[(4)] A nonempty open set $U\subset \mathbf{T}^2$ is said to be \emph{inessential} if every closed curve contained in $U$ is homotopically trivial in $\mathbf{T}^2$; otherwise, $U$ is said to be \emph{essential}. We also say that $U$ is \emph{fully essential} if it contains two homotopically nontrivial simple closed curves representing distinct homotopy classes. A connected open set $U\subset \mathbf{T}^2$ is called \emph{annular} if it is neither inessential nor fully essential. Finally, a closed set $K\subset \mathbf{T}^2$ is inessential, if it is contained in an inessential open set. 
    \item[(5)] Let $E\subset\mathbf{T}^2$ be an open or closed set. We define $Filled(E)$ as the union of $E$ with all inessential connected components of $\mathbf{T}^2\setminus E$.
    \item[(6)] Let $X$ be a topological space, and let $E\subset X$ be non-empty. We denote by $\partial_{X}E$ the boundary of $E$ relative to $X$, and by $\mathrm{Cl}_XE:=\mathrm{Int}_X(E)\cup \partial_XE$ the closure of $E$ in $X$, where $\mathrm{Int}_X(E)$ denotes the interior of $E$ in $X$. We also write $\mathrm{Cl}_XE=\overline{E}.$
    \item[(7)] We denote by $\mathcal{F}_G$ a foliation of $\mathbf{T}^2$ by simple closed curves, all homotopic to $(1,0)$, each of which is the graph of a $K$-Lipschitz function of the $x$-coordinate, for some constant $K=K(\mathcal{F}_G)>0$. We also denote by $\mathcal{F}_H:=\{\mathbf{T}^1\times\{t\} : t\in \mathbf{T}^1\}$ the horizontal foliation. 

Let $\mathcal{F}$ denote either $\mathcal{F}_G$ or $\mathcal{F}_H$. Given a homeomorphism $f:\mathbf{T}^2\to\mathbf{T}^2$, as usual, we say that $\mathcal{F}$ is $f$-\emph{invariant} if $f(L_z)=L_{f(z)}$ for every  $z\in \mathbf{T}^2$, where $L_z$ denotes the leaf of $\mathcal{F}$ containing $z$. The foliation $\mathcal{F}$ of $\mathbf{T}^2$ admits lifts to both the universal covering $\mathbb{R}^2$ and the cylinder $\mathbf{T}^1\times\mathbb{R}$. More precisely, there exist foliations $\widetilde{\mathcal{F}}$ on $\mathbb{R}^2$ and $\widehat{\mathcal{F}}$ on $\mathbf{T}^1\times\mathbb{R}$ such that $p(\widetilde{L}_{\widetilde{z}})=L_{p(\widetilde{z})}$ and $\tau(\widehat{L}_{\widehat{z}})=L_{\tau(\widehat{z})}$. Furthermore, if $\mathcal{F}$ is $f$-invariant, then any lift of $f$ to $\mathbb{R}^2$ (respectively to $\mathbf{T}^1\times\mathbb{R}$) leaves invariant the lifted foliation $\widetilde{\mathcal{F}}$ (respectively $\widehat{\mathcal{F}}$).
    \item[(8)] We say that such a foliation $\mathcal{F}_G$ is Lipschitz, if there exists a constant $K_{Fol}>0$ such that for any given two leaves of $\widehat{\mathcal{F}}_G$, graphs of functions $\widehat{g}_1,\widehat{g}_2$, and for all $x,x'\in \mathbf{T}^1$, the following holds:
$$
\frac{1}{K_{Fol}}. \left| \widehat{g}_1(x')-\widehat{g}_2(x') \right| \leq \left| \widehat{g}_1(x)-\widehat{g}_2(x) \right| \leq K_{Fol}. \left| \widehat{g}_1(x')-\widehat{g}_2(x') \right|
$$   
\end{description}

\subsection{Main results}

Now we present our main results. The first theorem establishes that every area-preserving twist map of $\mathbf{T}^2$ whose vertical rotation set consists of a single irrational number admits an invariant foliation by Lipschitz graphs, as described above.

\begin{main}\label{thm:A}
    Let $f\in\mathrm{Diff}_{\mathrm{tw}}^1(\mathbf{T}^2)$ be area-preserving, and suppose that $\rho_V(\widehat{f})=\{\alpha\}$ for some $\alpha\notin\mathbb{Q}$, where $\widehat{f}\in\mathrm{Diff}_{\mathrm{tw}}^1(\mathbf{T}^1\times \mathbb{R})$ is a lift of $f$. Then $f$ admits an invariant foliation $\mathcal{F}_G$. 
\end{main}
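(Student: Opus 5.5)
The plan is to build the leaves of $\mathcal{F}_G$ as boundaries of invariant "half-cylinders", obtained from the uniform vertical rotation behaviour. Aubry--Mather theory for twist maps then gives the Lipschitz graph structure.

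First I will show that every point has bounded deviation. Since $\rho_V(\widehat{f})=\{\alpha\}$ with $\alpha\notin\mathbb{Q}$ and $f$ preserves area, I will first establish a uniform bound. There is $M>0$ with $|p_2\circ\widehat{f}^n(\widehat z)-p_2(\widehat z)-n\alpha|\le M$ for all $\widehat z$ and $n\in\mathbb{Z}$. The approach mimics the Dehn-twist analogue of bounded-deviation results for annular homeomorphisms. For $\widehat y_0\in\mathbb{R}$, consider the lower half-cylinder $B_0=\mathbf{T}^1\times(-\infty,\widehat y_0]$ and the set
$$U=\bigcup_{n\in\mathbb{Z}}\bigl(\widehat{f}^n-(0,\lfloor n\alpha\rfloor+ \text{shifts})\bigr)(B_0).$$
The cleaner version uses the family of maps $\widehat{f}^n-(0,m)$ with $n\alpha-m$ in a fixed window. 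Lebesgue preservation together with the absence of rotation other than $\alpha$ should force the essential "upper boundary" of $\bigcup_{n\alpha-m<0}(\widehat f^{n}-(0,m))(B_0)$ to stay within bounded height. Otherwise one could extract, via the area argument on fundamental domains, a point with vertical rotation different from $\alpha$.

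Second, I will construct invariant essential curves. For each $t\in\mathbb{R}$, define
$$A_t=\bigcup\{(\widehat f^n-(0,m))(\mathbf{T}^1\times(-\infty,t]) : n\in\mathbb{Z},\, m\in\mathbb{Z},\, n\alpha - m\le 0\}.$$
By the first step $A_t$ is bounded above. It is forward-invariant under $\widehat f^n-(0,m)$ whenever $n\alpha-m\le0$. Since $\alpha$ is irrational, the group $\{\widehat f^n-(0,m)\}$ is ordered by $n\alpha-m$, and the upper frontier $\Gamma_t=\partial\,\mathrm{Filled}(A_t)$ becomes an essential closed set invariant under the whole group, up to the ordering. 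Next I will use the twist condition in its Birkhoff form: an invariant essential curve of a twist map is a Lipschitz graph, with a constant $K=K(f)$ depending only on $k_{tw}$ and $\|Df\|$. The invariant set here is a union of images of horizontal lines, so I will apply the cone-field argument: horizontal-ish cones mapped by $Df$ and $Df^{-1}$ land in complementary twist cones. This shows $\Gamma_t$ is the graph of a $K$-Lipschitz $\widehat\phi_t$. To get that graph property, I will apply Birkhoff's theorem to the upper boundary of the invariant region $\bigcap$/$\bigcup$ under the $\mathbb{Z}$-action by $\widehat f-(0,m)$ translates with the ordering.

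Third, I will assemble the foliation. The graphs $\widehat\phi_t$ are monotone in $t$. For $t\in\mathbb{R}$, the map $\widehat f$ sends $\operatorname{Graph}(\widehat\phi_t)$ to some $\operatorname{Graph}(\widehat\phi_{s})$, and vertical translation by $1$ also permutes them. The family $\mathcal{L}=\{\operatorname{Graph}\widehat\phi_t\}$ is then a closed, totally ordered lamination by $K$-Lipschitz graphs, invariant under $\widehat f$ and under $(0,1)$-translation. A gap between two consecutive leaves is an open annulus $V$. Its images under $\widehat f^n-(0,m)$ are pairwise disjoint, since the orbit has irrational ordering, and they project into $\mathbf{T}^2$ with equal area. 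That gives infinitely many disjoint sets of equal positive area in the torus, a contradiction. Hence there are no gaps, and $\mathcal{L}$ projects to a foliation $\mathcal{F}_G$ of $\mathbf{T}^2$ by $K$-Lipschitz graphs.

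The main obstacle is the first step, the uniform bounded deviation. I would try to prove it by contradiction: assume displacements $n\alpha-m$ and heights separate unboundedly, then use area preservation to find a closed invariant region in the torus contradicting $\rho_V=\{\alpha\}$. A secondary delicate point is checking that the frontier sets are genuinely curves, not indecomposable continua; this is where the twist condition is essential.
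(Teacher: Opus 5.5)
Your overall architecture matches the paper's: bounded deviations, then an equivariant family of essential frontiers, then graphs, then no gaps by an area argument. Your third step is also in the spirit of the paper's Part 4. However, the step that carries the real content is missing, and as you describe it, it cannot work. You want to apply Birkhoff's theorem to the upper boundary of a region invariant under some $\widehat f^n-(0,m)$, but no such region exists. Suppose an essential open set $U$ with $\mathbf{T}^1\times\,]-\infty,a]\subset U\subset\mathbf{T}^1\times\,]-\infty,b]$ satisfied $(\widehat f^n-(0,m))(U)=U$ with $n\neq 0$ (the case $n=0$ is trivially impossible). Then the points of $\partial U$ would have bounded orbits, hence vertical rotation number $m/n$, contradicting $\alpha\notin\mathbb{Q}$.

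Your sets $A_t$ are only forward invariant, and their frontiers are only equivariant, exactly like the paper's circloids, $\widehat f(\widehat C_r)=\widehat C_{r+\alpha}$. Fathi's argument removes $R(U)$ and $L(U)$ by comparing $\widehat f(\overline{R(U)})\subset R(\widehat f(U))$ with $R(U)$ itself, which uses invariance. Without invariance you must compare left and right components of different sets. The maps $\widehat C\mapsto R(\widehat C),L(\widehat C)$ are not continuous in the Hausdorff topology: a left component can split into left and right components of nearby circloids.

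The cone-field remark does not fill this gap. It yields the Lipschitz constant only once you already know the set and its images under $\widehat f^{\pm1}$ are graphs, which is Part 2 of Lemma \ref{lemTC07}. It cannot exclude vertical segments or left/right pockets in a frontier that is a priori just a continuum.

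The paper supplies four ingredients here:
\begin{itemize}
\item zero area of each $\widehat C_r$, from the semiconjugacy plus Poincar\'e recurrence;
\item the definite gain $\delta_1$ in $\mathrm{Leb}(R(\widehat f^n(\widehat C_r)))$ (Lemma \ref{lemTC00});
\item approximation of $\widehat C_r$ from above by $\widehat f^{n_i^+}(\widehat C_r)+(0,p_i^+)$ and $\widehat f^{-n_i^-}(\widehat C_r)+(0,p_i^-)$;
\item Proposition \ref{areaaa}, which uses the set $V_\infty$ to prove that the areas of $R$ and $L$ converge along such one-sided approximations.
\end{itemize}
These combine into the contradiction of Lemma \ref{lemacorrec}.

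Your first step is likewise only a hope. You give no mechanism by which area preservation bounds the deviations, and you never use the twist condition, which is what the paper's Theorem \ref{thmBD} rests on; that theorem does not use area at all. There, a large deviation produces, via Lemma \ref{lemBD01}, a point that returns to its own vertical after $q$ iterates with vertical displacement at least $q\alpha+2$. Le Calvez's essential continuum $C(s,q)$ then yields one of two outcomes. Either there is a point with $\widehat f^q(\widehat w)=\widehat w+(0,p)$, i.e. a periodic point. Or, by Lemma \ref{lemTRLC05}, there is a graph pushed strictly up by $\widehat f^q-(0,p)$, forcing $\rho_V(\widehat f)\ge p/q>\alpha$. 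You would need an argument of this kind, not an unspecified area argument on fundamental domains.
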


\begin{remark} If instead of Lebesgue measure, we assumed that $f$ preserves a measure of full support, positive on open sets, with no atoms ({\it homeomorphic to Lebesgue}), then Theorem \ref{thm:A} would also be true. In the non-wandering case, we do not know. Our arguments certainly make use of measures, and the fact that open sets have positive measure. 
\end{remark}

As a consequence of Theorem~\ref{thm:A}, all iterates of some $f$ as in that theorem, satisfy  a twist condition.
\begin{cor}\label{cor:B}
    Under the hypothesis of Theorem~\ref{thm:A}, the iterates $f^n$ 
satisfy the topological twist condition for all $n\in\mathbb{Z}\setminus\{0\}$.
\end{cor}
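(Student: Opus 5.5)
The plan is to use the invariant foliation $\mathcal{F}_G$ from Theorem~\ref{thm:A} to straighten $f$ into a skew-product form. In that form the twist condition becomes monotonicity in the transverse (leaf) coordinate, and this monotonicity is preserved under composition. Fix a lift $\widetilde f$ of $f$, say a right twist ($\ell>0$); the left case is symmetric, and the inverse of a right twist is a left twist, so negative $n$ reduces to the same argument applied to $f^{-1}$.

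\textbf{Step 1: leaf coordinates.} Lift $\mathcal{F}_G$ to $\widetilde{\mathcal{F}}_G$ on $\mathbb{R}^2$. Each leaf is the graph of a $K$-Lipschitz function $\widetilde\phi_t:\mathbb{R}\to\mathbb{R}$. Since the leaves foliate the plane, they are disjoint graphs and hence totally ordered. Parametrize them by $t\in\mathbb{R}$ with $\widetilde\phi_t(0)=t$. Define
\[
H(\widetilde x,\widetilde y)=(\widetilde x,t),
\qquad \text{where } \widetilde y=\widetilde\phi_t(\widetilde x).
\]
Each vertical line meets each leaf exactly once, and the leaves vary continuously (they form a foliation by equicontinuous graphs). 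Hence $H$ is a homeomorphism of $\mathbb{R}^2$ that preserves every vertical line, and it is strictly increasing in $\widetilde y$ on each vertical.

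\textbf{Step 2: the conjugated map.} Put $g=H\circ\widetilde f\circ H^{-1}$. Invariance of $\widetilde{\mathcal{F}}_G$ means $g$ maps horizontals to horizontals, so
\[
g(x,t)=\bigl(g_1(x,t),\theta(t)\bigr)
\]
with $\theta$ a homeomorphism of $\mathbb{R}$. Because $\widetilde f$ is isotopic to a lift of the Dehn twist, it preserves the up/down order of graphs, so $\theta$ is increasing. Because $H$ preserves verticals and their order, the twist condition for $\widetilde f$ says exactly that $t\mapsto g_1(x,t)$ is injective. It is therefore strictly monotone, and increasing because $\ell>0$ (compare with the Dehn twist via the homotopy, using $\partial_{\widetilde y}(p_1\circ\widetilde f)>0$). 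On each leaf, $x\mapsto g_1(x,t)$ is a homeomorphism of $\mathbb{R}$ (the leaf map lifts a circle homeomorphism). It is increasing: it is the lift of a degree-one circle map, since $f$ acts as the identity on the horizontal homology class $(1,0)$.

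\textbf{Step 3: induction.} Suppose the first coordinate $G_n(x,t)$ of $g^n$ is strictly increasing in both $x$ and $t$. Then
\[
G_{n+1}(x,t)=g_1\bigl(G_n(x,t),\,\theta^n(t)\bigr)
\]
is a composition of increasing functions, and it is strictly increasing in $t$ because both arguments increase with $t$ and $g_1$ is strictly increasing in each. Increase in $x$ is immediate. Thus $t\mapsto G_n(x,t)$ is injective for all $n\ge1$. Transporting back by $H$, which fixes first coordinates and maps verticals to verticals, the image of each vertical under $\widetilde f^{\,n}$ projects injectively to the horizontal axis. This is the topological twist condition for $f^n$.

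The main obstacle I expect is Step 2's orientation claims: that the leaf maps $x\mapsto g_1(x,t)$ are increasing, and that monotonicity in $t$ has the sign given by $\ell$. These need a careful homotopy/degree argument using the Dehn twist homotopy class. The other point to check is that $H$ is genuinely a homeomorphism, which uses continuity of the leaves in the foliation.
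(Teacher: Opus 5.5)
Your proof is correct, but it takes a different route from the paper. The paper never changes coordinates. It first notes that, since $f^{-n}(L)$ is again a leaf and hence a graph, every leaf meets $f^n(V_x)$ in exactly one point. Then, assuming $\widetilde f^{\,n}(V_{\widetilde x_0})$ meets some vertical $V_{\widetilde x}$ twice, it uses Le Calvez's description of the image of a vertical: there are highest and lowest intersection points at parameters $t_F\le t_L$, with the curve to the left of $V_{\widetilde x}$ for $t<t_F$ and to the right for $t>t_L$. It chooses a lifted leaf separating these two points and shows that this leaf is crossed both by $\widetilde f^{\,n}(\{\widetilde x_0\}\times[t_F,t_L])$ and by $\widetilde f^{\,n}(\{\widetilde x_0\}\times\,]-\infty,t_F])$, a contradiction.

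You instead straighten the foliation first. Your $H$ is the lift of the map $\Phi_1$ that the paper only builds later, in Proposition~\ref{propCFT01}; that $H$ is a homeomorphism is exactly the continuity of $t\mapsto\phi_t$ from Part 3 of the proof of Lemma~\ref{lemTC07}. After this, the statement reduces to the observation that skew products $(x,t)\mapsto(g_1(x,t),\theta(t))$, with $g_1$ strictly increasing in each variable and $\theta$ increasing, are closed under composition.

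Your route gives an elementary, self-contained argument with no appeal to Le Calvez's structure theory. It also gives a sharper conclusion: when $\ell>0$, $f^n$ twists to the right for $n>0$ and to the left for $n<0$. It even handles negative iterates internally. Solving $g_1(x,\theta^{-1}(u))=s$ for $x$ shows that the first coordinate of $g^{-1}$ is strictly decreasing in $u$, so you do not need to know separately that $f^{-1}$ is a twist map.

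The orientation facts you flagged as the main obstacle are immediate. Since $H$ fixes first coordinates, $g_1(x,t)=p_1\circ\widetilde f(x,\widetilde\phi_t(x))$. This is the composition of $t\mapsto\widetilde\phi_t(x)$, increasing because the lifted leaves are disjoint ordered graphs, with $\widetilde y\mapsto p_1\circ\widetilde f(x,\widetilde y)$, increasing by the $C^1$ twist condition. Monotonicity of $g_1$ in $x$ and of $\theta$ follow from $g_1(x+1,t)=g_1(x,t)+1$ and $\theta(t+1)=\theta(t)+1$, both consequences of $H$ commuting with the deck translations.
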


The next theorem shows that, under the assumptions of Theorem~\ref{thm:A} and the additional condition stated below, the map $f$ is minimal.

\begin{main}\label{thm:C}
    Under the hypotheses of Theorem~\ref{thm:A}, assume also that the invariant foliation $\mathcal{F}_{G}:=\{\mathrm{Graph}(\phi_t)\}_{t\in\mathbf{T}^1}$ is Lipschitz and the map induced by $f$ on the space of leaves of the foliation is bi-Lipschitz conjugate to the irrational rotation $R_{\alpha}$. In this situation, $f$ is minimal.
\end{main}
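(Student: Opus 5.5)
Under the Lipschitz hypotheses, I plan to straighten the foliation with a bi-Lipschitz change of coordinates. This turns $f$ into a skew product over $R_\alpha$ whose fiber maps are circle homeomorphisms. I will then prove minimality by showing that the closed invariant set obtained from any orbit closure must be everything.

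\textbf{Step 1 (coordinates).} Write $\mathcal{F}_G=\{\mathrm{Graph}(\phi_t)\}_{t\in\mathbf{T}^1}$, where $\phi_t$ is the leaf labelled $t$. Let $h$ be the bi-Lipschitz circle homeomorphism with $h\circ g\circ h^{-1}=R_\alpha$, where $g$ is the leaf map. Relabel leaves by $s=h(t)$ and define $\Psi(x,s)=(x,\phi_{h^{-1}(s)}(x))$. The Lipschitz property of the foliation (item (8)), combined with $h$ bi-Lipschitz and the uniform $K$-Lipschitz bound on the $\phi_t$, makes $\Psi$ a bi-Lipschitz homeomorphism of $\mathbf{T}^2$. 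Then
\[
F:=\Psi^{-1}\circ f\circ\Psi,\qquad F(x,s)=(\theta_s(x),\, s+\alpha),
\]
where each $\theta_s$ is a circle homeomorphism. The push-forward $\mu=\Psi^{-1}_*\mathrm{Leb}$ is $F$-invariant. Since $\Psi$ is bi-Lipschitz, $\mu$ is equivalent to Lebesgue with density bounded above and below. Disintegrating along $s$, whose marginal is $R_\alpha$-invariant and hence Lebesgue, gives fiber measures $\mu_s$ on the circles $\{s\}\times \mathbf{T}^1$. These satisfy $(\theta_s)_*\mu_s=\mu_{s+\alpha}$ and have densities uniformly bounded above and below for a.e. $s$.

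\textbf{Step 2 (reduce to a cocycle).} Conjugate each fiber by the distribution function of $\mu_s$. This is only measurable in $s$, but it is uniformly bi-Lipschitz in $x$. After the conjugation the fiber maps preserve Lebesgue measure, so they become rotations $x\mapsto x+\beta(s)$. Continuity of $F$, together with the uniform bi-Lipschitz bounds, should upgrade this: the normalized distribution functions $H_s(x)=\mu_s([0,x])$ form an equicontinuous family. I plan to use equicontinuity to pick a version of $s\mapsto\mu_s$ defined for every $s$ and weak-$*$ continuous in $s$. The route is to note that $\mu_{s+n\alpha}=(\theta^{(n)}_s)_*\mu_s$, where $\theta^{(n)}_s$ denotes the fiber map of $F^n$ over $s$. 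I then approximate $\mu_{s'}$ by $\mu_{s+n\alpha}$, using the density of the orbit of $s$ under $R_\alpha$ and equicontinuity of the iterates on $\mathbf{T}^2$. After this conjugation, $F$ is topologically conjugate to $(x,s)\mapsto(x+\beta(s),s+\alpha)$ with $\beta:\mathbf{T}^1\to\mathbf{T}^1$ continuous. The twist condition and the Dehn-twist homotopy class force $\beta$ to have nonzero degree $\ell$.

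\textbf{Step 3 (minimality of the model).} For the skew product $T(x,s)=(x+\beta(s),s+\alpha)$ with $\deg\beta=\ell\neq0$, I use Furstenberg's criterion. $T$ is minimal (indeed uniquely ergodic) unless there is a nontrivial measurable solution $\psi$ of $\psi(s+\alpha)-\psi(s)=k\beta(s)$ mod $1$ for some $k\neq0$. Suppose such a $\psi$ exists. Take the $\mathbf{T}^1$-valued function $e^{2\pi i\psi}$. Comparing the ergodic average of $\beta$'s lift, which has topological degree $\ell$, with the coboundary produces the standard contradiction: a measurable coboundary of a nonzero-degree map is impossible, e.g.\ via Fourier coefficients. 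This gives minimality of $T$, and hence of $f$.

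I expect the main obstacle to be Step 2, namely obtaining a \emph{continuous} measurable trivialization of the fiber measures. If that fails, the fallback is to argue directly. Let $M$ be a minimal set of $f$. Then $M$ meets every leaf, since the leaf dynamics is minimal. The $F$-invariant measure supported on $M$ disintegrates with fiber measures transported by the $\theta_s$. The twist condition then forces the fiber sections of $M$ to spread, which should show $M=\mathbf{T}^2$.
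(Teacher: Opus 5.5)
Your Step 1 is sound and matches the paper (Proposition~\ref{propCFT01}, Lemmas~\ref{lemPTC02} and~\ref{lemPTC01}). The genuine gap is Step 2, and Step 3 depends on it entirely.

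\textbf{Why Step 2 fails as written.} You want a weak-$*$ continuous version of $s\mapsto\mu_s$, obtained by approximating $\mu_{s'}$ with $\mu_{s+n\alpha}=(\theta^{(n)}_s)_*\mu_s$ and invoking ``equicontinuity of the iterates on $\mathbf{T}^2$''. The iterates are not equicontinuous. $F$ is homotopic to a Dehn twist, so a lift of $s\mapsto\theta^{(n)}_s(x)$ gains $n\ell$ when $s$ increases by $1$. Hence the $s$-Lipschitz constant of $\theta^{(n)}$ is at least $n|\ell|$.

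The only uniform control you have is in the fiber variable: the maps $\theta^{(n)}_s$ are uniformly bi-Lipschitz in $x$. This only gives that along subsequences with $s+n_k\alpha\to s'$ one has $\mu_{s+n_k\alpha}\to\Theta_*\mu_s$ for some Arzel\`a--Ascoli limit $\Theta$. Nothing forces different subsequential limits to agree with each other, or with $\mu_{s'}$. The tools that do yield continuous trivializations of this kind (structure theory of isometric extensions) typically assume minimality, so this route is close to circular.

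Without a continuous $\beta$, Step 3 has nothing to act on. A measurable $\beta$ has no degree, and a measurable conjugacy to a skew product says nothing about topological minimality. Even for continuous $\beta$, excluding \emph{measurable} solutions of the cohomological equation via Fourier coefficients needs regularity such as absolute continuity. For minimality you should instead exclude \emph{continuous} solutions of $\psi\circ R_\alpha-\psi=k\beta$, which the degree argument does. Your fallback paragraph is not an argument, and the twist condition is not the mechanism at work.

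\textbf{What you are missing.} The end of your Step 1 already closes the proof, via distality rather than a trivialization.
\begin{itemize}
\item Take $s$ in a full-measure $R_\alpha$-invariant set and any arc $I\subset\{s\}\times\mathbf{T}^1$. The identity $\mu_{s+n\alpha}(\theta^{(n)}_s(I))=\mu_s(I)$, with densities in $[d,D]$, gives $|\theta^{(n)}_s(I)|\geq (d/D)|I|$.
\item Apply this to both arcs between two points $x_1,x_2$. Then pass from a.e.\ $s$ to every $s$ using continuity of $(s,x)\mapsto\theta^{(n)}_s(x)$. You get $d_{\mathbf{T}^1}(\theta^{(n)}_s(x_1),\theta^{(n)}_s(x_2))\geq (d/D)\,d_{\mathbf{T}^1}(x_1,x_2)$ for all $n\in\mathbb{Z}$.
\item Points on distinct leaves keep their base distance under $R_\alpha$.
\end{itemize}
So $F$ is distal. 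This is exactly Lemmas~\ref{lemMSC01} and~\ref{lemPTC03}; the latter phrases the same area comparison using thin rectangles.

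Distality alone only makes orbit closures minimal, so you also need transitivity. The paper gets it from Proposition~\ref{transitivao}, using area preservation and the absence of periodic points. Auslander's Lemma~\ref{lemDPH01} then gives minimality, with no continuous trivialization or Furstenberg-type argument needed.
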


Under the hypotheses of Theorem \ref{thm:A}, after a suitable change of coordinates by a homotopic to the identity homeomorphism of the torus $\Phi,$ the map $f$ takes the form
\begin{equation}\label{equaI05}
    f^*(x,y)=(x+\ell y +\varphi(x,y)\bmod 1, y+\alpha\bmod 1),\quad (x,y)\in\mathbf{T}^2,
\end{equation}
where $\alpha\notin\mathbb{Q}$ is the single rotation number in the vertical rotation set of $\widehat{f}$, and $\varphi$ is induced by a $\mathbb{Z}^2$-periodic function on $\mathbb{R}^2$. Such a map might not be differentiable anymore, but it satisfies the topological twist condition. This clearly implies that the map induced by $f$ on the space of leaves of the foliation is always conjugate to the irrational rotation $R_{\alpha}$. The main thing in the above theorem is that we assume that this conjugacy map is bi-Lipschitz. 

In the coordinates of (\ref{equaI05}), $f$ preserves a measure of the form $\mu(A)=Leb(\Phi^{-1}(A))$ (called {\it homeomorphic to Lebesgue}). Under the hypotheses of Theorem \ref{thm:C}, $\Phi$ is bi-Lipschitz.

\vskip0.1truecm

{\it Proof of the Classification Result from the Introduction.} For any given area-preserving $\widehat{f}\in\mathrm{Diff}_{\mathrm{tw}}^1(\mathbf{T}^1\times \mathbb{R})$, if 
$\rho_V(\widehat{f})$ has interior, all rationals 
$p/q$ in its interior are realized by $f$-periodic orbits, see Theorem 6 of \cite{non04}. And when $\rho_V(\widehat{f})=\{p/q\},$ the map $\widehat{f}^q-(0,p)$ satisfies the curve intersection property, and so Theorem 3 of \cite{non04} gives the desired periodic orbits. Therefore, if $f$ has no periodic points, $\rho_V(\widehat{f})$ is a single irrational number. The rest follows from Theorems \ref{thm:A} and \ref{thm:C}. $\Box$

\vskip0.1truecm

    For $f$ defined as in~\eqref{equaI05}, an argument analogous to the one used in the proof of Proposition~4.2 of \cite{beguin2009denjoy} shows that $f$ admits a unique minimal set $\mathcal{M}$. If $\mathcal{M}\neq\mathbf{T}^2$, then Theorem~4 of \cite{jaeger2013classification} implies that $\mathcal{M}$ is an extension of a Cantor set. In fact, in our setting, every proper closed $f$-invariant set $K\subset\mathbf{T}^2$ is an extension of a Cantor set. Furthermore, by Theorem~E of \cite{kolyada2014minimal} one of the following alternatives holds: either a generic horizontal fibre of $\mathcal{M}$ is a Cantor set, or there exists an integer $N\geq 1$ such that a generic horizontal fibre of $\mathcal{M}$ has cardinality $N$.

Therefore, the remaining natural question is: Are there area-preserving twist maps of the torus with a single irrational number as a rotation set, which are not minimal? We do not know the answer. What we know is that the ideas behind the construction of certain examples in \cite{beguin2009denjoy} can be adapted to our setting in order to produce homeomorphisms of the form 
$$
 f(x,y)=(x+\ell y +\varphi(x,y)\bmod 1, y+\alpha\bmod 1),\quad (x,y)\in\mathbf{T}^2
$$
which are transitive, but not minimal. We do not know whether they satisfy the topological twist condition, and whether they preserve a measure homeomorphic to Lebesgue.

This paper is organized as follows. In Section 2 we present some results we use and introduce important concepts and definitions. In Section 3, we prove our main theorems. In particular, subsection 3.1.2 is the heart of the paper. There, for a given circloid $\widehat{C}$ in the vertical cylinder, and a sequence of circloids $\widehat{C}_n,$ in a certain sense converging from above to $\widehat{C},$ we study how left and right components below $\widehat{C}_n$ behave as $n$ increases, with respect to the left and right components below $\widehat{C}$. Although there is no convergence in the Hausdorff sense, there is convergence to something else. This property is at the heart of the proof of Theorem \ref{thm:A}. 

\section{Preliminaries and additional results}

\subsection{The semi-conjugacy}


Initially, we recall the notion of bounded deviations. Let $\widehat{f}\in\mathrm{Diff}_{\mathrm{tw}}^1(\mathbf{T}^1 \times \mathbb{R})$ and assume that its vertical rotation set satisfies $\rho_V(\widehat{f})=\{\alpha\}$. Let $f$ be the torus map lifted by $\widehat{f}.$ We say that $f$ (or $\widehat{f}$) has \emph{bounded deviations} if there exists a constant $c>0$ such that for all 
$\widehat{z}\in\mathbf{T}^1\times\mathbb{R}$ and all $n\in\mathbb{Z}$,
\begin{equation}\label{eqSC02}
    \left|p_2\circ\widehat{f}^n(\widehat{z})-p_2(\widehat{z})-n\alpha\right|\leq c.
\end{equation}
In other words, the vertical coordinate of the orbit of any point under $\widehat{f}$ follows the vertical translation by $\alpha$, up to a uniformly bounded error.

The next result is analogous to Theorem C (i) in \cite{jager2009linearization}, see also Theorem 1.1 in \cite{jager2017irrational}. 

\begin{theorem}\label{teoSC01}
    Let $\widehat{f}\in \mathrm{Diff}_{\mathrm{tw}}^1(\mathbf{T}^1 \times \mathbb{R})$ be area-preserving satisfying $\rho_{V}(\widehat{f})=\{\alpha\}$, with $\alpha \notin \mathbb{Q}$. Suppose that 
$\widehat{f}$ has bounded deviations. Then $f$, the torus map lifted by $\widehat{f},$ is semi-conjugate to an irrational rotation $R_{\alpha}$ on $\mathbf{T}^1$.
\end{theorem}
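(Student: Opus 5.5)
We follow the strategy of Jäger's linearization theorem (Theorem C(i) in \cite{jager2009linearization}), adapted to the vertical direction in the cylinder. The plan is to build a continuous function $\widehat{h}:\mathbf{T}^1\times\mathbb{R}\to\mathbb{R}$ and check three things:
\begin{itemize}
\item $\widehat{h}(\widehat{x},\widehat{y}+1)=\widehat{h}(\widehat{x},\widehat{y})+1$;
\item $\widehat{h}\circ\widehat{f}=\widehat{h}+\alpha$;
\item $\widehat{h}-p_2$ is bounded.
\end{itemize}
Then $\widehat{h}$ descends to a continuous map $h:\mathbf{T}^2\to\mathbf{T}^1$ with $h\circ f=R_\alpha\circ h$. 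Since $\widehat{h}-p_2$ is bounded and $\widehat{h}$ has degree one in the vertical direction, $h$ is homotopically nontrivial and hence onto. This gives the semi-conjugacy.

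To construct $\widehat{h}$, consider for each $\widehat{z}$ the set $\mathcal{O}^-(\widehat{z})$ of points lying "below the orbit". Concretely, define
$$
\widehat{h}(\widehat{z}) := \sup_{n\in\mathbb{Z}}\bigl(p_2\circ\widehat{f}^n(\widehat{z})-n\alpha\bigr).
$$
Bounded deviations \eqref{eqSC02} makes this finite, with $|\widehat{h}-p_2|\le c$.

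The equivariance $\widehat{h}\circ\widehat{f}=\widehat{h}+\alpha$ comes from reindexing the supremum. Periodicity in $\widehat{y}$ holds because $\widehat{f}$ commutes with the deck translation $(0,1)$.

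The problem is continuity: a supremum of continuous functions is only lower semicontinuous. So we replace this naive formula by the standard "region-above-the-orbit" construction. For $\widehat{z}$, let $U(\widehat{z})$ be the union of the sets $\widehat{f}^{-n}\bigl(\{\widehat{y}> p_2\widehat{f}^n(\widehat{z})+\text{const}\}\bigr)-(0,-n\alpha)$, made precise using the rigid lifts $\widehat{f}^n-(0,m)$ and ordering by $n\alpha-m$. We then show these essential invariant regions are nested and ordered by a real parameter, exactly as in Jäger's proof. Both area preservation and irrationality of $\alpha$ are used here. Irrationality makes $\{n\alpha-m\}$ dense, so the family of invariant essential annular "levels" $\widehat{A}_t:=\text{boundary of }\bigcup_{n\alpha-m<t}(\widehat{f}^n-(0,m))(\widehat{A}_0)$ is indexed by $t\in\mathbb{R}$ and strictly monotone. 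Here $\widehat{A}_0$ is a fixed essential closed region bounded above, e.g. $\{\widehat{y}\le 0\}$ filled under the iterates. Bounded deviations ensures each $\widehat{A}_t$ is a proper, bounded-height essential set.

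With this family in hand, we define $\widehat{h}(\widehat{z}):=\inf\{t:\widehat{z}\in\widehat{A}_t\}$. This immediately gives equivariance and the degree property, and boundedness follows from bounded deviations.

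The main obstacle is continuity of $\widehat{h}$, i.e. that there are no "gaps" (jumps) between $\bigcup_{s<t}\widehat{A}_s$ and $\bigcap_{s>t}\widehat{A}_s$. We would argue by contradiction. If $D_t:=\operatorname{Int}\bigl(\bigcap_{s>t}\widehat{A}_s\setminus\overline{\bigcup_{s<t}\widehat{A}_s}\bigr)$ is nonempty, then its translates $\widehat{f}^n(D_t)-(0,m)$ equal $D_{t+n\alpha-m}$ and are pairwise disjoint, because distinct parameters give disjoint gaps by monotonicity. They project to the torus injectively, again because the parameters differ. This yields infinitely many pairwise disjoint open sets in $\mathbf{T}^2$ of equal positive Lebesgue measure, which contradicts area preservation. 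Upper and lower semicontinuity then combine to give continuity of $\widehat{h}$, and this completes the argument.
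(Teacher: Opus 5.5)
There is a genuine gap at the step you yourself flag as the main obstacle: the continuity of $\widehat h$. Your area argument rules out the wrong phenomenon. Write $W_t$ for your increasing, left-continuous family of regions. Take them open, for instance generated by $\{\widehat y<0\}$; with the closed set $\{\widehat y\le 0\}$ not even upper semicontinuity is automatic. Then $\{\widehat h<t\}=\bigcup_{s<t}W_s$ and $\{\widehat h\le t\}=\bigcap_{s>t}W_s$.

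Your set $D_t$ is contained in $\widehat h^{-1}(t)$. So $D_t\neq\emptyset$ means that $\widehat h\equiv t$ on an open set. That is a plateau, which is harmless for continuity. The wandering argument you give excludes plateaus; it is essentially the argument of Proposition \ref{areazero} showing that level sets are null.

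A discontinuity of $\widehat h$ is something different: a point $\widehat z\in\overline{W_s}\setminus W_{s'}$ with $s<s'$. For such a point, $\widehat z\in\partial W_u$ for every $u\in[s,s']$, so the boundaries of a whole interval of levels touch at $\widehat z$. Touching of this kind has no interior and cannot be detected by any measure argument. Hence ``upper and lower semicontinuity then combine'' does not follow; lower semicontinuity is exactly what is missing.

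For open, left-continuous $W_t$, continuity of $\widehat h$ is equivalent to the strict nesting $\overline{W_s}\subset W_{s'}$ whenever $s<s'$. This is precisely the property you defer with ``strictly monotone \ldots exactly as in Jäger's proof'', and for your family it is not automatic. Bounded deviations only place each curve $T^{-m}\widehat f^{\,n}(\mathbf{T}^1\times\{0\})$ in a band of height $2c$ around $n\alpha-m$. Curves with nearby parameters therefore cross, and the upper envelopes bounding your $W_t$ can touch.

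The paper avoids this by working with circloids $\widehat C_r\subset A_r$ and importing from Jäger the disjoint property \eqref{eqSC08}, $\widehat C_s\prec\widehat C_r$ for $s<r$. Given that property, $\{\widehat h>t\}=\bigcup_{r>t}\mathcal{U}^+(\widehat C_r)$ and $\{\widehat h<t\}=\bigcup_{s<t}\mathcal{U}^-(\widehat C_s)$ are both open, and continuity is immediate.

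To close the gap you have two options. One is to switch to a circloid family and prove or cite its strict ordering; that ordering is where the real work, and the use of the hypotheses, lies. The other is to prove $\overline{W_s}\subset W_{s'}$ directly for your sets.

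You should also fix the inconsistency between $\widehat A_t$ defined as a boundary and $\widehat A_t$ used as a sublevel region in $\inf\{t:\widehat z\in\widehat A_t\}$.
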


The construction of the semi-conjugacy on $\mathbf{T}^2$ is first carried out on the cylinder $\mathbf{T}^1\times \mathbb{R}$. It relies crucially on a family of pairwise disjoint circloids in $\mathbf{T}^1\times \mathbb{R}$. We now revisit the construction of circloids as in \cite{jager2009linearization}.

A circloid $\widehat{C}$ in $\mathbf{T}^1\times \mathbb{R}$ is a continuum satisfying the following properties: 

\begin{enumerate}
\item  it is essential; 

\item its complement has exactly two (open) connected components, one above $\widehat{C}$, denoted $\mathcal{U}^{+}(\widehat{C})$ and the other one below $\widehat{C}$, denoted $\mathcal{U}^{-}(\widehat{C})$; 

\item $\widehat{C}$ is minimal with respect to the inclusion satisfying properties 1 and 2;

\end{enumerate}

Let $\widehat{f}\in\mathrm{Diff}_{\mathrm{tw}}^1(\mathbf{T}^1\times\mathbb{R})$ be a lift of the area-preserving $f\in\mathrm{Diff}_{\mathrm{tw}}^1(\mathbf{T}^2)$ such that $\rho_V(\widehat{f})=\{\alpha\}$, for some $\alpha\notin\mathbb{Q}$. For each $r\in\mathbb{R}$, define
\begin{equation}\label{eqSC03}
    A_r :=\overline{\bigcup_{n\in \mathbb{Z}}\widehat{f}^n\left(\mathbf{T}^1\times \{r-n\alpha\}\right)}.
\end{equation}
By~\eqref{eqSC02}, we have $A_r\subseteq \mathbf{T}^1\times[r-c,r+c]$. Moreover, $A_r$ is compact and essential. Thus, by Lemma~3.2 of \cite{jager2009linearization}, there exists a circloid 
\begin{equation}\label{eqSC04}
    \widehat{C}_r\subset A_r. 
\end{equation}

Let $r,s\in\mathbb{R}$. From \cite{jager2009linearization} and~\cite{jager2017irrational}, the following properties hold:
\begin{equation}\label{eqSC05}
    T(\widehat{C}_r)=\widehat{C}_{r+1}, \ \text{where} \ T:\mathbf{T}^1\times\mathbb{R}\to\mathbf{T}^1\times\mathbb{R} \ \text{is given by} \ T(\widehat{x},\widehat{y}):=(\widehat{x},\widehat{y}+1).
\end{equation}

\begin{equation}\label{eqSC06}
    \widehat{f}(\widehat{C}_r)=\widehat{C}_{r+\alpha}.
\end{equation}

\begin{equation}\label{eqSC07}
    \text{If} \ s\leq r \ \text{then} \ \widehat{C}_s\preceq \widehat{C}_r  \Leftrightarrow \widehat{C}_r\subseteq \mathcal{U}^+(\widehat{C}_s)\cup \widehat{C}_s.
\end{equation}

\begin{equation}\label{eqSC08}
    \text{Disjoint property: if} \ s<r \ \text{then} \ \widehat{C}_s\prec \widehat{C}_r  \Leftrightarrow \widehat{C}_r\subset \mathcal{U}^+(\widehat{C}_s).
\end{equation}
 
Further, we write $]\widehat{C}_s,\widehat{C}_r[:= \mathcal{U}^+(\widehat{C}_s)\cap \mathcal{U}^-(\widehat{C}_r)$ and $ [\widehat{C}_s,\widehat{C}_r]:=]\widehat{C}_s,\widehat{C}_r[\cup \widehat{C}_r\cup\widehat{C}_s$, respectively, the open and closed regions strictly between $\widehat{C}_s$ and $\widehat{C}_r$.

Define the map $\widehat{h}:\mathbf{T}^1\times\mathbb{R}\to\mathbb{R}$ by
\begin{equation}\label{eqSC09}
    \widehat{h}(\widehat{z}):=\sup\left\{r\in \mathbb{R} : \widehat{z}\in \mathcal{U}^+(\widehat{C}_r)\right\},\quad \forall \ \widehat{z}\in \mathbf{T}^1\times\mathbb{R}.
\end{equation}
The map $\widehat{h}$ is well defined and satisfies the following properties:
\begin{equation}\label{eqSC10}
    \widehat{h}\circ \widehat{f}(\widehat{z})=\widehat{h}(\widehat{z})+\alpha\quad\text{and}\quad \widehat{h}\circ T(\widehat{z})=\widehat{h}(\widehat{z})+1,\quad \forall \ \widehat{z}\in\mathbf{T}^1\times\mathbb{R}.
\end{equation}
For each $r\in\mathbb{R}$, we have
\begin{equation}\label{eqSC11}
    \widehat{C}_r\subseteq \widehat{h}^{-1}(r).
\end{equation}

The continuity of $\widehat{h}$ follows from~\eqref{eqSC08}. Hence, by~\eqref{eqSC10}, $\widehat{h}$ projects to a semi-conjugacy $h$ between $f$ and $R_{\alpha}$.

\subsection{Fathi's techniques}

The ideas below appeared in a paper by Albert Fathi on Birkhoff's Invariant Curve Theorem \cite{fathi1983}. They play a fundamental role in the proof that the above circloids are actually graphs.

Initially, consider an open set $U\subseteq \mathbf{T}^1\times \mathbb{R}$ satisfying the following properties:
\begin{equation}\label{FTU1}
    \begin{array}{llll}
        \mathrm{(i)} & \ & U \ \text{is homeomorphic to} \ \mathbf{T}^1\times \mathbb{R};\\
        \mathrm{(ii)} &\ & \text{For numbers } a,b\in \mathbb{R} \text{ with } a<b, \ \mathbf{T}^1\times ]-\infty, a]\subset U \subset \mathbf{T}^1\times ]-\infty,b]; \\
        \mathrm{(iii)} & \ & \mathrm{Int}_{\mathbf{T}^1\times\mathbb{R}}(\mathrm{Cl}_{\mathbf{T}^1\times\mathbb{R}}U)=U.
    \end{array}
\end{equation} 

First, we observe that $U$, its closure $\mathrm{Cl}_{\mathbf{T}^1\times\mathbb{R}}U$, and its boundary $\partial_{\mathbf{T}^1\times\mathbb{R}} U$ are all connected sets. Moreover, since $U$ is open, it is also path-connected. Now, define
\begin{equation}\label{FTU2}
    V(U):=\left\{(\widehat{x},\widehat{y})\in U : (\widehat{x},s)\in U \ \text{for all} \ s\leq \widehat{y}\right\}.
\end{equation}
That is, $V(U)$ consists of those points in $U$ that can be reached from the horizontal curve $\mathbf{T}^1\times\{a\}$ (which is contained in $U$), by a vertical segment entirely contained in $U$. Observe that $V(U)$ is open, connected, hence path-connected, and it contains $\mathbf{T}^1\times]-\infty,a].$

By Lemmas 1–6 of~\cite{fathi1983}, the following results hold:
\begin{equation}\label{FTU3}
    \partial_UV(U)=\bigcup_{i\in I}S_i,
\end{equation}
where $I\subseteq \mathbb{N}$ and the sets $S_i$ are pairwise disjoint. Each $S_i$ is a vertical open segment of the form
$$S_i=\{\widehat{x}_i\}\times ]\widehat{y}_{i,1},\widehat{y}_{i,2}[,$$
where $a<\widehat{y}_{i,1}<\widehat{y}_{i,2}<b$ and $(\widehat{x}_i,\widehat{y}_{i,1}), (\widehat{x}_i,\widehat{y}_{i,2})\in \partial_{\mathbf{T}^1\times\mathbb{R}}U$ for all $i\in I\subseteq \mathbb{N}$. Moreover, each segment $S_i$ locally separates $U$: for any point $\widehat{w} \in S_i$ and any sufficiently small neighborhood $W$ of $\widehat{w}$, the set $W\cap {S_i}^c$ is the union of two disjoint open sets, $W_1$ and $W_2$, such that
\begin{equation}\label{FTU4}
    W_1\subset V(U)\quad\text{and}\quad W_2\subset U\setminus V(U).
\end{equation}
Hence, each set $S_i$ separates $U$ into exactly two connected components, one homeomorphic to $\mathbf{T}^1\times\mathbb{R}$ and the other one, homeomorphic to $\mathbb{R}^2$, is denoted $U_i$.

The sets $U_i$ satisfy
\begin{equation}\label{FTU5}
    \mathrm{Cl}_UU_i  = U_i\cup S_i\quad\text{and}\quad V(U)\cap\mathrm{Cl}_UU_i=\emptyset.
\end{equation}
Moreover,  $U_i$ and $\mathrm{Cl}_UU_i$ are the connected components of $U\setminus \mathrm{Cl}_UV(U)$ and  $U\setminus V(U)$, respectively. 

Now, let
\begin{equation}\label{FTU6}
    \{R_j(U)\}_{j\in J\subset I}:=\{U_i \text{ as above}: U_i \ \text{is to the right of} \ S_i\},
\end{equation}
and
\begin{equation}\label{FTU7}
    \{L_k(U)\}_{k\in K\subset I}:=\{U_i \text{ as above}: U_i \ \text{is to the left of} \ S_i\}.
\end{equation}

Also, we define
\begin{equation}\label{FTU8}
    R(U):=\bigcup_{j\in J}R_j(U)\quad\text{and}\quad L(U):=\bigcup_{k\in K}L_k(U).
\end{equation}
And the above sets satisfy
\begin{equation}\label{FTU9}
    \mathrm{Cl}_UR(U)=\bigcup_{j\in J}\mathrm{Cl}_UR_j(U)\quad\text{and}\quad \mathrm{Cl}_UL(U)=\bigcup_{k\in K}\mathrm{Cl}_UL_k(U).
\end{equation}
Finally, we conclude that
\begin{equation}\label{FTU10}
    U=V(U)\cup \mathrm{Cl}_UL(U)\cup \mathrm{Cl}_UR(U).
\end{equation}

Let $\widehat{f}\in \mathrm{Diff}_{\mathrm{tw}}^1(\mathbf{T}^1\times \mathbb{R})$ be a lift of 
$f\in \mathrm{Diff}_{\mathrm{tw}}^1(\mathbf{T}^2)$, and let 
$U\subset \mathbf{T}^1\times \mathbb{R}$ be an open set satisfying the conditions in \eqref{FTU1}. In the present setting, the set $U$ is not invariant under $\widehat{f}$. 

We claim that $\widehat{f}(U)$ still satisfies properties (i)--(iii) of~\eqref{FTU1}. Indeed, it is easy to check that  $\widehat{f}(U)$ satisfies properties (i) and (iii) of \eqref{FTU1}. Define $\phi:\mathbf{T}^2\to \mathbb{R}$ by
$$\phi(z):=p_2\circ\widehat{f}(\widehat{z})-p_2(\widehat{z}),\quad \forall \ \widehat{z}\in\tau^{-1}(z).$$
The function $\phi$ is continuous and well defined; therefore, there exists a constant $Max_{disp}>0$ such that $|\phi(z)|\leq Max_{disp}$ for all $z\in\mathbf{T}^2$. 
So, from $\mathbf{T}^1\times ]-\infty,a]\subset U\subset\mathbf{T}^1\times ]-\infty,b]$ we deduce that

$$
\mathbf{T}^1\times]-\infty,a-Max_{disp}]\subset\widehat{f}(U)\subset \mathbf{T}^1\times]-\infty,b+Max_{disp}].
$$
Therefore, property (ii) holds for $\widehat{f}(U)$. We conclude that $\widehat{f}(U)$ satisfies all the properties in~\eqref{FTU1}. In particular, decomposition~\eqref{FTU10} remains valid for $
\widehat{f}(U)$, that is,
$$
\widehat{f}(U)=V(\widehat{f}(U))\cup\mathrm{Cl}_{\widehat{f}(U)}L(\widehat{f}(U))\cup\mathrm{Cl}_{\widehat{f}(U)}R(\widehat{f}(U)).
$$

The same argument applies to $\widehat{f}^{-1}(U)$, and hence \eqref{FTU10} also holds for $\widehat{f}^{-1}(U)$. We therefore obtain the following lemma, whose proof follows the same lines as the proofs of Lemmas 7–10 in~\cite{fathi1983}.

\begin{lemma}\label{lemFTU01}
    Let $\widehat{f}\in\mathrm{Diff}_{\mathrm{tw}}^1(\mathbf{T}^1\times\mathbb{R})$ be a lift of a right twist map  $f\in\mathrm{Diff}_{\mathrm{tw}}^1(\mathbf{T}^2)$ and let 
$U\subset \mathbf{T}^1\times \mathbb{R}$ be an open set satisfying the conditions in \eqref{FTU1}. Then the following consequences  hold.
    \begin{enumerate}
        \item[$(1)$] $\widehat{f}(V(U))\cap \mathrm{Cl}_{\widehat{f}(U)}L(\widehat{f}(U))=\emptyset$;
        \item[$(2)$] $\widehat{f}^{-1}(V(U))\cap \mathrm{Cl}_{\widehat{f}^{-1}(U)}R(\widehat{f}^{-1}(U))=\emptyset$;
        \item[$(3)$] $\widehat{f}(\mathrm{Cl}_UR(U))\subset R(\widehat{f}(U))$;
        \item[$(4)$] $\widehat{f}^{-1}(\mathrm{Cl}_UL(U))\subset L(\widehat{f}^{-1}(U))$. 
    \end{enumerate}
\end{lemma}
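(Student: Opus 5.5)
The plan is to follow Fathi's arguments (Lemmas 7--10 of \cite{fathi1983}), using only two features of $\widehat{f}$: it is an orientation-preserving homeomorphism of the cylinder mapping $U$ onto $\widehat{f}(U)$, and it satisfies the right twist condition \eqref{equaI02}. The twist condition says that the image of a vertical segment $\{\widehat{x}\}\times[y_1,y_2]$ is a graph over the horizontal coordinate moving strictly to the right as $\widehat{y}$ increases, while the image under $\widehat{f}^{-1}$ moves strictly to the left. Items (2) and (4) are the mirror images of (1) and (3): apply the same arguments to $\widehat{f}^{-1}$, a left twist, with left and right interchanged. So I concentrate on (1) and (3).

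For (1), I take $\widehat{z}\in V(U)$ and suppose $\widehat{f}(\widehat{z})\in \mathrm{Cl}_{\widehat{f}(U)}L_k(\widehat{f}(U))$ for some $k$.
\begin{itemize}
\item By definition of $V(U)$, the vertical ray $\gamma=\{\widehat{x}\}\times]-\infty,\widehat{y}]$ lies in $U$.
\item Hence $\widehat{f}(\gamma)\subset\widehat{f}(U)$ is a curve going down to $-\infty$, so it starts inside $V(\widehat{f}(U))$, and it ends at a point of $\mathrm{Cl}\,L_k$.
\item By \eqref{FTU5} and the local separation property \eqref{FTU4}, $\widehat{f}(\gamma)$ must cross the segment $S_k$ from the $V$ side into $L_k$. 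Since $L_k$ lies to the left of $S_k$, at the first crossing point the curve passes from right to left.
\item The twist condition says $\widehat{f}(\gamma)$, parametrized by increasing $\widehat{y}$, has strictly increasing lifted $\widehat{x}$-coordinate. So it can only cross the vertical segment $S_k$ from left to right, which is a contradiction.
\end{itemize}
Two delicate points need care. One is the case where the curve touches $S_k$ without entering $L_k$, or ends exactly on $S_k$. The other is that "left of $S_k$" must be made precise in the lift to $\mathbb{R}^2$. In $\mathbb{R}^2$, $U_i$ is a topological disc bounded by $S_i$ and part of $\partial U$, and it lies locally on one side of $S_i$.

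For (3), I take $\widehat{z}\in\mathrm{Cl}_U R_j(U)$. The plan is to show first that $\widehat{f}(\widehat{z})\notin V(\widehat{f}(U))$, and second that $\widehat{f}(\widehat{z})\notin\mathrm{Cl}\,L(\widehat{f}(U))$. By \eqref{FTU10} this places $\widehat{f}(\widehat{z})$ in $\mathrm{Cl}\,R(\widehat{f}(U))$, and a further small argument upgrades this to the open set $R(\widehat{f}(U))$.

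For the first step, suppose $\widehat{f}(\widehat{z})\in V(\widehat{f}(U))$.
\begin{itemize}
\item Then the downward vertical ray from $\widehat{f}(\widehat{z})$ lies in $\widehat{f}(U)$, so its preimage under $\widehat{f}^{-1}$ is a curve in $U$ from $\widehat{z}$ down to $-\infty$.
\item By the left twist property of $\widehat{f}^{-1}$, the lifted $\widehat{x}$-coordinate of this curve increases as one goes downward.
\item To escape $\mathrm{Cl}\,R_j(U)$ into $V(U)$, the curve must cross $S_j$ going from right to left.
\item Going downward with increasing $\widehat{x}$ means moving rightward, which contradicts the required crossing direction.
\end{itemize}
The second step, excluding $\mathrm{Cl}\,L(\widehat{f}(U))$, combines (1)-type reasoning with a connectedness argument. $R_j(U)$ is connected and its closure meets $S_j$, whose points are adjacent to $V(U)$. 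By (1), $\widehat{f}$ of those adjacent points avoids $\mathrm{Cl}\,L$. Since $\widehat{f}(\mathrm{Cl}\,R_j)$ also avoids $V(\widehat{f}(U))$ by the first step, it lies in the union of the pairwise disjoint components $\mathrm{Cl}\,L_k$, $\mathrm{Cl}\,R_{j'}$. By connectedness it lies in a single one, and it must be an $R$ one. For the upgrade to the open set, any point of $\widehat{f}(\mathrm{Cl}\,R_j)$ lying on some $S_{j'}$ would have nearby points in $V(\widehat{f}(U))$ that are images of points of $R_j$, contradicting the first step. This is because $\widehat{f}(\mathrm{Cl}_U R_j)$ is a neighbourhood, relative to $\widehat{f}(U)$, of the images of interior points.

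The main obstacle I expect is the rigorous crossing argument. A vertical segment $S_i$ and a curve that is a graph with monotone $\widehat{x}$ must cross in a definite direction, and $U_i$ must lie entirely on the predicted side. Working in the universal cover $\mathbb{R}^2$ with Fathi's local separation \eqref{FTU4}, I would handle the possibility of tangential contact using the strict inequality $k_{tw}>0$, which makes the images of verticals transverse to verticals.
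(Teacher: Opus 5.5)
Your overall route is the one the paper intends. The paper gives no proof of its own and defers to Fathi's Lemmas 7--10. Items (1) and (2) are handled correctly, and so is the first half of (3), namely that $\widehat{f}(\mathrm{Cl}_U R(U))$ misses $V(\widehat{f}(U))$; this is just (2) applied to $\widehat{f}(U)$. Taking the first parameter at which the image ray enters $L_k\cup S_k$ also disposes of your tangency worry.

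\textbf{The gap is in the rest of (3).} Your upgrade argument only works at points of $R_j(U)$. For $p\in S_j$, the set $\widehat{f}(\mathrm{Cl}_U R_j)$ is not a neighbourhood of $\widehat{f}(p)$. The points of $V(\widehat{f}(U))$ near $\widehat{f}(p)$ may therefore be images of points of $V(U)$ lying on the other side of $S_j$, rather than images of points of $R_j$ as you claim. Nothing in your justification uses the twist at this stage, and without it the conclusion is false. For the identity map, your first step, item (1), connectedness and openness of images all hold. Yet $\mathrm{Cl}_U R_j\not\subset R(U)$, since $S_j\subset \mathrm{Cl}_U V(U)$.

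The same defect affects ``it must be an $R$ one''. Knowing that images of points of $V(U)$ avoid $\mathrm{Cl}\,L(\widehat{f}(U))$ does not exclude $\widehat{f}(\mathrm{Cl}_U R_j)\subset L_k\cup S'_k$ with $\widehat{f}(S_j)\subset S'_k$. Since the images of the gates $S_j$ are exactly what Lemma~\ref{lemTC00} exploits, this is the heart of (3), not a technicality.

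\textbf{The missing idea is a third use of the twist condition, applied to $S_j$ itself.}
\begin{enumerate}
\item Given $p\in S_j$, points of $S_j$ slightly below $p$ are sent close to $\widehat{f}(p)$ and strictly to its left. Points slightly above $p$ are sent strictly to its right. All of them belong to $\widehat{f}(\mathrm{Cl}_U R_j)$.
\item Suppose $\widehat{f}(p)$ lay on a gate $S'$ of $\widehat{f}(U)$. By \eqref{FTU4}, one full side of $S'$ near $\widehat{f}(p)$ is contained in $V(\widehat{f}(U))$: the left side if the adjacent component is a right one, the right side otherwise. So one of these images would lie in $V(\widehat{f}(U))$, contradicting your first step.
\item Hence $\widehat{f}(\mathrm{Cl}_U R_j)$ avoids all gates. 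It therefore lies in $\widehat{f}(U)\setminus \mathrm{Cl}\,V(\widehat{f}(U))$, and by connectedness in a single open component $U'_i$.
\item Since $U'_i$ is an open neighbourhood of $\widehat{f}(S_j)$ and $S_j\subset\mathrm{Cl}_U V(U)$, some point of $V(U)$ is mapped into $U'_i$. Item (1) then forbids $U'_i$ from being a left component.
\end{enumerate}
This gives (3), and (4) follows symmetrically with $\widehat{f}^{-1}$.
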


\subsection{Le Calvez's theory}

Here we collect several topological results due to Le Calvez, taken from \cite{le1986existence,le1991proprietes}. Let $\widehat{f}\in\mathrm{Diff}_{\mathrm{tw}}^1(\mathbf{T}^1\times\mathbb{R})$ and $\widetilde{f}\in\mathrm{Diff}_{\mathrm{tw}}^1(\mathbb{R}^2)$ be a lift of $\widehat{f}$. For every $s\in \mathbb{Z}$ and every positive integer $q>0$, we define 
$$\widetilde{K}(s,q):=\left\{(\widetilde{x},\widetilde{y})\in \mathbb{R}^2 : p_1\circ \widetilde{f}^q(\widetilde{x},\widetilde{y}) = \widetilde{x} + s \right\} $$
and
$$K(s,q)=\pi \left(\widetilde{K}(s,q)\right).$$
We have the following important lemma.

\begin{lemma}\label{lemTRLC01}
    The set $K(s,q)$ is compact and admits a unique connected component $C(s,q)$ which separates the two ends of $\mathbf{T}^1\times \mathbb{R}$.
\end{lemma}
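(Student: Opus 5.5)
Set $g(\widetilde{x},\widetilde{y}):=p_1\circ\widetilde{f}^q(\widetilde{x},\widetilde{y})-\widetilde{x}-s$, so that $\widetilde{K}(s,q)=g^{-1}(0)$. The plan is to show that $g$ is continuous and $\mathbb{Z}$-periodic in $\widetilde{x}$, that it tends to $\pm\infty$ as $\widetilde{y}\to\pm\infty$ uniformly in $\widetilde{x}$, and then to extract the separating component by a plane-topology argument.

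\emph{Step 1: compactness.} Since $\widetilde{f}$ lifts a torus map homotopic to the Dehn twist, we have $\widetilde{f}(\widetilde{x}+1,\widetilde{y})=\widetilde{f}(\widetilde{x},\widetilde{y})+(1,0)$. Hence $g$ is $1$-periodic in $\widetilde{x}$ and $\widetilde{K}(s,q)$ is invariant under horizontal integer translation, so $K(s,q)$ is well defined and closed in $\mathbf{T}^1\times\mathbb{R}$. For boundedness, write $\widetilde{f}^q(\widetilde{x},\widetilde{y})=(\widetilde{x}+q\ell\widetilde{y}+\psi_q(\widetilde{x},\widetilde{y}),\ast)$. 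Here I use that the homotopy class of $\widetilde{f}^q$ is the $q$-th power of the twist modulo a bounded correction. More precisely, $\widetilde{f}(\widetilde{x},\widetilde{y}+1)=\widetilde{f}(\widetilde{x},\widetilde{y})+(\ell,1)$ gives $\widetilde{f}^q(\widetilde{x},\widetilde{y}+1)=\widetilde{f}^q(\widetilde{x},\widetilde{y})+(q\ell,1)$. Therefore $\psi_q$ is $\mathbb{Z}^2$-periodic, hence bounded. It follows that $g(\widetilde{x},\widetilde{y})=q\ell\widetilde{y}+\psi_q-s$, so $g\to\pm\infty$ (with sign given by $\ell$) as $\widetilde{y}\to\pm\infty$, uniformly in $\widetilde{x}$. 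Thus $K(s,q)\subset\mathbf{T}^1\times[-M,M]$ for some $M$, and it is compact.

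\emph{Step 2: existence of a separating component.} Put $\widehat{g}$ for the induced function on the cylinder. The sets $\{\widehat{g}>0\}$ and $\{\widehat{g}<0\}$ contain neighborhoods of the two ends, and $K(s,q)=\widehat{g}^{-1}(0)$ meets every curve joining the two ends, by the intermediate value theorem. So $K(s,q)$ separates the ends. To pass to a single component, I would use the standard fact that if a compact set in an annulus (compactify to a sphere $S^2$ with the two ends as points $N,S$) separates $N$ from $S$, then some connected component of it does. This is the Janiszewski-type result, or equivalently the statement that the quasicomponent/component containing a separator can be extracted via a decreasing family of clopen neighborhoods. Concretely, if no component separated, then each component $Q$ would have a clopen neighborhood in $K(s,q)$ whose complement in $S^2$ connects $N$ to $S$. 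Compactness then gives finitely many such neighborhoods covering $K(s,q)$. Iterating the Janiszewski theorem (a union of two disjoint closed sets, neither separating, does not separate) gives a contradiction.

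\emph{Step 3: uniqueness.} This is the main obstacle, and it is where the twist condition must enter, since a general level set could contain two nested essential components. Here I would exploit that $\widetilde{f}^q$ is not itself known to be twist, so monotonicity of $g$ in $\widetilde{y}$ is unavailable for $q>1$. Instead I would follow Le Calvez: argue by contradiction with two disjoint essential components $C_1\prec C_2$ of $K(s,q)$, consider the annular region $W$ between them, and the closed curve formed by $\mathbf{T}^1\times\{\cdot\}$ arcs. Then I would use the twist property of $\widetilde{f}$ iterated along the orbit. The images of a vertical segment under successive iterates tilt monotonically (each $\widetilde{f}$ maps verticals to graphs over $\widetilde{x}$ turning positively). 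This gives that on any vertical segment crossing $W$ the function $g$ "crosses zero with a definite orientation", yielding a sign contradiction between the bottom crossing of $C_1$ and the top crossing of $C_2$ given the boundary behaviour from Step 1. I expect to need to cite the corresponding lemma in \cite{le1986existence} for the precise monotonicity of iterated vertical images, rather than reprove it.
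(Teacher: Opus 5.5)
Your Steps 1 and 2 are correct. They also go beyond the paper, which gives no proof of this lemma and simply takes it from Le Calvez. In Step 2, make the finitely many clopen pieces disjoint before applying Janiszewski; this is harmless, since subsets of non-separating sets do not separate.

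\emph{The gap is Step 3}, the only part of the statement that actually uses the twist condition, and the mechanism you describe does not exist. For $q\ge 2$ the curve $\widetilde f^q(V_{\widetilde x})$ is in general not a graph. It may cross $V_{\widetilde x+s}$ from left to right, then right to left, then left to right again, so $g$ restricted to a vertical changes sign several times and in both directions. That is exactly why $K(s,q)$ can meet a vertical in several points, and why the paper needs $\mu^-\le\mu^+$ and $\nu^\pm$. Nothing forbids such back-and-forth crossings in the region between two separating components, so no sign contradiction is obtained. Note also that the boundary behaviour of Step 1 alone can never give uniqueness: $\widetilde y(\widetilde y^2-1)$ has that behaviour, and its zero set has three nested essential components.

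The missing twist-specific ingredient is Le Calvez's order-reversal property, which the paper states as Lemma \ref{lemTRLC04}: $\widehat f^q(\widehat x,\mu^-(\widehat x))=(\widehat x,\nu^+(\widehat x))$. In words, the first point where $\widetilde f^q(V_{\widetilde x})$ meets $V_{\widetilde x+s}$ is the highest one.
\begin{itemize}
\item For $q=2$ this is elementary. Order the points of $\widetilde f(V_{\widetilde x})\cap\widetilde f^{-1}(V_{\widetilde x+s})$ by first coordinate. Their preimages go up along $V_{\widetilde x}$ while their images go down along $V_{\widetilde x+s}$, because $\widetilde f$ twists to the right and $\widetilde f^{-1}$ to the left.
\item For general $q$ you need Le Calvez's result that $\widetilde f^q(V_{\widetilde x})$ is a negative curve \cite{le1991proprietes}.
\end{itemize}

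Granting this, uniqueness takes a few lines. Let $C$ be any component of $K(s,q)$ separating the ends, and let $\mathcal U^{\pm}(C)$ be the components of its complement containing the upper and lower ends.
\begin{itemize}
\item The open vertical ray below $(\widehat x,\mu^-(\widehat x))$ misses $K(s,q)$, hence lies in $\mathcal U^-(C)$. So $(\widehat x,\mu^-(\widehat x))\in\mathcal U^-(C)\cup C$.
\item The open ray above $(\widehat x,\nu^+(\widehat x))$ misses $\widehat f^q(K(s,q))\supset\widehat f^q(C)$. So $(\widehat x,\nu^+(\widehat x))\in\mathcal U^+(\widehat f^q(C))\cup\widehat f^q(C)$. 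Since $\widehat f^q$ preserves each end, this gives $(\widehat x,\mu^-(\widehat x))\in\mathcal U^+(C)\cup C$.
\end{itemize}
Thus $\mathrm{Graph}(\mu^-)\subset C$ for every separating component $C$. Two distinct, hence disjoint, separating components therefore cannot coexist.
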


%


Fix $s\in \mathbb{Z}$ and a positive integer $q$. For each $\widehat{x}\in\mathbf{T}^1$, we define the functions 
$$\mu^{-}(\widehat{x}):=\min\left\{p_2(\widehat{z}) : \widehat{z}\in K(s,q), \  \ p_1(\widehat{z})=\widehat{x}\right\},$$
and
$$\mu^{+}(\widehat{x}):=\max\left\{p_2(\widehat{z}) : \widehat{z}\in K(s,q), \ \ p_1(\widehat{z})=\widehat{x}\right\}.$$
Similarly, we associate to $\widehat{f}^q\left(K(s,q)\right)$ the functions
$$\nu^{-}(\widehat{x}):=\min\left\{p_2(\widehat{z}) : \widehat{z}\in \widehat{f}^q\left(K(s,q)\right), \ \  p_1(\widehat{z})=\widehat{x}\right\},$$
and
$$\nu^{+}(\widehat{x}):=\max\left\{p_2(\widehat{z}) : \widehat{z}\in \widehat{f}^q\left(K(s,q)\right), \ \ p_1(\widehat{z})=\widehat{x}\right\}.$$

The functions $\mu^{\pm}$ and $\nu^{\pm}$ enjoy the following properties, summarized in the next lemmas.

\begin{lemma}\label{lemTRLC03}
    Define $\mathrm{Graph}(\mu^{\pm}):=\bigl\{(\widehat{x},\mu^{\pm}(\widehat{x})) : \widehat{x}\in\mathbf{T}^1\bigr\}$. Then 
    $$\mathrm{Graph}(\mu^+)\cup\mathrm{Graph}(\mu^-)\subset C(s,q).$$
\end{lemma}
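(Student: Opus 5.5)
The plan is to show that the points $(\widehat{x},\mu^{\pm}(\widehat{x}))$ cannot be separated from the ends of the cylinder by any compact piece of $K(s,q)$ other than the essential one. First, $\mu^{\pm}$ are well defined. For each $\widehat{x}$ the vertical line $\{\widehat{x}\}\times\mathbb{R}$ meets $K(s,q)$, and this set is compact by Lemma~\ref{lemTRLC01}. To see that it is nonempty, lift to the plane and consider the function $\widetilde{y}\mapsto p_1\circ\widetilde{f}^q(\widetilde{x},\widetilde{y})-\widetilde{x}-s$. Iterating the twist condition shows that $\widetilde{f}^q$ maps the vertical line to a curve whose horizontal coordinate tends to $\pm\infty$ at the two ends, because the Dehn twist homotopy class forces $p_1\circ\widetilde f^q(\widetilde x,\widetilde y)\approx \widetilde x+q\ell\widetilde y+O(1)$. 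Hence the function has a zero. Since $C(s,q)$ separates the ends, it also meets every vertical line.

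The key step uses the separation of the two ends of the cylinder. Consider the open set $W^-:=\{(\widehat{x},\widehat{y}):\widehat{y}<\mu^-(\widehat{x})\}$, which is the set of points below $K(s,q)$ along vertical segments. Let $\mathcal{U}^-$ be the connected component of $(\mathbf{T}^1\times\mathbb{R})\setminus K(s,q)$ containing a neighbourhood of the lower end. Then $W^-\subset\mathcal{U}^-$, since every point of $W^-$ connects to $-\infty$ by a vertical ray avoiding $K(s,q)$. Consequently each point $(\widehat{x},\mu^-(\widehat{x}))$ lies in $\partial\,\mathcal{U}^-$, and likewise each point $(\widehat{x},\mu^+(\widehat{x}))$ lies in $\partial\,\mathcal{U}^+$. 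It then suffices to show $\partial\,\mathcal{U}^\pm\subset C(s,q)$.

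For that, let $D$ be any component of $K(s,q)$ different from $C(s,q)$. Since $D$ does not separate the ends, and components of the compact set $K(s,q)$ can be isolated from one another by clopen decompositions, $D$ lies in some inessential open region. I expect this to be the hardest part. The naive claim that $\partial\,\mathcal{U}^-$ lies in the unique essential component is false for general compacta: a small inessential component may sit directly below $C(s,q)$ and be accessible from below. So the twist structure must be used. Suppose $z_0=(\widehat{x}_0,\mu^-(\widehat{x}_0))\in D$. Near $z_0$, along the vertical segment just below it, the function $g(\widehat{x},\widehat{y})=p_1\circ\widetilde{f}^q-\widetilde{x}-s$ has a definite sign. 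The twist condition, applied iteratively through the monotonicity argument of Le Calvez, makes $g$ strictly increasing along verticals at least near the extreme zeros. Taking the sign convention that $g<0$ below the lowest zero, one gets $g<0$ on all of $W^-$.

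Now take a separating Jordan curve $\gamma$ around $D$ that avoids $K(s,q)$, either lying in a small neighbourhood of $D$ or separating $D$ from $C(s,q)$. Since $D$ is compact, inessential and a component, such a $\gamma$ exists by a standard Zoretti-type argument. The function $g$ has constant sign on $\gamma$, because $\gamma$ is connected and misses the zero set. Part of $\gamma$ lies in $W^-$, so this sign is negative. Inside $\gamma$, the vertical line through $z_0$ crosses $D$ at $z_0$ and, going upward, must leave the disc bounded by $\gamma$ through $\gamma$, where $g<0$. Hence $g$ returns to negative values after $z_0$, and $D$ contains points where $g$ changes sign twice along a vertical. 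Using the fact that $\widetilde{f}^q$ maps verticals to graphs that are positively tilted relative to the twist, I would derive a contradiction: the image curve $\widetilde{f}^q(\{\widetilde x_0\}\times\mathbb{R})$ would have to cross $\{\widetilde x_0+s\}\times\mathbb{R}$ in a way incompatible with the orientation of neighbouring verticals. This contradiction shows that the lowest and highest points of $K(s,q)$ on each vertical belong to $C(s,q)$, which is the statement.
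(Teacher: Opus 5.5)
Your preliminary steps are sound. You show that $g\to\pm\infty$ at the two ends of each vertical, so $\mu^\pm$ are well defined. You get $g<0$ on $W^-$ from the intermediate value theorem alone; no monotonicity of $g$ along verticals is needed, and none holds in general. And if $z_0=(\widehat{x}_0,\mu^-(\widehat{x}_0))$ lay in a component $D\neq C(s,q)$, then a (compact) lift of $D$ can be surrounded by a Jordan curve $\gamma$ missing $\widetilde{K}(s,q)$ (Zoretti), and $g<0$ on $\gamma$ because $\gamma$ meets the ray below $z_0$.

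\textbf{The gap is the last step.} Going \emph{up} the vertical from $z_0$, you only learn that $g$ vanishes at $z_0$ and is negative again where the vertical leaves the disc. That is not a contradiction:
\begin{itemize}
\item For $q\geq 2$ the curve $\widetilde{f}^q(\{\widetilde{x}_0\}\times\mathbb{R})$ may cross $\{\widetilde{x}_0+s\}\times\mathbb{R}$ several times; this is exactly why $\mu^-\neq\mu^+$ can occur. So sign patterns such as $-,0,+,0,-,0,+$ along a vertical are allowed.
\item The zeros between $z_0$ and the exit point need not lie in $D$, and $g$ may merely touch $0$.
\item The ``fact'' you plan to use, that $\widetilde{f}^q$ maps verticals to graphs, is false for $q\geq 2$ in general. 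If it held, $K(s,q)$ would itself be a graph and the lemma would be empty. In this paper's setting it is Corollary~\ref{cor:B}, which depends on the present lemma through Theorem~\ref{thmBD}.
\end{itemize}
Also, your intermediate reduction to $\partial\,\mathcal{U}^\pm\subset C(s,q)$ is stronger than the statement, and nothing in your argument supports it. Only the extreme points of $K(s,q)$ on each vertical can be handled by the argument below.

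\textbf{The missing idea is dynamical.} Use the first/last intersection property, Lemma~\ref{lemTRLC04}, and push $\gamma$ forward instead of staying in the domain. Working with lifts, put $\tilde g:=p_1-p_1\circ\widetilde{f}^{-q}-s$, so that $\tilde g\circ\widetilde{f}^q=g$ and $\widetilde{f}^q(\widetilde{K}(s,q))=\tilde g^{-1}(0)$.
\begin{itemize}
\item $\tilde g<0$ on $\widetilde{f}^q(\gamma)$, and this curve bounds a disc containing $\widetilde{f}^q(z_0)=(\widetilde{x}_0+s,\nu^+(\widehat{x}_0))$.
\item By Lemma~\ref{lemTRLC04}, this point is the highest point of $\widetilde{f}^q(\widetilde{K}(s,q))$ on its vertical, so $\tilde g$ has no zero on the open ray above it.
\item Since $p_1\circ\widetilde{f}^{-q}(\widetilde{x},\widetilde{y})=\widetilde{x}-q\ell\widetilde{y}+O(1)$, we have $\tilde g\to+\infty$ along that ray, hence $\tilde g>0$ on all of it.
\item The ray must leave the disc through $\widetilde{f}^q(\gamma)$, where $\tilde g<0$, which is a contradiction.
\end{itemize}
The case of $\mu^+$ is symmetric. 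The ray above $(\widehat{x}_0,\mu^+(\widehat{x}_0))$ forces $g>0$ on $\gamma$. The ray below the lowest point $\widetilde{f}^q(\widehat{x}_0,\mu^+(\widehat{x}_0))$ carries $\tilde g<0$ and must cross $\widetilde{f}^q(\gamma)$.

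This repair requires Lemma~\ref{lemTRLC04} to be proved independently of the present statement. That is plausible, since it concerns a single vertical and its image. Note that the paper itself gives no proof here; both lemmas are quoted from Le Calvez.
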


\begin{lemma}\label{lemTRLC04}
    For all $\widehat{x}\in\mathbf{T}^1$,
    $$\widehat{f}^q(\widehat{x},\mu^{-}(\widehat{x}))=(\widehat{x},\nu^{+}(\widehat{x}))\quad\text{and}\quad \widehat{f}^q(\widehat{x},\mu^{+}(\widehat{x}))=(\widehat{x},\nu^{-}(\widehat{x})).$$
\end{lemma}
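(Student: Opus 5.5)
Fix $\widehat{x}\in\mathbf{T}^1$, a lift $\widetilde{x}\in\mathbb{R}$, and consider the vertical line $\widetilde{V}=\{\widetilde{x}\}\times\mathbb{R}$. The plan is to study the single function $g(\widetilde{y}):=p_1\circ\widetilde{f}^q(\widetilde{x},\widetilde{y})-\widetilde{x}-s$ along $\widetilde{V}$. By definition, $\widetilde{K}(s,q)\cap \widetilde{V}$ is exactly the zero set of $g$. Hence $\mu^-(\widehat{x})$ and $\mu^+(\widehat{x})$ are the smallest and largest zeros of $g$; they exist by the compactness in Lemma \ref{lemTRLC01}.

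The first step is to show that $\widetilde{f}^q(\widetilde{x},\mu^\pm(\widehat{x}))$ lies on the vertical $\{\widetilde{x}+s\}\times\mathbb{R}$. This is immediate from the definition, and projecting to the cylinder shows $\widehat{f}^q(\widehat{x},\mu^\pm(\widehat{x}))\in\{\widehat{x}\}\times\mathbb{R}$. Consequently $\widehat{f}^q(\widehat{x},\mu^\pm(\widehat{x}))=(\widehat{x},y_\pm)$ with $y_\pm\in\{\nu^-(\widehat{x})\le y\le\nu^+(\widehat{x})\}$. More precisely, $\widehat{f}^q$ maps $K(s,q)\cap(\{\widehat{x}\}\times\mathbb{R})$ bijectively onto $\widehat{f}^q(K(s,q))\cap(\{\widehat{x}\}\times\mathbb{R})$. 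Indeed, a point $\widehat{w}$ of $\widehat{f}^q(K(s,q))$ with first coordinate $\widehat{x}$ has preimage $\widehat{z}\in K(s,q)$ with $p_1(\widehat{z})\equiv p_1(\widehat{w})-s\equiv\widehat{x}$. So $\nu^\pm(\widehat{x})$ are the extreme heights of the image of the fibre set $Z:=K(s,q)\cap(\{\widehat{x}\}\times\mathbb{R})$.

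The second step, which is the heart, is to show that $\widehat{f}^q$ reverses the vertical order on $Z$: if $y<y'$ are both in $Z$, then the image of $(\widehat{x},y)$ lies above the image of $(\widehat{x},y')$. This immediately gives $\mu^-\mapsto\nu^+$ and $\mu^+\mapsto\nu^-$. To prove it, take the segment $\sigma=\{\widetilde{x}\}\times[y,y']$ in the plane. Its endpoints are mapped to $\widetilde{a}=(\widetilde{x}+s,Y)$ and $\widetilde{b}=(\widetilde{x}+s,Y')$, and $\widetilde{f}^q(\sigma)$ is an arc joining them.

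Here I would use the twist structure for the iterate. For $q=1$, the right twist condition \eqref{equaI02} makes $\widetilde{f}(\widetilde{V})$ a graph over the horizontal axis with $p_1$ strictly increasing in $\widetilde{y}$, so $g$ is strictly monotone. Then $Z$ is a single point and the statement is trivial. For general $q$, I would argue by the topology of the plane, following Le Calvez. The region to the left of $\widetilde{V}$ and the region to the left of $\widetilde{f}^q(\widetilde{V})$ are compared via the intersection of the curves $\widetilde{f}^q(\widetilde{V})$ and the vertical $\{\widetilde{x}+s\}\times\mathbb{R}$. Two consecutive intersection points along $\widetilde{f}^q(\widetilde{V})$ bound, together with the vertical segment between them, a Jordan domain. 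Orientation preservation of $\widetilde{f}^q$ and the fact that $\widetilde{f}^q(\widetilde{V})$ goes from $-\infty$ to $+\infty$ in the horizontal direction (by the Dehn-twist homotopy class, since $p_1\circ\widetilde{f}^q(\widetilde{x},\widetilde{y})\to\pm\infty$ as $\widetilde{y}\to\pm\infty$ for a right twist) then constrain the order.

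Concretely, I would compare the lowest and highest points. The part of $\widetilde{f}^q(\widetilde{V})$ coming from $\widetilde{y}<\mu^-$ lies strictly to the left of the vertical $\widetilde{x}+s$, and the part from $\widetilde{y}>\mu^+$ lies strictly to the right. The arc from $\widetilde{y}=-\infty$ up to $\mu^-$ together with the vertical ray above its endpoint must be disjoint from the rest of the curve by injectivity. A Jordan-curve argument in the half-planes then shows that the image of $(\widetilde{x},\mu^-)$ is the highest intersection point, and symmetrically the image of $(\widetilde{x},\mu^+)$ is the lowest.

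The main obstacle I expect is making this last planar separation argument rigorous without the twist condition for $\widetilde{f}^q$. I would first try to obtain it by proving that the left piece $\widetilde{f}^q(\{\widetilde{x}\}\times(-\infty,\mu^-))$ cannot pass above $\widetilde{a}$ and reconnect. Failing that, I would invoke Lemma \ref{lemTRLC03}, which puts $\mathrm{Graph}(\mu^\pm)$ in $C(s,q)$, together with the separating property of $C(s,q)$.
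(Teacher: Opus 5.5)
Your first step is correct: the fibre of $\widehat f^q(K(s,q))$ over $\widehat x$ is exactly the image of the fibre $Z$, so the lemma is really about which points of $Z$ have the highest and the lowest image.

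The gap is in the second step, and it has two parts.

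\textbf{The order-reversal claim is false for $q\ge 3$.} You claim $\widehat f^q$ reverses the vertical order on all of $Z$. This does hold for $q=2$, because then the final height is a strictly decreasing function of $x_1=p_1\circ\widetilde f(\widetilde x,\widetilde y)$. It fails for $q\ge 3$. Take the standard map $\widetilde f(x,y)=(x+y+\beta\sin 2\pi x,\,y+\beta\sin 2\pi x)$. Its orbits satisfy $x_{i+1}=2x_i-x_{i-1}+\beta\sin 2\pi x_i$ and $y_3=x_3-x_2$. With $\beta=10/(2\pi)$, $q=3$, $s=1$, there are two points of $Z$ whose itineraries $(x_0,x_1,x_2,x_3)$ are, up to rounding, $(0.37,0.45,0.99,1.37)$ and $(0.37,0.55,0.20,1.37)$. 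The lower starting point therefore has the lower image. Only the extremal statement survives, which is why the lemma is phrased as it is.

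\textbf{The planar argument for the extremal points does not close the gap either.} You assert that the vertical ray above $\widetilde a$ misses the rest of $\widetilde f^q(\widetilde V)$ ``by injectivity''. That is the conclusion itself: the ray is not part of the curve, so injectivity says nothing about it. In fact no argument can work that uses only that $\widetilde f^q(\widetilde V)$ is a proper embedded line with its two tails to the left and to the right of $\{\widetilde x+s\}\times\mathbb R$. An S-shaped line crossing this vertical at heights $0,1,2$, in this order along the curve, has all these properties, and it is the image of a vertical by some orientation-preserving homeomorphism. Yet its first crossing is the lowest. Your fallback via Lemma~\ref{lemTRLC03} does not help, since that lemma says nothing about the vertical order of the images.

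\textbf{The missing idea.} $\widetilde f^q$ is a composition of $q$ twist maps, and the twist condition must be used at every step. A standard way to do this is a comparison principle for itineraries, as in Le Calvez's work.
\begin{itemize}
\item For $x,x'\in\mathbb R$, let $Y^-(x,x')$ be the unique $y$ with $p_1\circ\widetilde f(x,y)=x'$; it is strictly increasing in $x'$.
\item Let $Y^+(x,x'):=p_2\circ\widetilde f(x,Y^-(x,x'))$; it is strictly decreasing in $x$, because $\widetilde f^{-1}$ deviates verticals to the left.
\item Orbit segments are exactly the $(x_0,\dots,x_q)$ with $E_i:=Y^-(x_i,x_{i+1})-Y^+(x_{i-1},x_i)=0$ for $0<i<q$. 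Each $E_i$ is strictly increasing in $x_{i-1}$ and in $x_{i+1}$.
\end{itemize}
Let $b$ be the itinerary of a point $(\widetilde x,t)$ of the fibre with $t\neq\mu^-(\widehat x)$, and let $c(\tau)$ be the itinerary of $(\widetilde x,\tau)$. As $\tau\to-\infty$, one has $c_i(\tau)<b_i$ for $1\le i\le q$. Suppose this fails for some $\tau<\mu^-(\widehat x)$, and take the first such $\tau$. There $c\le b$ with some $c_i=b_i$, and every index is excluded:
\begin{itemize}
\item $i=1$ is excluded by the twist condition;
\item $i=q$ is excluded by minimality of $\mu^-$;
\item $1<i<q$ is excluded because strict monotonicity of $E_i$ would force $c_{i\pm1}=b_{i\pm1}$, hence $c=b$.
\end{itemize}
Letting $\tau\uparrow\mu^-(\widehat x)$ gives $c_{q-1}(\mu^-)\le b_{q-1}$. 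The inequality is strict, since $c_q=b_q$ and the two orbits differ. The final height $Y^+(x_{q-1},\widetilde x+s)$ is strictly decreasing in $x_{q-1}$, so the image of $(\widehat x,\mu^-(\widehat x))$ is the highest point, $(\widehat x,\nu^+(\widehat x))$. The same argument run from $\tau\to+\infty$ gives $\mu^+\mapsto\nu^-$.
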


The following result follows from the ideas developed in \cite{le1995construction} and will be stated in the form of a lemma.

\begin{lemma}\label{lemTRLC05}
    Let $p,s\in\mathbb{Z}$ and let $q$ be a positive integer. Assume that there is no point $(\widetilde{x},\widetilde{y})\in\mathbb{R}^2$ such that
    $$\widetilde{f}^q(\widetilde{x},\widetilde{y})=(\widetilde{x}+s,\widetilde{y}+p).$$
    Then there exists a continuous function $\theta:\mathbf{T}^1\to \mathbb{R}$ whose graph separates
    $$\widehat{f}^q(K(s,q))\quad\text{and}\quad K(s,q)+(0,p).$$
\end{lemma}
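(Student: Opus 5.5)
The plan is to compare, vertical by vertical, the set $\widehat{f}^q(K(s,q))$ with the translate $K(s,q)+(0,p)$. I will show that one of them lies strictly above the other on every vertical $\{\widehat{x}\}\times\mathbb{R}$, with a uniform positive gap. A continuous function $\theta$ can then be inserted between them.

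\emph{Step 1: constant sign of the displacement.} On $K(s,q)$ define
$$\delta(\widehat{z}):=p_2\circ\widehat{f}^q(\widehat{z})-p_2(\widehat{z})-p,$$
using any lift to $\mathbb{R}^2$. If $\widetilde{z}\in\widetilde{K}(s,q)$, then $p_1\circ\widetilde{f}^q(\widetilde{z})=p_1(\widetilde{z})+s$. So $\delta(\widehat{z})=0$ would give a point with $\widetilde{f}^q(\widetilde{z})=\widetilde{z}+(s,p)$, which is excluded by hypothesis. Hence $\delta$ is continuous and never zero on $K(s,q)$. By Lemma \ref{lemTRLC01}, the component $C(s,q)$ is connected, so $\delta$ has constant sign on $C(s,q)$. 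Assume $\delta>0$ there; the other case is symmetric, with the roles of the two sets exchanged.

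\emph{Step 2: ordering on each vertical.} Fix $\widehat{x}\in\mathbf{T}^1$. By Lemma \ref{lemTRLC03}, the points $(\widehat{x},\mu^{\pm}(\widehat{x}))$ lie in $C(s,q)$, so $\delta>0$ at both of them. By Lemma \ref{lemTRLC04}, $\widehat{f}^q(\widehat{x},\mu^+(\widehat{x}))=(\widehat{x},\nu^-(\widehat{x}))$. Therefore $\delta>0$ at $(\widehat{x},\mu^+(\widehat{x}))$ reads
$$\nu^-(\widehat{x})>\mu^+(\widehat{x})+p\quad\text{for all }\widehat{x}\in\mathbf{T}^1.$$
By the definitions of $\mu^+$ and $\nu^-$, this says that on each vertical every point of $\widehat{f}^q(K(s,q))$ lies strictly above every point of $K(s,q)+(0,p)$.

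\emph{Step 3: inserting a continuous graph.} Since $K(s,q)$ and $\widehat{f}^q(K(s,q))$ are compact (Lemma \ref{lemTRLC01}), $\mu^+$ is upper semicontinuous and $\nu^-$ is lower semicontinuous. Then $\nu^--\mu^+-p$ is a positive lower semicontinuous function on the compact circle, so it has a positive minimum $\varepsilon$. A continuous $\theta$ with $\mu^++p<\theta<\nu^-$ now exists, for instance by the Katětov--Tong insertion theorem. More concretely, one can take $\theta$ to be a continuous function within $\varepsilon/3$ of $\mu^++p+\varepsilon/2$ from above, obtained by regularizing the upper semicontinuous function with sup-convolution and then continuous approximation.

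The graph of $\theta$ is an essential simple closed curve in $\mathbf{T}^1\times\mathbb{R}$. By Step 2 and the choice of $\theta$, $\widehat{f}^q(K(s,q))$ lies in its upper complementary component and $K(s,q)+(0,p)$ lies in its lower one, so the graph separates the two sets.

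The only delicate point is Step 1. The whole set $K(s,q)$ may be disconnected, so the sign of $\delta$ is a priori controlled only on $C(s,q)$. Step 2 avoids this difficulty: Lemmas \ref{lemTRLC03} and \ref{lemTRLC04} guarantee that the extremal points $(\widehat{x},\mu^{\pm}(\widehat{x}))$, which determine $\mu^+$ and $\nu^-$, all lie in $C(s,q)$.
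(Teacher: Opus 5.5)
Your proof is correct. The paper does not prove this lemma itself: it attributes it to Le Calvez's ideas \cite{le1995construction} and uses it as a black box. Your Steps 1 and 2 do appear in the paper, but inside the proof of Theorem~\ref{thmBD}. There the dichotomy is run on $C(s,q)$: either some point of $C(s,q)$ has vertical displacement at most $p$, and connectedness produces $\widehat{w}^*$ with $\widehat{f}^q(\widehat{w}^*)=\widehat{w}^*+(0,p)$; or every point has displacement larger than $p$, whence $\nu^->\mu^++p$. Only after that is Lemma~\ref{lemTRLC05} invoked to get $\theta$.

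What your route adds is the observation that, once Lemmas~\ref{lemTRLC01}, \ref{lemTRLC03} and~\ref{lemTRLC04} are granted, the lemma is an elementary insertion problem. Compactness makes $\mu^+$ upper and $\nu^-$ lower semicontinuous, the gap $\nu^--\mu^+-p$ has a positive minimum, and a continuous function fits in between. This makes the statement self-contained relative to the preceding Le Calvez lemmas. It also gives slightly more than stated: a uniform vertical gap, a Lipschitz choice of $\theta$, and a displacement larger than $p$ on all of $K(s,q)$, not just on $C(s,q)$. The citation, for its part, keeps the exposition short and places the result in Le Calvez's original framework.

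One correction in Step 3: your ``more concrete'' alternative is wrong as written. The function $\mu^+$ is in general discontinuous (for instance where two points of $K(s,q)$ on a vertical merge and disappear), and nothing prevents its jumps from exceeding $\varepsilon$. So in general no continuous function stays uniformly within $\varepsilon/3$ of $\mu^++p+\varepsilon/2$.

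The Kat\v{e}tov--Tong route is fine: apply it to $\mu^++p+\varepsilon/3\le\nu^--\varepsilon/3$. A correct concrete version is as follows. Take $g_L(x)=\sup_{y}\{\mu^+(y)+p-L\,d_{\mathbf{T}^1}(x,y)\}$, which is Lipschitz and decreases pointwise to $\mu^++p$ as $L\to\infty$. Then the lower semicontinuous functions $\nu^--g_L$ increase to a function that is $\ge\varepsilon$. By compactness, $\nu^--g_L>\varepsilon/2$ on $\mathbf{T}^1$ for some $L$, and $\theta:=g_L+\varepsilon/4$ works.
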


\subsection{Distal and proximal homeomorphisms}

The following definitions and results are taken from Auslander's book~\cite{auslander1988minimal}. 
Let $X$ be a compact metric space and $f:X\to X$ a homeomorphism. Recall that $f$ is \emph{distal} if for every pair of distinct points $x,y\in X$,
\begin{equation}\label{eqDPH01}
    \inf_{n\in\mathbb{Z}}d\bigl(f^n(x),f^n(y)\bigr)>0.
\end{equation}
Equivalently, if for $x,y\in X$,
\begin{equation}\label{eqDPH02}
    \inf_{n\in\mathbb{Z}}d\bigl(f^n(x),f^n(y)\bigr)=0\Longrightarrow x=y.
\end{equation}

Similarly, a homeomorphism $f:X\to X$ is said to be \emph{proximal} if it is not distal. That is, for some $x\neq y\in X$,
\begin{equation}\label{eqDPH03}
    \inf_{n\in\mathbb{Z}}d\bigl(f^n(x),f^n(y)\bigr)=0.
\end{equation}

\begin{remark}
    Observe that both distality and proximality are invariant under conjugation.
\end{remark}

A classical result in topological dynamics asserts that, on compact metric spaces, distality together with transitivity implies minimality: 

\begin{lemma}[\cite{auslander1988minimal}, Auslander]\label{lemDPH01}
     Let $f:X\to X$ be a homeomorphism of the compact metric space $X$. If $f$ is distal and topologically transitive, then $f$ is minimal.
\end{lemma}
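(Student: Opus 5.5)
The plan is to show that under distality \emph{every} orbit closure is a minimal set. Transitivity then finishes the argument: if $x_0$ has a dense orbit, its orbit closure is $X$, and that orbit closure is minimal, so $f$ is minimal. The only work is therefore the standard fact that distal homeomorphisms are pointwise almost periodic. I would prove it with the Ellis enveloping semigroup.

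Let $E\subset X^X$ be the closure, in the product (pointwise convergence) topology, of $\{f^n: n\in\mathbb{Z}\}$. By Tychonoff, $E$ is a compact space. It is a semigroup under composition, and right multiplication $p\mapsto pq$ is continuous for each fixed $q$. For every $x\in X$ we have $Ex=\overline{\{f^n(x)\}_{n\in\mathbb Z}}$.

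The steps are as follows.

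First, by Zorn's lemma, $E$ contains a minimal closed left ideal $I$. By the Ellis--Numakura lemma (a compact right-topological semigroup has an idempotent), $I$ contains an idempotent $u=u^2$.

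Second, fix $x\in X$. Choose a net $f^{n_i}\to u$ pointwise. Then
$$d\bigl(f^{n_i}(x),f^{n_i}(ux)\bigr)\to d\bigl(ux,u(ux)\bigr)=d(ux,ux)=0,$$
so $\inf_{n\in\mathbb Z} d(f^n(x),f^n(ux))=0$. By the distality criterion \eqref{eqDPH02}, this forces $ux=x$, and hence $x\in Ix$.

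Third, $Ix$ is a minimal set. It is closed, being the continuous image of the compact set $I$ under $p\mapsto px$. It is invariant, since $fI\subset I$. If $y=px\in Ix$ with $p\in I$, then $Ey=Epx$. Now $Ep$ is a closed left ideal contained in $I$, so $Ep=I$ by minimality, and therefore $Ey=Ix$. Thus every orbit closure inside $Ix$ is all of $Ix$.

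Combining the steps: $x\in Ix$, so $Ex\subset E(Ix)\subset Ix\subset Ex$. Hence the orbit closure of $x$ equals the minimal set $Ix$. Applying this to the point $x_0$ with dense orbit gives $X=Ex_0$ minimal.

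The main obstacle is the existence of the idempotent in $I$, i.e.\ the Ellis--Numakura lemma. I would prove it by a second Zorn argument. Take a minimal nonempty closed subsemigroup $S\subset I$ and pick $v\in S$. The set $Sv$ is a closed subsemigroup of $S$, so $Sv=S$. Consequently $\{s\in S: sv=v\}$ is a nonempty closed subsemigroup of $S$, hence equal to $S$. In particular $v^2=v$.

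A minor point to check is that passing from nets to the infimum over $n\in\mathbb Z$ is legitimate. It is, since the distances along the net tend to $0$ and each term is one of the values $d(f^n(x),f^n(ux))$.
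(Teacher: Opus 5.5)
Your proof is correct. The paper gives no argument of its own for this lemma, only the citation to Auslander's book, and what you wrote is essentially the standard enveloping-semigroup proof found there: a minimal idempotent $u$ makes $x$ and $ux$ proximal, distality forces $x=ux$, and so every orbit closure $Ex=Ix$ is minimal. The one step left implicit is the passage from transitivity to a dense orbit: if transitivity is taken in the open-set sense, as in Proposition~\ref{transitivao}, you need the usual Baire category argument on the compact metric space $X$ to produce a point $x_0$ with dense orbit.
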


\section{Proofs}

\subsection{Proof of Theorem~\ref{thm:A}}

\emph{Sketch of the proof.} We first show that $f$ has bounded deviations.
It then follows from Theorem~\ref{teoSC01} that $f$ is semi-conjugate to the
irrational rotation $R_{\alpha}$ on $\mathbf{T}^1$. Next, we prove that the
family of circloids $\{C_t\}_{t\in \mathbf{T}^1}\subseteq\mathbf{T}^2$ built in subsection 2.1,
consists of graphs of uniformly Lipschitz functions over the horizontal 
coordinate. Finally, we show that this family consists of an
$f$-invariant foliation.

\subsubsection{Bounded deviations}


The next theorem is the bounded deviations result.

\begin{theorem}
\label{thmBD} Let $\widetilde{f}\in \mathrm{Diff}_{\mathrm{tw}}^{1}(\mathbb{R}^{2})$ be
such that $\rho _{V}(\widetilde{f})=\{\alpha \}$ for some $\alpha \notin %
\mathbb{Q}$. If $f$ is the torus twist map lifted by $\widetilde{f}$, 
then $f$ has bounded deviations.
\end{theorem}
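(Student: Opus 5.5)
The plan is to argue by contradiction, using Le Calvez's theory (Lemmas \ref{lemTRLC01}--\ref{lemTRLC05}) to build vertical barriers. Since $\rho_V(\widetilde f)=\{\alpha\}$ with $\alpha\notin\mathbb{Q}$, no point can satisfy $\widetilde f^q(\widetilde x,\widetilde y)=(\widetilde x+s,\widetilde y+p)$ for any $s,p\in\mathbb{Z}$ and $q>0$. Such a point would project to a periodic orbit with vertical rotation number $p/q\neq\alpha$.

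\textbf{Step 1: barrier curves.} Lemma \ref{lemTRLC05} therefore applies for every triple $(p,s,q)$. It gives a continuous $\theta=\theta_{p,s,q}:\mathbf{T}^1\to\mathbb{R}$ whose graph separates $\widehat f^q(K(s,q))$ from $K(s,q)+(0,p)$. Since $K(s,q)$ contains the essential continuum $C(s,q)$, which lies between $\mathrm{Graph}(\mu^-)$ and $\mathrm{Graph}(\mu^+)$, one of two alternatives holds. Either $\widehat f^q(C(s,q))$ lies strictly below $C(s,q)+(0,p)$, or it lies strictly above it. By Lemma \ref{lemTRLC04}, $\widehat f^q$ maps $\mathrm{Graph}(\mu^\mp)$ onto the graphs of $\nu^\pm$. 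Which alternative occurs must be decided by comparing $p/q$ with $\alpha$. If $p/q>\alpha$ and $\widehat f^q(C(s,q))$ were above $C(s,q)+(0,p)$, then iterating would force points in the region above $C(s,q)$ to have vertical rotation at least $p/q$. This contradicts $\rho_V=\{\alpha\}$, and the case $p/q<\alpha$ is symmetric. So for $p/q>\alpha$ we obtain an essential continuum $\Gamma$, namely $C(s,q)$ with its filled lower region, satisfying $\widehat f^q(\Gamma)\prec\Gamma+(0,p)$. For $p/q<\alpha$ we obtain the reverse inequality.

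\textbf{Step 2: a single ordered family.} A fixed $p/q$ gives only a deviation bound that grows linearly in $n$ with slope $p/q-\alpha$. To get a bound uniform in $n$, I would mimic the circle-homeomorphism proof that $|F^n(x)-x-n\rho|\le 1$. The aim is one essential continuum $\Gamma$, of bounded height, such that for all $(n,m)\in\mathbb{Z}^2$,
$$\widehat f^n(\Gamma)+(0,-m)\ \text{lies below }\Gamma \iff n\alpha<m,$$
with uniform bounded overlap of heights. The candidate is the set $C(s,q)$ with $s$ fixed once and for all; a natural choice is $s=0$. The twist condition then makes $\widetilde K(0,q)$ the set of points whose $q$-th image lands on the same vertical. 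This set is compatible with composition in the sense needed for a Sturmian ordering of the translates $\widehat f^i(\Gamma)+(0,j)$. The inequalities between such translates come from Step 1 applied to the pairs $(n,m)$.

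\textbf{Step 3: conclusion.} Given such an ordered family, take a point $\widehat z$ lying between $\Gamma+(0,j)$ and $\Gamma+(0,j+1)$. Then $\widehat f^n(\widehat z)$ lies between $\widehat f^n(\Gamma)+(0,j)$ and $\widehat f^n(\Gamma)+(0,j+1)$. By the ordering, these are trapped between $\Gamma+(0,\lfloor n\alpha\rfloor+j)$ and $\Gamma+(0,\lceil n\alpha\rceil+j+1)$. This yields $|p_2\widehat f^n(\widehat z)-p_2(\widehat z)-n\alpha|\le c$, where $c$ depends only on the height of $\Gamma$ and on $Max_{disp}$.

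I expect Step 2 to be the main obstacle. The curves given by Lemma \ref{lemTRLC05} depend on $q$, and there is no a priori reason for a single continuum to work for all $(n,m)$. The first thing I would try is to avoid building $\Gamma$ directly. Instead, suppose deviations are unbounded: take $\widehat z$ and $n$ with $p_2\widehat f^n(\widehat z)-p_2(\widehat z)-n\alpha>c$ for huge $c$. Choose a best rational approximation $p/q$ of $\alpha$ from above with $q\le n$. Then combine the barrier for $(p,q)$, applied roughly $n/q$ times, with the fact that $\widehat z$ crossed more translates of $C(0,q)$ than allowed. The contradiction should come from the crossing count conflicting with the uniform height bound of $C(0,q)$, which is controlled through $\mu^\pm$ by the twist constant $k_{tw}$ and by $Max_{disp}$.
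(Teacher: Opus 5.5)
Your Step 1 is correct, and it is in fact the engine of the paper's proof. For $p/q>\alpha$, the graph of $\theta$ from Lemma \ref{lemTRLC05} must have $\widehat f^q(K(s,q))$ on its lower side. Otherwise $\widehat f^q(\mathrm{Graph}(\theta))-(0,p)$ lies above $\mathrm{Graph}(\theta)$, and every point has vertical rotation at least $p/q$.

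The gap is that you never connect an orbit with large deviation to this barrier. Step 2 asks for a single continuum $\Gamma$ with $\widehat f^n(\Gamma)-(0,m)$ below $\Gamma$ iff $n\alpha<m$. That is exactly the property of the circloids $\widehat C_r$ of subsection 2.1, which are built \emph{from} bounded deviations. The $q$-dependent barriers of Lemma \ref{lemTRLC05} give no way to produce such a $\Gamma$.

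Your fallback breaks at two concrete points.
\begin{itemize}
\item It needs a bound on the vertical extent of $C(0,q)$ uniform in $q$, but $k_{tw}$ and $Max_{disp}$ only control one iterate. By Lemma \ref{lemTRLC04}, on each vertical the lowest point of $K(s,q)$ has $q$-step vertical displacement $\nu^+-\mu^-$ and the highest has $\nu^--\mu^+$. Their difference is $(\nu^+-\nu^-)+(\mu^+-\mu^-)$. So a fibre of height $H$ forces two points on the same vertical whose $q$-step displacements differ by at least $H$. Even the fibrewise height is therefore a deviation-type quantity, and the argument is circular.
\item Even granting such a bound, let $L$ be the closed region below $C(0,q)$. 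The inclusion $\widehat f^q(L)\subset L+(0,p)$ only controls the times $kq$. For $n=kq+r$, the last $r<q$ iterates are bounded only by $r\,Max_{disp}$, which is unbounded since your $q$ grows with $n$.
\end{itemize}

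The missing idea is Lemma \ref{lemBD01}. It uses the twist condition quantitatively, which is where the constant $C$ of (\ref{eqBD05}) and its term $(2+bound)/k_{tw}$ come from. It shows that a deviation larger than $C$ produces a point $\widehat w$ with $p_1\widehat f^q(\widehat w)=p_1(\widehat w)$, i.e. $\widehat w\in K(s,q)$ for some $s$, and $p_2\widehat f^q(\widehat w)-p_2(\widehat w)\ge q\alpha+2$.

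With this in hand, you need no ordered family, height bound or continued fractions. Pick an integer $p\in\,]q\alpha,q\alpha+2[$. On the vertical through $\widehat w$, the point $\widehat f^q(\widehat w)\in\widehat f^q(K(s,q))$ lies strictly above $\widehat w+(0,p)\in K(s,q)+(0,p)$. Hence $\widehat f^q(K(s,q))$ must lie on the upper side of $\mathrm{Graph}(\theta)$, which your Step 1 rules out because $p/q>\alpha$. Negative deviations are handled symmetrically.

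This is exactly the paper's argument: a contradiction at a single $q$, not a uniform estimate assembled from many values of $q$.
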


Before proving this theorem, we establish some notation. Fixed
$\widetilde{f}\in \mathrm{Diff}_{\mathrm{tw}}^{1}(\mathbb{R}^{2}),$ it can
be written in the form 
\begin{equation}
\widetilde{f}(\widetilde{x},\widetilde{y})=(\widetilde{x}+\ell \widetilde{y}+%
\widetilde{\varphi }_{1}(\widetilde{x},\widetilde{y}),\widetilde{y}+%
\widetilde{\varphi }_{2}(\widetilde{x},\widetilde{y})),\quad (\widetilde{x},%
\widetilde{y})\in \mathbb{R}^{2},  \label{eqBD01}
\end{equation}
where $\ell \in \mathbb{Z}\setminus \{0\}$, and the functions $\widetilde{%
\varphi }_{i}:\mathbb{R}^{2}\to \mathbb{R}$, $i=1,2$, are $\mathbb{Z}^{2}$%
-periodic. Without loss of generality, assume $\ell>0$. Then, there exist constants 
$Max_{disp},\ bound,\ k_{tw}>0$
such that for every $\widetilde{z}=(\widetilde{x},\widetilde{y})\in %
\mathbb{R}^{2}$ the following properties hold: 
\begin{equation}
p_{2}\circ \widetilde{f}(\widetilde{z})-p_{2}(\widetilde{z})>-Max_{disp};
\label{eqBD02}
\end{equation}

\begin{equation}
\displaystyle\dfrac{\partial \ p_{1}\circ \widetilde{f}(\widetilde{z})}{%
\partial \widetilde{y}}>k_{tw}\quad \text{(twist condition)};  \label{eqBD03}
\end{equation}

\begin{equation}
\left| \dfrac{\partial \ p_{1}\circ \widetilde{f}(\widetilde{z})}{\partial 
\widetilde{x}}\right| <bound.  \label{eqBD04}
\end{equation}

With the notation introduced above, define 
\begin{equation}
C:=6+\frac{2+bound}{k_{tw}}+|\alpha|+Max_{disp}>0.  \label{eqBD05}
\end{equation}

Then we have the following lemma cf.~\cite[Lemma 3.1]{addas2007simple}.

\begin{lemma}
\label{lemBD01} Let $\widetilde{f}\in \mathrm{Diff}_{\mathrm{tw}}^{1}(%
\mathbb{R}^{2})$ 
and assume that for some real number $\alpha$
, one of the possibilities below holds: 

\begin{enumerate}
\item   there exists $\widetilde{z}_{0}\in \mathbb{R}^{2}$ and  $N_{0}>1$ such
that $p_{2}\circ \widetilde{f}^{N_{0}}(\widetilde{z}_{0})-p_{2}(\widetilde{z}%
_{0})-N_{0}\alpha >C;$

\item   there exists $\widetilde{z}_{1}\in \mathbb{R}^{2}$ and $N_{1}>1$ such
that $p_{2}\circ \widetilde{f}^{N_{1}}(\widetilde{z}_{1})-p_{2}(\widetilde{z}%
_{1})-N_{1}\alpha <-C;$
\end{enumerate}

Then, respectively, there exist points $\widetilde{z}^{\prime },\widetilde{w}%
^{\prime }\in \mathbb{R}^{2}$ and positive integers $n_{1}$ and $n_{2}$ such that

\begin{itemize}
\item[$(1)$]  $\widetilde{f}^{n_{1}}(\widetilde{z}^{\prime })=\widetilde{z}%
^{\prime }+(s,n_{1}\alpha +\mathrm{pos})$, for some $s\in \mathbb{Z}$ and $%
\mathrm{pos}\geq 2,$

\item[$(2)$]  $\widetilde{f}^{n_{2}}(\widetilde{w}^{\prime })=\widetilde{w}%
^{\prime }+(l,n_{2}\alpha +\mathrm{neg})$, for some $l\in \mathbb{Z}$ and $%
\mathrm{neg}\leq -2$.
\end{itemize}
\end{lemma}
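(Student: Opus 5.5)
The plan is to turn one orbit with a large vertical deviation into a point whose $N_0$-th iterate is a pure translate of itself, with horizontal translation an integer $s$ and vertical deviation still at least $2$. Le Calvez's sets $K(s,q)$ (Lemmas \ref{lemTRLC01}--\ref{lemTRLC04}) are built for exactly this, since their points satisfy $p_1\circ\widetilde{f}^{q}(\widetilde z)=\widetilde x+s$. I treat case 1; case 2 is identical after reversing all vertical inequalities (equivalently, conjugating by the reflection $(\widetilde x,\widetilde y)\mapsto(\widetilde x,-\widetilde y)$ composed with the appropriate orientation change, which keeps $\ell>0$ up to passing to $\widetilde f^{-1}$). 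Set $q=N_0$ and $\widetilde z_0=(\widetilde x_0,\widetilde y_0)$, and write $D(\widetilde z):=p_1\circ\widetilde f^{q}(\widetilde z)-\widetilde x$ for the horizontal displacement. Since $\widetilde f$ is homotopic to the Dehn twist with $\ell>0$, $D$ tends to $\pm\infty$ as $\widetilde y\to\pm\infty$ along any vertical line.

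\textbf{Step 1 (choice of $s$ and the vertical segment).} Choose $s:=\lfloor D(\widetilde z_0)\rfloor$, so that $s\le D(\widetilde z_0)<s+1$. On the vertical line $\{\widetilde x_0\}\times\mathbb R$, the function $D$ takes the value $s$ somewhere below or at $\widetilde z_0$, and the value $s+1$ somewhere above it. So the line meets $K(s,q)$ and $K(s+1,q)$, and by Lemma \ref{lemTRLC03} it meets $C(s,q)$ in the points $(\widetilde x_0,\mu^{\pm}(\widetilde x_0))$. I work with the point $\widetilde z_1:=(\widetilde x_0,\mu^{-}(\widetilde x_0))$ of $K(s,q)$ on this line (or the lowest point of $K(s,q)$ above $\widetilde z_0$, whichever is closer to $\widetilde z_0$ vertically). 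By Lemma \ref{lemTRLC04},
\[
\widetilde f^{q}(\widetilde z_1)=(\widetilde x_0+s,\nu^{+}(\widetilde x_0)).
\]

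\textbf{Step 2 (comparing vertical displacements).} I want
\[
\nu^{+}(\widetilde x_0)-\mu^{-}(\widetilde x_0)\ \ge\ p_2\circ\widetilde f^{q}(\widetilde z_0)-\widetilde y_0-(C-2),
\]
which gives conclusion (1) with $\widetilde z'=\widetilde z_1$, $n_1=q$ and $\mathrm{pos}\ge 2$. The tool is the order property behind Lemma \ref{lemTRLC04}: the image under $\widetilde f^{q}$ of the part of $K(s,q)$ lying vertically below $\widetilde z_0$ sits on the vertical line $\widetilde x_0+s$ with maximal height $\nu^{+}$. The point $\widetilde f^{q}(\widetilde z_0)$ has abscissa within distance $1$ of that line. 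Two estimates then transfer the height of $\widetilde f^{q}(\widetilde z_0)$ to $\nu^{+}(\widetilde x_0)$, up to a bounded error.
\begin{itemize}
\item Apply the one-step twist estimates \eqref{eqBD03}--\eqref{eqBD04} to the last iterate. A horizontal mismatch of size $<1$ plus a $\mathrm{bound}$-sized distortion is absorbed by a vertical adjustment of length at most $(2+\mathrm{bound})/k_{tw}$.
\item Use the lower bound \eqref{eqBD02} on the vertical step, together with $|\alpha|$, to control the single step where we pass from $\widetilde f^{q-1}$ to $\widetilde f^{q}$.
\end{itemize}
The remaining constants ($6$, $|\alpha|$, $Max_{disp}$) in \eqref{eqBD05} absorb the unit shifts coming from the floor in $s$ and from $\mathbb Z^2$-periodicity. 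This is why $C$ has the form \eqref{eqBD05}.

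\textbf{Main obstacle.} Only $\widetilde f$, not $\widetilde f^{q}$, satisfies the twist condition. So there is no monotonicity of $D$ along the vertical line, and I cannot simply say that $\widetilde z_0$ lies ``between'' $K(s,q)$ and $K(s+1,q)$ with orbits ordered accordingly. My first attempt will be to replace this missing monotonicity by the separation property of $C(s,q)$ (Lemma \ref{lemTRLC01}) and the extremal characterization of $\mu^{-},\nu^{+}$ (Lemma \ref{lemTRLC04}):
\begin{itemize}
\item the vertical segment from $\widetilde z_1$ down to $-\infty$ contains no point of $K(s,q)$ other than $\widetilde z_1$, so its $\widetilde f^{q}$-image stays on one side of the line $\widetilde x=\widetilde x_0+s$;
\item connecting $\widetilde z_0$ to this segment then forces $\widetilde f^{q}(\widetilde z_0)$ to lie below the image height $\nu^{+}$ up to the one-step twist error.
\end{itemize}
If this comparison argument breaks down, the fallback is Lemma \ref{lemTRLC05}, used by contradiction. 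Suppose no $\widetilde z'$ with $\widetilde f^{q}(\widetilde z')=\widetilde z'+(s,p)$ exists for the integer $p$ closest to $q\alpha+2$. Then a graph separates $\widetilde f^{q}(K(s,q))$ from $K(s,q)+(0,p)$. This bounds the vertical displacement of every point whose horizontal displacement is near $s$, and that bound contradicts hypothesis 1 once the errors are absorbed into $C$. The price is that this yields an integer $p$, so the statement must be read with $\mathrm{pos}=p-q\alpha\ge 2$.
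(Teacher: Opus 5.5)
Your Step 2 carries the whole content of the lemma. With the constant $C$ of \eqref{eqBD05} it is not just unproved but false. Lemma \ref{lemTRLC04} only says that $\widetilde f^{q}(V_{\widetilde x_0})$ first meets $\{\widetilde x_0+s\}\times\mathbb R$ at its highest point. After that first meeting, the image of the segment of $V_{\widetilde x_0}$ between $\mu^-(\widetilde x_0)$ and $\widetilde y_0$ may pass to the right of this vertical and climb. The one-step bounds \eqref{eqBD03}--\eqref{eqBD04} at the last iterate are local, and they cannot be transported along $\widetilde f^{q-1}(V_{\widetilde x_0})$, which is no longer a graph.

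Here is a concrete counterexample to Step 2. Let $\widetilde f(\widetilde x,\widetilde y)=(\widetilde x+\widetilde y,\ \widetilde y+A\sin 2\pi(\widetilde x+\widetilde y))$ with $A=21/2$, and take $\alpha=0$, $k_{tw}=0.99$, $bound=1.01$, $Max_{disp}=10.51$, so $C\approx 19.55$. For $\widetilde z_0=(3/4,1/2)$ one has $p_1\circ\widetilde f(\widetilde z_0)\equiv p_1\circ\widetilde f^{2}(\widetilde z_0)\equiv 1/4 \pmod 1$. Hence $p_2\circ\widetilde f^{2}(\widetilde z_0)-p_2(\widetilde z_0)=2A=21>C$, and hypothesis 1 holds with $N_0=2$. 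However, the vertical step at a point $\widetilde u$ equals $A\sin 2\pi\, p_1\circ\widetilde f(\widetilde u)$. So every point of $\{3/4\}\times\mathbb R$ whose second iterate lies on some $\{3/4+s\}\times\mathbb R$, $s\in\mathbb Z$, has vertical displacement $A\sin(2\pi x_1)-A\le 0$, where $x_1$ is the abscissa of its first iterate. Thus $\nu^{+}(\widetilde x_0)-\mu^{-}(\widetilde x_0)\le 0$, while Step 2 demands at least $21-(C-2)>3$. No point of $\bigcup_s K(s,N_0)$ on the vertical through $\widetilde z_0$ can work at time $N_0$.

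The fallback via Lemma \ref{lemTRLC05} does not help. The separating graph only constrains points of $K(s,q)$. In the case where $\widehat f^{q}(K(s,q))$ lies below $K(s,q)+(0,p)$, contradicting hypothesis 1 requires exactly the same comparison between $\widetilde z_0$ and $K(s,q)$.

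The missing idea is to spend one extra iterate, which the summands $|\alpha|+Max_{disp}$ in $C$ can pay for. Apply the ordering property at time $q=N_0$ to the vertical $V_c$ through $\widetilde f^{q}(\widetilde z_0)$, where $c:=p_1\circ\widetilde f^{q}(\widetilde z_0)$. This $c$ need not be an integer translate of $\widetilde x_0$, but the property that the first intersection with a vertical is the highest holds for any vertical, as used in the proof of Corollary \ref{cor:B}.

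Let $t_0'\le\widetilde y_0$ be the least $t$ with $p_1\circ\widetilde f^{q}(\widetilde x_0,t)=c$. Then $t_*:=p_2\circ\widetilde f^{q}(\widetilde x_0,t_0')\ge p_2\circ\widetilde f^{q}(\widetilde z_0)$. Moreover, $\widetilde f^{q}(\{\widetilde x_0\}\times]-\infty,t_0'[)$ lies strictly left of $V_c$. So its image under $\widetilde f$ lies strictly above $\widetilde f(V_c)$, which is a graph over the horizontal axis by the twist condition.

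Set $e:=p_1\circ\widetilde f(c,t_*)$ and pick $s'\in\mathbb Z$ with $\widetilde x_0+s'\in[e-1,e[$. The image arc has abscissa tending to $-\infty$ as $t\to-\infty$ and to $e$ as $t\to t_0'$. So it crosses $\{\widetilde x_0+s'\}\times\mathbb R$ at some $X=\widetilde f^{q+1}(\widetilde x_0,t_X)$ with $t_X<t_0'$. The graph $\widetilde f(V_c)$ meets that vertical at $\widetilde f(c,\tau)$ with $t_*-\tau<1/k_{tw}$ by \eqref{eqBD03}. Then \eqref{eqBD02} gives $p_2(X)>t_*-1/k_{tw}-Max_{disp}$.

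Hence $\widetilde z'=(\widetilde x_0,t_X)$ and $n_1=N_0+1$ satisfy (1) with $s=s'$ and $\mathrm{pos}>C-|\alpha|-Max_{disp}-1/k_{tw}=6+(1+bound)/k_{tw}>2$. Running the same normalization at time $N_0-1$ instead would keep $n_1=N_0$ and give an inequality of your Step 2 type. It would cost $1/k_{tw}$ plus both an upper and a lower one-step vertical bound, which the given $C$ does not absorb, as the example shows.
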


We are now ready to prove the bounded displacement theorem:

\begin{proof}[Proof of Theorem~\ref{thmBD}]
    Assume, by contradiction, that $f$ has unbounded deviations. Then,  there exist $\widetilde{z}\in [0,1]^2$ and a positive integer 
$n>0$ such that
    \begin{equation}\label{eqBD06}
        \left|p_2\circ\widetilde{f}^n(\widetilde{z})-p_2(\widetilde{z})-n\alpha\right|>C,
    \end{equation}
where $C$ comes from expression (\ref{eqBD05}).

    From expression~\eqref{eqBD06}, there are two cases.
    
    \noindent\textbf{Case 1.} Assume $p_2\circ\widetilde{f}^n(\widetilde{z})-p_2(\widetilde{z})-n\alpha>C.$ By Lemma~\ref{lemBD01} (1), there exist a point $\widehat{w}=(\widehat{x},\widehat{y})\in\mathbf{T}^1\times [0,1]$ and an integer $q\geq 1$ such that
    $$p_2\circ \widehat{f}^q(\widehat{w})-p_2(\widehat{w})\geq q\alpha + 2\quad\text{and}\quad p_1\circ \widehat{f}^q(\widehat{w})=p_1(\widehat{w}).$$
    Hence, for some $s\in\mathbb{Z}$, we have $\widehat{w}\in K(s,q)$.
    By Lemma \ref{lemTRLC04},
    \begin{equation}\label{eqBD07}
        \nu^+(\widehat{x})-\mu^-(\widehat{x})\geq q\alpha + 2 > p > q\alpha,
    \end{equation}
    for some integer $p\in \mathbb{Z}$. Moreover, by Lemma~\ref{lemTRLC03}, $(\widehat{x},\mu^-(\widehat{x}))\in C(s,q)$. So, we have to deal with the two possibilities below.
    \begin{itemize}
        \item There exists a point  $\widehat{z}^*\in C(s,q)$ such that $p_2\circ \widehat{f}^q(\widehat{z}^*)-p_2(\widehat{z}^*)\leq p.$   Since $C(s,q)$ is compact and connected, and the function $p_2\circ \widehat{f}^q(\cdot)-p_2(\cdot)$ is continuous, it follows from~\eqref{eqBD07} that there exists $\widehat{w}^*\in C(s,q)$ such that $\widehat{f}^q(\widehat{w}^*)=\widehat{w}^* + (0,p),$ which implies the existence of a periodic point for $f\in\mathrm{Diff}_{\mathrm{tw}}^1(\mathbf{T}^2)$, a contradiction with the irrationality of $\alpha.$ 
        \item For all $\widehat{z}\in C(s,q)$, we have $p_2\circ\widehat{f}^q(\widehat{z})-p_2(\widehat{z})>p$. Then $\nu^-(\widehat{x})-\mu^+(\widehat{x})>p$ for all $\widehat{x}\in\mathbf{T}^1$. Hence, by Lemma~\ref{lemTRLC05}, there exists a continuous function $\theta:\mathbf{T}^1\to\mathbb{R}$ whose graph separates the sets $\widehat{f}^q(C(s,q))$ and $C(s,q)+(0,p)$. Moreover, $\widehat{f}^q(C(s,q))$ lies strictly above $C(s,q)+(0,p)$. Consequently, $\widehat{f}^q(\mathrm{Graph}(\theta))-(0,p)$ lies above $\mathrm{Graph}(\theta)$. 
This implies that
        $$\liminf_{j\to\infty}\dfrac{p_2\circ\widehat{f}^j(\widehat{z})-p_2(\widehat{z})}{j}\geq\dfrac{p}{q},\quad\text{for all} \ \widehat{z}\in \mathbf{T}^1\times \mathbb{R}.$$
\vskip0.1truecm
        Hence, $\rho_{V}(\widehat{f})\geq \dfrac{p}{q}>\alpha,$ again yielding a contradiction.
    \end{itemize}

    \noindent\textbf{Case 2.} Assume $p_2\circ\widetilde{f}^n(\widetilde{z})-p_2(\widetilde{z})-n\alpha<-C.$ This is treated in a similar way as Case~1.
    \end{proof}

So, we obtain the following Poincar\'{e}-like Classification, whose proof follows immediately from Theorem~\ref{teoSC01} and Theorem~\ref{thmBD}.

\begin{corollary}
\label{corSC01} Let $f\in \mathrm{Diff}_{\mathrm{tw}}^{1}(\mathbf{T}^{2})$
be area-preserving such that $\rho _{V}(\widetilde{f})=\{\alpha \}$ for
some $\alpha \notin \mathbb{Q}$, where $\widetilde{f}\in \mathrm{Diff}_{%
\mathrm{tw}}^{1}(\mathbb{R}^{2})$ is a lift of $f$. Then $f$ is
semi-conjugate to the irrational rotation $R_{\alpha }$ on $\mathbf{T}^{1}$.
\end{corollary}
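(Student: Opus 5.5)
The plan is to chain Theorem~\ref{thmBD} and Theorem~\ref{teoSC01}, after checking that the hypotheses line up. Let $f\in\mathrm{Diff}_{\mathrm{tw}}^1(\mathbf{T}^2)$ be area-preserving with lift $\widetilde{f}$ satisfying $\rho_V(\widetilde{f})=\{\alpha\}$, $\alpha\notin\mathbb{Q}$. Denote by $\widehat{f}=\pi\circ\widetilde{f}\circ\pi^{-1}$ the induced lift to the vertical cylinder. As noted in the definition of the vertical rotation set, $\rho_V(\widehat{f})=\rho_V(\widetilde{f})=\{\alpha\}$. This holds because $p_2$ is unchanged under $\pi$, so the displacement quotients in \eqref{equaI03} coincide for $\widetilde{f}$ and $\widehat{f}$. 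In the same way, $\widehat{f}\in\mathrm{Diff}_{\mathrm{tw}}^1(\mathbf{T}^1\times\mathbb{R})$, and it is area-preserving because $f$ is.

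First, I would apply Theorem~\ref{thmBD} to $\widetilde{f}$. It yields a constant $c>0$ such that
\[
\left|p_2\circ\widetilde{f}^n(\widetilde{z})-p_2(\widetilde{z})-n\alpha\right|\le c
\]
for all $\widetilde{z}\in\mathbb{R}^2$ and $n\in\mathbb{Z}$. If the proof of Theorem~\ref{thmBD} only covers $n>0$, I would obtain negative $n$ by substituting $\widetilde{z}=\widetilde{f}^{-m}(\widetilde{w})$, which gives the same bound for $n=-m$. Since $\pi$ preserves the second coordinate and $\pi\circ\widetilde{f}^n=\widehat{f}^n\circ\pi$, this is exactly the bounded-deviation estimate \eqref{eqSC02} for $\widehat{f}$.

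Second, all hypotheses of Theorem~\ref{teoSC01} now hold for $\widehat{f}$: it is area-preserving, $\rho_V(\widehat{f})=\{\alpha\}$ with $\alpha$ irrational, and it has bounded deviations. That theorem gives a semi-conjugacy $h:\mathbf{T}^2\to\mathbf{T}^1$ with $h\circ f=R_\alpha\circ h$. Concretely, $h$ is the projection of $\widehat{h}$ from \eqref{eqSC09}, and it is well defined on $\mathbf{T}^2$ by \eqref{eqSC10}.

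There is no substantive obstacle. The only points needing care are the bookkeeping between the lifts to the plane and to the cylinder, and the extension of the deviation bound to negative $n$, both handled above.
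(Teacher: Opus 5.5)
Your proposal is correct and is the paper's proof: the paper says only that the corollary follows immediately from Theorem~\ref{thmBD} (bounded deviations) combined with Theorem~\ref{teoSC01} (semi-conjugacy under bounded deviations). Your extra bookkeeping is valid detail the paper leaves implicit, namely passing between the planar and cylindrical lifts and extending the deviation bound from $n>0$ to $n<0$ via $\widetilde{z}=\widetilde{f}^{-m}(\widetilde{w})$.
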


\subsubsection{A Birkhoff-type Invariant Curve Theorem}

In this subsection, we establish a result in the same spirit, adapted to a
different geometric and dynamical setting, as the classical \emph{Birkhoff's
Invariant Curve Theorem}. More precisely, we show that the circloids
constructed in the proof of Theorem \ref{teoSC01}, must be the graphs of
uniformly Lipschitz functions over the first coordinate $\mathbf{T}^1$.

We recall that the family of circloids in the torus, $\{C_{t}\}_{t\in 
\mathbf{T}^{1}}$, 
 is obtained as 
$C_{r {\text{ mod 1}}}=\tau (\widehat{C}_{r})$, where $\widehat{C}_{r}$
comes from the construction described in subsection 2.1.

Our main result here is the following, which fully proves Theorem A.

\begin{theorem}
\label{theoBICT01} Let $\widehat{f}\in \mathrm{Diff}_{\mathrm{tw}}^{1}(%
\mathbf{T}^{1}\times \mathbb{R})$ be a lift of some area preserving twist
map $f$ in the torus, such that $\rho _{V}(\widehat{f})=\{\alpha \}$ for
some $\alpha \notin \mathbb{Q}$. Then every circloid $C_{t}\subset \mathbf{T}%
^{2}$ in the above family is the graph of a Lipschitz function $\phi _{t}%
\colon \mathbf{T}^{1}\to \mathbf{T}^{1}$, and the Lipschitz constant can be
chosen uniformly in the family. Moreover, the family $\{\phi _{t}\}_{t\in 
\mathbf{T}^{1}}$ depends continuously on $t$ in the uniform topology, and it
is actually a foliation of $\mathbf{T}^{2}$.
\end{theorem}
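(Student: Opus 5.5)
The plan is to run Fathi's version of Birkhoff's argument on the open sets $U_r:=\mathcal{U}^-(\widehat{C}_r)$. By \eqref{eqSC08} and the bounds $A_r\subset\mathbf{T}^1\times[r-c,r+c]$, each $U_r$ satisfies (i) and (ii) of \eqref{FTU1}. Property (iii) should follow from the minimality of the circloid: $\widehat{C}_r$ is the common boundary of $\mathcal{U}^{\pm}(\widehat{C}_r)$.

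Moreover $\widehat{f}(U_r)=U_{r+\alpha}$ and $T(U_r)=U_{r+1}$. The goal is to show $R(U_r)=L(U_r)=\emptyset$ for every $r$, so that $U_r=V(U_r)$. Then $U_r$ is the region below the graph of a function, and $\widehat{C}_r=\partial U_r$, being also $\partial\mathcal{U}^+$, is forced to be a graph. The argument is symmetric, so I only sketch how to kill $R(U_r)$.

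\textbf{Step 1 (no right/left components).} Suppose some $R_j(U_r)\neq\emptyset$. By Lemma~\ref{lemFTU01}(3), $\widehat{f}^n(\mathrm{Cl}\,R(U_r))\subset R(U_{r+n\alpha})$ for all $n\geq 1$. I will choose $n_k$ with $n_k\alpha\to 0$ mod $1$ from above, and compose with $T^{-m_k}$, so that $r_k:=r+n_k\alpha-m_k\downarrow r$ and $T^{-m_k}\widehat{f}^{n_k}$ maps $\mathrm{Cl}\,R(U_r)$ into $R(U_{r_k})$. This is the situation of a sequence of circloids converging from above to $\widehat{C}_r$.

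The key claim, which I expect to be the main obstacle, is that the right components below $\widehat{C}_{r_k}$ converge, in a suitable sense (not Hausdorff), into the right components below $\widehat{C}_r$. Concretely, I will show that any point of $U_r$ that lies in $R(U_{r_k})$ for infinitely many $k$ must lie in $\mathrm{Cl}\,R(U_r)$. To prove it I would use that $V(U_{r_k})$ decreases to $V(U_r)$ locally, that the segments $S_i$ bounding right components are vertical, and that points of $V(U_r)$ are reached by vertical segments in $U_r\subset U_{r_k}$.

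Granting this, the region $W:=\mathrm{Cl}\,R(U_r)\cap(\text{below } \widehat{C}_{r}\text{, above }\widehat{C}_{r-\epsilon})$ projects to a set of positive measure in $\mathbf{T}^2$. Iterates of $W$ return close to $W$ via $T^{-m_k}\widehat{f}^{n_k}$, by the recurrence of the rotation together with area preservation and Poincaré recurrence. Combined with Lemma~\ref{lemFTU01}(3) applied strictly (the image lands in $R$, not merely its closure), this yields a measure-theoretic contradiction in the manner of Fathi's Lemmas 7–10: a set of positive finite area mapped strictly into itself. The measure is exactly where area preservation enters.

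\textbf{Step 2 (Lipschitz).} Once every $\widehat{C}_r$ is a graph of some $\widehat{\phi}_r$, I will get a uniform Lipschitz bound from the twist condition. Since $\widehat{f}^{-1}(\mathrm{Graph}\,\widehat{\phi}_{r+\alpha})$ is a graph, the image under $\widehat{f}$ of the graph $\mathrm{Graph}\,\widehat{\phi}_r$ must project injectively to the $\widehat{x}$-axis. Similarly for $\widehat{f}^{-1}$, which is a left twist. The standard Birkhoff cone argument then confines the slopes to a cone determined by $k_{tw}$ and $bound$ from \eqref{eqBD03}–\eqref{eqBD04}. This gives a constant $K$ independent of $r$. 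I will phrase it via Dini derivatives, since $\widehat{\phi}_r$ is only continuous a priori.

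\textbf{Step 3 (foliation).} Graphs for distinct $r$ in $[0,1)$ are disjoint and ordered by \eqref{eqSC08}. Uniform Lipschitz bounds plus Arzelà–Ascoli give limits of $\widehat{\phi}_{r_k}$ that are graphs contained in $\widehat{h}^{-1}(r)$. Lipschitz graphs have empty interior, and $\widehat{h}^{-1}(r)$ is squeezed between $\widehat{C}_{r}$ and the limits from both sides; monotonicity then forces one-sided limits to coincide with $\widehat{\phi}_r$. This yields continuity in $t$ and $\widehat{h}^{-1}(r)=\widehat{C}_r$, so the graphs cover the cylinder. Invariance follows from \eqref{eqSC06} and \eqref{eqSC05}.
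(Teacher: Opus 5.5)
The gap is in Step~1, exactly where you placed the difficulty: your key claim is false, not merely hard to prove.

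\emph{Why the claim fails.} The sets $V(U_{r_k})$ do not shrink to $V(U_r)$. They shrink to $V_\infty=\bigcap_k V(U_{r_k})$. Below $\widehat C_r$, this set consists of the points whose downward vertical ray avoids $\mathcal U^+(\widehat C_r)$, but the ray is allowed to \emph{touch} $\widehat C_r$. So $V_\infty$ can be strictly larger than $\mathrm{Cl}\,V(U_r)$.

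For a concrete case, take a left component $L_0$ of $U_r$ with gate $\{x_0\}\times\,]y_1,y_2[$. Suppose $(x_0,y_2)$ is the tip of a finger of $\mathcal U^+(\widehat C_r)$ entering from the right, so that $L_0$ wraps around this tip. Since $\widehat C_r\subset\mathcal U^-(\widehat C_{r_k})$, the vertical through $x_0$ is no longer blocked at $y_1$ or $y_2$ for the upper circloids. The part of $L_0$ above the finger is then cut off by a new vertical gate and becomes part of a right component of $U_{r_k}$ for all large $k$. This is the situation of Figure~1 and of Case~2 in the proof of Proposition~\ref{areaaa}.

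So points of $L(U_r)$ lie in $R(U_{r_k})$ for all large $k$, and $\lim_k\mathrm{Leb}(R(U_{r_k}))$ can genuinely exceed $\mathrm{Leb}(R(U_r))$. In that case forward iterates together with the strict gain $\delta_1$ of Lemma~\ref{lemTC00} give no contradiction. Nor is there a set ``mapped strictly into itself'': $T^{-m_k}\widehat f^{n_k}$ sends $\mathrm{Cl}\,R(U_r)$ into the different set $R(U_{r_k})$, and recurrence does not bridge the two.

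\emph{The missing idea: use backward iterates as well.} The paper shows (Proposition~\ref{areaaa}) that $\lambda_R=\lim_n\mathrm{Leb}(R(\widehat C_{s_n}))$ exists and is the same for every monotone sequence $s_n\to r^+$. It equals the area of the right components of $U_r\setminus V_\infty^-$, each of which has a single gate. The proof uses $\mathrm{Leb}([\widehat C_r,\widehat C_{s_n}])\to0$ from Lemma~\ref{lemTC03}. The argument then splits into two cases.
\begin{itemize}
\item If $\lambda_R\le\mathrm{Leb}(R(U_r))$, the forward translates $\widehat f^{n}(\widehat C_r)+(0,p)$ approaching from above contradict the gain $\delta_1$.
\item If $\lambda_R>\mathrm{Leb}(R(U_r))$, use the backward translates $\widehat f^{-n}(\widehat C_r)+(0,p)$, which also approach $\widehat C_r$ from above. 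Lemma~\ref{lemFTU01}(3) gives $\mathrm{Leb}(R(U_r))\ge\mathrm{Leb}(R(\widehat f^{-n}(\widehat C_r)+(0,p)))\to\lambda_R$, a contradiction.
\end{itemize}

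Three smaller repairs are also needed.
\begin{itemize}
\item Minimality does not make a circloid a cofrontier; Lakes-of-Wada type circloids have interior. You need $\mathrm{Leb}(\widehat h^{-1}(r))=0$, which follows from area preservation because $h^{-1}(r \bmod 1)$ is wandering. This is needed both for (iii) of \eqref{FTU1} and for $\widehat C_r=\partial U_r$.
\item Knowing $U_r=V(U_r)$ does not force $\widehat C_r$ to be a graph, since vertical segments can remain. These must be excluded by the twist condition, as in Part~1 of Lemma~\ref{lemTC07}.
\item In Step~3, the one-sided limits equal $\widehat\phi_r$ because the region between them lies in the null set $\widehat h^{-1}(r)$. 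The fact that Lipschitz graphs have empty interior is not what gives this.
\end{itemize}
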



\paragraph{Properties of circloids.}

For any $r\in \mathbb{R}$, let $\widehat{C}_{r}\subset \mathbf{T}^{1}\times %
\mathbb{R}$ be the circloid that comes from the construction in the proof of
Theorem~\ref{teoSC01} of subsection 2.1. Recall that $\mathcal{U}^{-}(%
\widehat{C}_{r})$ and $\mathcal{U}^{+}(\widehat{C}_{r})$ are the connected
components of the complement of $\widehat{C}_{r},$ respectively containing
the lower and upper ends of $\mathbf{T}^{1}\times \mathbb{R}$. Remember that
the family of all these circloids satisfy the properties in expressions (\ref
{eqSC05}), (\ref{eqSC06}) and (\ref{eqSC08}), that we
restate below:

\begin{itemize}
\item  $\widehat{C}_{r}+(0,1)=\widehat{C}_{r+1};$

\item  $\widehat{f}(\widehat{C}_{r})=\widehat{C}_{r+\alpha };$

\item  Disjoint property: if $s<r$, then $\widehat{C}_{s}\prec \widehat{C}%
_{r} \Leftrightarrow \widehat{C}_{r}\subset \mathcal{U}^{+}(\widehat{C}_{s});
$
\end{itemize}

Clearly, $\mathcal{U}^-(\widehat{C}_r)$ is homeomorphic to $\mathbf{T}%
^1\times\mathbb{R}$. From the fact that $\widehat{C}_r$ is contained in $A_r$
(see expression (\ref{eqSC03})), there exist real numbers $a_r=r-C<b_r=r+C,$
where $C>0$ comes from expression (\ref{eqBD05}),
such that $\widehat{C}_r$ is contained in $\mathbf{T}^1\times [a_r,b_r]$.

On the other hand, we have the following useful proposition:

\begin{proposition}
\label{areazero} For any $r\in \mathbb{R},$ both $\widehat{h}^{-1}(r)$ and $h^{-1}(r$ mod $1)$ 
have zero Lebesgue measure. The same happens for $\widehat{C}_r$ and 
$C_{r\text{ mod 1}}=\tau (\widehat{C}_{r}).$ In particular, $\widehat{C}_r$ is a cofrontier. 
\end{proposition}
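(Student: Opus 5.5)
The idea is to use area preservation together with the uncountable family of level sets of $\widehat{h}$, which are pairwise disjoint. First I would show that $\widehat{h}^{-1}(r)$ is compact. By (\ref{eqSC10}) and continuity, $\widehat{h}^{-1}(r)$ is closed. It is bounded because $\widehat{h}(\widehat{z})$ differs from $p_2(\widehat{z})$ by a uniformly bounded amount. Indeed, $\widehat{C}_r\subset \mathbf{T}^1\times[r-C,r+C]$, so a point with $p_2(\widehat{z})>r+C$ lies in $\mathcal{U}^+(\widehat{C}_r)$; hence $\widehat{h}(\widehat{z})\ge r$, and symmetrically from below. In particular all the level sets $\widehat{h}^{-1}(r)$, $r\in[0,1]$, lie in a fixed compact strip $\mathbf{T}^1\times[-C',1+C']$. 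Being pairwise disjoint measurable sets of a finite-measure region, at most countably many of them can have positive Lebesgue measure. So the set $P=\{r: Leb(\widehat{h}^{-1}(r))>0\}$ is countable.

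Next I would use the dynamics to spread positive measure along an orbit. By (\ref{eqSC10}), $\widehat{f}^n(\widehat{h}^{-1}(r))=\widehat{h}^{-1}(r+n\alpha)$. Since $\widehat{f}$ preserves Lebesgue measure (it lifts an area-preserving $f$, so it preserves Lebesgue on the cylinder), we get $Leb(\widehat{h}^{-1}(r+n\alpha))=Leb(\widehat{h}^{-1}(r))$. Also $T$ preserves measure, and $T(\widehat{h}^{-1}(r))=\widehat{h}^{-1}(r+1)$. Hence if $r\in P$, then all the values $r+n\alpha+m$ lie in $P$ with the same measure $\delta>0$. Reducing mod 1, the infinitely many distinct points $r+n\alpha \bmod 1$ (using $\alpha\notin\mathbb{Q}$) give infinitely many disjoint sets $T^{-m_n}\widehat{h}^{-1}(r+n\alpha)=\widehat{h}^{-1}(\{r+n\alpha\})$ of measure $\delta$ inside the fixed strip, which is a contradiction. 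Thus $P=\emptyset$. On the torus, $h^{-1}(r \bmod 1)=\tau(\widehat{h}^{-1}(r))$, and $\tau$ is locally measure-preserving and countable-to-one, so $h^{-1}(r\bmod 1)$ also has zero measure. By (\ref{eqSC11}), $\widehat{C}_r\subset\widehat{h}^{-1}(r)$, so $\widehat{C}_r$ and $C_{r \bmod 1}$ have zero measure too.

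Finally comes the cofrontier claim, meaning that $\widehat{C}_r$ has empty interior and is the common boundary of $\mathcal{U}^\pm(\widehat{C}_r)$. A set of zero measure has empty interior. Minimality of the circloid then gives $\widehat{C}_r=\partial\mathcal{U}^+=\partial\mathcal{U}^-$. Indeed, $\partial\mathcal{U}^+(\widehat{C}_r)\subset\widehat{C}_r$ is essential and separates, and the filled set it bounds is again a continuum satisfying properties 1 and 2, so minimality forces equality with $\widehat{C}_r$. Since $\widehat{C}_r$ has empty interior, it coincides with the boundary of each complementary component. This last topological step, verifying that the boundary of $\mathcal{U}^+$ (together with the components it encloses) is an admissible competitor in the minimality, is where I expect the only real care to be needed. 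I would try first to invoke the standard fact from \cite{jager2009linearization} that a circloid with empty interior is a cofrontier.
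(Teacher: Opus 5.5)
Your proof is correct and is essentially the paper's argument. The level set is wandering, because its iterates are the pairwise disjoint level sets over the orbit $r+n\alpha$ with $\alpha\notin\mathbb{Q}$, and an invariant finite measure then forces it to be null. The paper applies Poincar\'e recurrence to $h^{-1}(r \bmod 1)$ directly on $\mathbf{T}^2$, where the measure is automatically finite, so your strip bound and your countability step are unnecessary; the latter is never actually used. Your argument that a circloid with empty interior is a cofrontier, via minimality applied to the filled boundary of $\mathcal{U}^{+}(\widehat{C}_r)$, is sound and supplies a step the paper only asserts.
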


\begin{proof}

The circloid $C_{r\text{ mod 1}}$ is
contained in $h^{-1}(r$ mod $1)$, where $h$ is the semi-conjugacy between $f$
and $R_{\alpha }$. As $h^{-1}(r$ mod 1$)$ is wandering for $f$, the
Poincar\'{e} Recurrence Theorem says that $Leb(h^{-1}(r$ mod 1$))=0$,
which implies that both, $C_{r\text{ mod 1}}$
and $\widehat{C}_{r}$ have no interior. So, 
$\widehat{C}_{r}$ 
is actually a \emph{cofrontier,} that is 
$$
\partial _{\mathbf{T}^{1}\times \mathbb{R}}\mathcal{U}^{-}(\widehat{C}_{r})=\widehat{C}_{r}=
\partial _{\mathbf{T}^{1}\times \mathbb{R}}\mathcal{U}^{+}(\widehat{C}_{r}).
$$ 
\end{proof}

Hence, $\mathcal{U}^{-}(\widehat{C}_{r})$ satisfies
condition~\eqref{FTU1}.

Now, from~\eqref{FTU3} and~\eqref{FTU8}, for simplicity, we write 
\[
L(\mathcal{U}^-(\widehat{C}_r)):=L(\widehat{C}_r),\quad R(\mathcal{U}^-( 
\widehat{C}_r)):=R(\widehat{C}_r),\quad\text{and}\quad V(\mathcal{U}^-( 
\widehat{C}_r)):=V(\widehat{C}_r). 
\]
Moreover, by~\eqref{FTU10}, we have the decomposition 
\begin{equation}  \label{eq.TC01}
\mathcal{U}^-(\widehat{C}_r)=V(\widehat{C}_r)\cup\mathrm{Cl}_{\mathcal{U}^-(%
\widehat{C}_r)}L(\widehat{C}_r)\cup\mathrm{Cl}_{\mathcal{U}^-(\widehat{C}%
_r)}R(\widehat{C}_r).
\end{equation}
From~\eqref{eqSC05}, we have $\mathcal{U}^{-}(\widehat{C}_{r+1})=\mathcal{U}%
^-(\widehat{C}_r)+(0,1)$. In particular, for every $q\in\mathbb{Z}$, 
\begin{equation}  \label{eq.TC02}
L(\widehat{C}_r+(0,q))=L(\widehat{C}_r)+(0,q)\quad\text{and}\quad R(\widehat{%
C}_r+(0,q))=R(\widehat{C}_r)+(0,q).
\end{equation}

Before beginning the proof of Theorem~\ref{theoBICT01}, we establish some
auxiliary results.

\begin{lemma}
\label{lemTC00} In case $R(\widehat{C}_{r})$ and $L(\widehat{C}_{r})$ are
not empty, there exist $\delta _{1},\delta _{2}>0$ such that, for each $n\in %
\mathbb{N}$,

\begin{enumerate}
\item[$(1)$]  $\mathrm{Leb}_{\mathbf{T}^{1}\times \mathbb{R}}(R(\widehat{f}%
^{n}(\widehat{C}_{r})))>\mathrm{Leb}_{\mathbf{T}^{1}\times \mathbb{R}}(R(%
\widehat{C}_{r}))+\delta _{1},$

\item[$(2)$]  $\mathrm{Leb}_{\mathbf{T}^{1}\times \mathbb{R}}(L(\widehat{f}%
^{-n}(\widehat{C}_{r}))>\mathrm{Leb}_{\mathbf{T}^{1}\times \mathbb{R}}(L(%
\widehat{C}_{r}))+\delta _{2}.$
\end{enumerate}
\end{lemma}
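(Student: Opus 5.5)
The approach is Fathi's area argument. We use Lemma~\ref{lemFTU01} with $U=\mathcal{U}^-(\widehat{C}_r)$, which satisfies \eqref{FTU1} by Proposition~\ref{areazero}, and the fact that $\widehat{f}$ preserves area, so $\widehat{f}(U)=\mathcal{U}^-(\widehat{f}(\widehat{C}_r))=\mathcal{U}^-(\widehat{C}_{r+\alpha})$.

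\textbf{Step 1: one step.} By Lemma~\ref{lemFTU01}(3),
\[
\widehat{f}\bigl(\mathrm{Cl}_U R(\widehat{C}_r)\bigr)\subset R(\widehat{f}(\widehat{C}_r)).
\]
Area preservation then gives $\mathrm{Leb}(R(\widehat{f}(\widehat{C}_r)))\ge \mathrm{Leb}(\mathrm{Cl}_U R(\widehat{C}_r))$. Each $\mathrm{Cl}_U R_j=R_j\cup S_j$, and the segments have zero area, so this alone only yields $\ge$. To get a strict gain of fixed size, I will show that some open set of positive area lies in $R(\widehat{f}(\widehat{C}_r))\setminus \widehat{f}(R(\widehat{C}_r))$.

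Take a component $R_j(\widehat{C}_r)$ with boundary segment $S_j=\{\widehat{x}_j\}\times]\widehat{y}_{j,1},\widehat{y}_{j,2}[$. By \eqref{FTU4}, points just to the left of $S_j$ lie in $V(\widehat{C}_r)$. Their images under $\widehat{f}$ lie in $\widehat{f}(U)$, and by Lemma~\ref{lemFTU01}(1) they are not in $\mathrm{Cl}\,L(\widehat{f}(U))$. The image $\widehat{f}(S_j)$ is an arc contained in $\mathrm{Cl}\,R(\widehat{f}(U))$ that is a graph over $x$ tilted by the twist; it is not a vertical segment. It therefore cannot be one of the boundary segments of $R(\widehat{f}(U))$, so it meets the interior $R(\widehat{f}(U))$. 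A small neighborhood of a point of $\widehat{f}(S_j)$ in $R(\widehat{f}(U))$ contains points on the $\widehat{f}(V)$ side, that is, images of points of $V(\widehat{C}_r)$ that land in $R(\widehat{f}(U))$. This gives an open set $W\subset V(\widehat{C}_r)$ near $S_j$ with $\widehat{f}(W)\subset R(\widehat{f}(\widehat{C}_r))$, disjoint from $\widehat{f}(R(\widehat{C}_r))$. Hence
\[
\mathrm{Leb}(R(\widehat{f}(\widehat{C}_r)))\ge \mathrm{Leb}(R(\widehat{C}_r))+\mathrm{Leb}(W).
\]
Set $\delta_1:=\mathrm{Leb}(W)>0$.

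\textbf{Step 2: iterate.} For $n\ge 1$, apply (3) repeatedly to $U_k=\widehat{f}^k(U)$, all of which satisfy \eqref{FTU1}:
\[
\widehat{f}^{n-1}\bigl(R(\widehat{f}(\widehat{C}_r))\bigr)\subset R(\widehat{f}^n(\widehat{C}_r)).
\]
Together with Step 1 and invariance of Lebesgue measure, this gives $\mathrm{Leb}(R(\widehat{f}^n(\widehat{C}_r)))\ge \mathrm{Leb}(R(\widehat{f}(\widehat{C}_r)))>\mathrm{Leb}(R(\widehat{C}_r))+\delta_1/2$, uniformly in $n$. The measures involved are finite because $U\setminus V(U)\subset \mathbf{T}^1\times[a_r,b_r]$. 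Part (2) is symmetric, using $\widehat{f}^{-1}$ and items (2) and (4) of Lemma~\ref{lemFTU01}.

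\textbf{Main obstacle.} The hardest point is Step 1: showing that $\widehat{f}(S_j)$ really enters the interior of $R(\widehat{f}(U))$ and picks up genuinely new area from $V$. Here the twist condition \eqref{eqBD03} is essential. $\widehat{f}(S_j)$ is transverse to the vertical, and $\widehat{f}(\{\widehat{x}_j\}\times]-\infty,\widehat{y}_{j,2}[)$ projects injectively onto the $x$-axis, moving rightward as $y$ increases. So the lower part of the image of the vertical line through $S_j$ lies in $\widehat{f}(V)$, and its upper part bounds $\widehat{f}(R_j)$ from the left. Consider the vertical segments in $\widehat{f}(U)$ hanging below points of $\widehat{f}(S_j)$. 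Each such segment crosses $\widehat{f}$ of the vertical line below $S_j$ at most once, by injectivity of the projection. From this one can argue that a wedge between these two curves lies in $R(\widehat{f}(U))$ but outside $\widehat{f}(\mathrm{Cl}\,R)$, and this wedge is the set $\widehat{f}(W)$. I expect this local picture to be the delicate part. If it resists, a fallback is to use Fathi's Lemma~10-type argument directly: compare $\widehat{f}(V(U))$ with $V(\widehat{f}(U))$ and show that their symmetric difference has positive area.
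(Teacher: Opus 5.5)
Your proposal is correct and follows essentially the same route as the paper. A small neighbourhood of a point of a gate $S_j$ meets $V(\widehat{C}_r)$, by \eqref{FTU4}, in an open set of positive area that is disjoint from $\mathrm{Cl}_U R(\widehat{C}_r)$ and whose image lands in $R(\widehat{f}(\widehat{C}_r))$ alongside $\widehat{f}(R(\widehat{C}_r))$; iterating item (3) of Lemma~\ref{lemFTU01} then gives uniformity in $n$. The twist-condition detour you flag as the \emph{main obstacle} is unnecessary: item (3) already places $\widehat{f}(S_j)\subset\widehat{f}(\mathrm{Cl}_U R(U))$ inside the open set $R(\widehat{f}(U))$, so continuity of $\widehat{f}$ alone gives a ball around any point of $S_j$ mapped into $R(\widehat{f}(U))$, exactly as in the paper.
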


\begin{proof}
    As both cases are analogous, let us consider only the first. 
By Lemma~\ref{lemFTU01} and expression~\eqref{FTU9}, for each $j\in J\subseteq\mathbb{N}$ there exists $j'\in J'\subseteq\mathbb{N}$ such that
    \begin{equation}\label{FTU11}
        \widehat{f}(\mathrm{Cl}_{\mathcal{U}^-(\widehat{C}_r)}R_j(\widehat{C}_r))\subset R_{j'}(\widehat{f}(\widehat{C}_r)).
    \end{equation}
    Fixed some $j_0\in J$, since $\mathrm{Cl}_{\mathcal{U}^-(\widehat{C}_r)}R_{j_0}(\widehat{C}_r)=R_{j_0}(\widehat{C}_r)\cup S_{j_0}$, 
let $\widehat{z}\in S_{j_0}$. Because $\widehat{f}$ is continuous and $R_{j_0'}(\widehat{f}
(\widehat{C}_r))$ is open in $\widehat{f}(\mathcal{U}^-(\widehat{C}_r))$, it follows from~\eqref{FTU11} that there exists an open ball $B$ centered at $\widehat{z}$ such that $\widehat{f}(B)\subset R_{j_0'}
(\widehat{f}(\widehat{C}_r))$. By~\eqref{FTU4}, we have $V(\widehat{C}_r)\cap B\neq\emptyset.$ Define  $B':=V(\widehat{C}_r)\cap B$ and set  $\mathrm{Leb}(B'):=\delta_1>0$. Observe that $B'$ is disjoint 
from $R_{j_0}(\widehat{C}_r)$, hence $\widehat{f}(R_{j_0}(\widehat{C}_r))\cap \widehat{f}(B')=\emptyset.$ Therefore, since $\widehat{f}(B')\cup\widehat{f}(R_{j_0}(\widehat{C}_r))\subset R_{j_0'}(\widehat{f}
(\widehat{C}_r))$ and $\widehat{f}$ preserves Lebesgue, we obtain that
    $$\mathrm{Leb}_{\mathbf{T}^1\times\mathbb{R}}(R_{j_0'}(\widehat{f}(\widehat{C}_r)))>\mathrm{Leb}_{\mathbf{T}^1\times\mathbb{R}}(R_{j_0}(\widehat{C}_r))+\delta_1.$$
The above inequality, together with expression~\eqref{FTU8} and Lemma \ref{lemFTU01} item (3), imply that
    $$\mathrm{Leb}_{\mathbf{T}^1\times\mathbb{R}}(R(\widehat{f}(\widehat{C}_r)))>\mathrm{Leb}_{\mathbf{T}^1\times\mathbb{R}}(R(\widehat{C}_r))+\delta_1.$$

By an iterated use of Lemma \ref{lemFTU01} item(3), we obtain that
$$
\widehat{f}^{n-1}(R(\widehat{f}(\widehat{C}_r)))\subset R(\widehat{f}^n(\widehat{C}_r)),
$$
which concludes the proof of the first case.
\end{proof}

\begin{lemma}
\label{lemTC01} Let $\widehat{C}_{s},\widehat{C}_{r}\subset \mathbf{T}%
^{1}\times \mathbb{R}$ be circloids given as above, and suppose that $s<r$.
Then we have $\mathrm{Cl}_{\mathbf{T}^{1}\times \mathbb{R}}V(\widehat{C}%
_{s})\subset V(\widehat{C}_{r})$.
\end{lemma}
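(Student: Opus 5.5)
We need to show $\mathrm{Cl}\,V(\widehat{C}_s)\subset V(\widehat{C}_r)$ for $s<r$. The plan is to first prove that the closure of $V(\widehat{C}_s)$ lies in $\mathrm{Cl}\,\mathcal{U}^-(\widehat{C}_s)=\mathcal{U}^-(\widehat{C}_s)\cup\widehat{C}_s$, which is contained in $\mathcal{U}^-(\widehat{C}_r)$. Then we show that every such closure point is reachable from below by a vertical segment inside $\mathcal{U}^-(\widehat{C}_r)$.

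\emph{Step 1: the closure lies below $\widehat{C}_r$.} By the disjoint property \eqref{eqSC08}, $\widehat{C}_r\subset\mathcal{U}^+(\widehat{C}_s)$. The set $\mathcal{U}^-(\widehat{C}_s)\cup\widehat{C}_s$ is connected, contains the lower end and misses $\widehat{C}_r$, so it lies in $\mathcal{U}^-(\widehat{C}_r)$. Since $V(\widehat{C}_s)\subset\mathcal{U}^-(\widehat{C}_s)$, its closure in $\mathbf{T}^1\times\mathbb{R}$ lies in $\mathcal{U}^-(\widehat{C}_s)\cup\widehat{C}_s\subset\mathcal{U}^-(\widehat{C}_r)$.

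\emph{Step 2: verticality.} Let $\widehat{w}=(\widehat{x},\widehat{y})\in\mathrm{Cl}\,V(\widehat{C}_s)$, and take $\widehat{w}_n=(\widehat{x}_n,\widehat{y}_n)\in V(\widehat{C}_s)$ with $\widehat{w}_n\to\widehat{w}$. By definition each closed vertical segment $\{\widehat{x}_n\}\times]-\infty,\widehat{y}_n]$ lies in $V(\widehat{C}_s)$, since every point below a point of $V$ is again in $V$. Given $t\le\widehat{y}$, the points $(\widehat{x}_n,\min(t,\widehat{y}_n))$ belong to $V(\widehat{C}_s)$ and converge to $(\widehat{x},t)$. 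Hence the whole segment $\{\widehat{x}\}\times]-\infty,\widehat{y}]$ lies in $\mathrm{Cl}\,V(\widehat{C}_s)$, and by Step 1 it lies in $\mathcal{U}^-(\widehat{C}_r)$. This is exactly the membership condition in \eqref{FTU2} for $V(\mathcal{U}^-(\widehat{C}_r))$, so $\widehat{w}\in V(\widehat{C}_r)$.

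The only delicate point is Step 1: closure points of $V(\widehat{C}_s)$ may lie on $\widehat{C}_s$ itself, so strict separation of the circloids is essential. This is precisely what \eqref{eqSC08} gives. Proposition~\ref{areazero} (cofrontier) is not needed. We also need the closure to be taken in $\mathbf{T}^1\times\mathbb{R}$, and since the set is bounded above by $b_s$, no issue arises at infinity.
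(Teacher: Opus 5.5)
Your proof is correct and follows the paper's argument. The paper also shows $\mathrm{Cl}_{\mathbf{T}^1\times\mathbb{R}}V(\widehat{C}_s)\subset\mathcal{U}^-(\widehat{C}_s)\cup\widehat{C}_s\subset\mathcal{U}^-(\widehat{C}_r)$ and then concludes immediately; it leaves your Step 2 (the closure contains the downward vertical ray below each of its points) implicit, and only spells that argument out later at \eqref{ddd}.

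One small point: the equality $\mathrm{Cl}\,\mathcal{U}^-(\widehat{C}_s)=\mathcal{U}^-(\widehat{C}_s)\cup\widehat{C}_s$ in your plan does use the cofrontier property of Proposition~\ref{areazero}, since circloids can have interior. Your argument only needs the inclusion $\subset$, which holds because $\mathcal{U}^+(\widehat{C}_s)$ is open.
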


\begin{proof}
    As $\widehat{C}_{s}\cup\mathcal{U}^-(\widehat{C}_{s})\subset \mathcal{U}^-(\widehat{C}_r)$ and $V(\widehat{C}_s)\subset \mathcal{U}^-(\widehat{C}_{s}),$ clearly 
$$
\mathrm{Cl}_{\mathbf{T}^{1}\times \mathbb{R}}V(\widehat{C}%
_{s})\subset 
\mathcal{U}^-(\widehat{C}_r).
$$ 
So, $\mathrm{Cl}_{\mathbf{T}^{1}\times \mathbb{R}}V(\widehat{C}%
_{s})\subset V(\widehat{C}_{r}).$
\end{proof}

\begin{lemma}
\label{lemTC03} Let $\{s_{n}\}_{n\in \mathbb{N}}$ be a
sequence converging to some $r\in \mathbb{R}$ monotonically from below,
that is, $s_{n}\rightarrow r^{-}$. Then 
\[
\lim_{n\to \infty }\mathrm{Leb}_{\mathbf{T}^{1}\times \mathbb{R}}\left(
[\widehat{C}_{s_{n}},\widehat{C}_{r}]\right) =0.
\]
Analogously, if $s_{n}\to r^{+},$ that is, monotonically from above, we have 
\[
\lim_{n\to \infty }\mathrm{Leb}_{\mathbf{T}^{1}\times \mathbb{R}}\left(
[\widehat{C}_{r},\widehat{C}_{s_{n}}]\right) =0.
\]
\end{lemma}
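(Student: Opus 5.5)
The plan is to use continuity of measure along the nested family of closed regions, and to identify the limiting intersection with a set already known to have measure zero. Suppose $s_n\to r^-$ monotonically. By the disjoint property \eqref{eqSC08}, $s_n<s_{n+1}<r$ gives $\widehat{C}_{s_n}\prec\widehat{C}_{s_{n+1}}\prec \widehat{C}_r$. Hence $\mathcal{U}^+(\widehat{C}_{s_{n+1}})\subset \mathcal{U}^+(\widehat{C}_{s_n})$, and the closed regions $[\widehat{C}_{s_n},\widehat{C}_r]$ form a decreasing sequence. These regions lie inside $\mathbf{T}^1\times[a_{s_1},b_r]$, so they are compact and have finite Lebesgue measure. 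Continuity of measure from above then gives
$$\lim_{n\to\infty}\mathrm{Leb}\left([\widehat{C}_{s_n},\widehat{C}_r]\right)=\mathrm{Leb}\Big(\bigcap_n [\widehat{C}_{s_n},\widehat{C}_r]\Big).$$

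The key step is to show that $\bigcap_n [\widehat{C}_{s_n},\widehat{C}_r]\subset \widehat{h}^{-1}(r)$. Take $\widehat{z}$ in the intersection. First, $\widehat{z}\notin\mathcal{U}^+(\widehat{C}_r)$. For any $r'>r$ we have $\mathcal{U}^+(\widehat{C}_{r'})\subset\mathcal{U}^+(\widehat{C}_r)$, so $\widehat{z}\notin \mathcal{U}^+(\widehat{C}_{r'})$ for all $r'\geq r$, and by definition \eqref{eqSC09} this gives $\widehat{h}(\widehat{z})\le r$. Second, for each $n$ we have $\widehat{z}\in\widehat{C}_{s_n}\cup\mathcal{U}^+(\widehat{C}_{s_n})$, and hence $\widehat{z}\in \mathcal{U}^+(\widehat{C}_{s_{n-1}})$, because $\widehat{C}_{s_n}\subset\mathcal{U}^+(\widehat{C}_{s_{n-1}})$ and $\mathcal{U}^+(\widehat{C}_{s_n})\subset\mathcal{U}^+(\widehat{C}_{s_{n-1}})$. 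Therefore $\widehat{h}(\widehat{z})\ge s_{n-1}$ for every $n$, so $\widehat{h}(\widehat{z})\ge r$. Together these give $\widehat{h}(\widehat{z})=r$. Proposition \ref{areazero} says $\mathrm{Leb}(\widehat{h}^{-1}(r))=0$, and the first case follows.

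The case $s_n\to r^+$ is symmetric. The regions $[\widehat{C}_r,\widehat{C}_{s_n}]$ decrease, since $\mathcal{U}^-(\widehat{C}_{s_{n+1}})\subset\mathcal{U}^-(\widehat{C}_{s_n})$. A point $\widehat{z}$ in the intersection is not in $\mathcal{U}^+(\widehat{C}_{s_n})$ for any $n$, so $\widehat{h}(\widehat{z})\le \inf_n s_n=r$. It lies in $\widehat{C}_r\cup\mathcal{U}^+(\widehat{C}_r)$, so it lies in $\mathcal{U}^+(\widehat{C}_{r'})$ for every $r'<r$, and hence $\widehat{h}(\widehat{z})\ge r$. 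Thus the intersection again lies in $\widehat{h}^{-1}(r)$ and has measure zero.

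The only delicate point is the inclusion bookkeeping: checking that $\mathcal{U}^\pm$ are monotone in the index, and that a point lying on $\widehat{C}_{s_n}$ itself still belongs to $\mathcal{U}^+$ of the earlier circloids. Both facts follow from \eqref{eqSC08} together with the connectedness of the complementary components. Once these are in place, the measure-zero statement of Proposition \ref{areazero} does all the remaining work.
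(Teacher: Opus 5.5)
Your proposal is correct and follows essentially the same route as the paper: nested closed regions between the circloids, continuity of Lebesgue measure from above, and the inclusion of the intersection in $\widehat{h}^{-1}(r)$, which has measure zero by Proposition~\ref{areazero}. The only difference is that you derive $\widehat{h}(\widehat{z})=r$ directly from the supremum definition \eqref{eqSC09}, which makes the paper's brief connectedness remark explicit.
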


\begin{proof}
Let $\{s_n\}_{n\in \mathbb{N}}$ be such that 
$s_n\to r^-$. By construction, 
$\widehat{C}_{s_n}\subseteq \widehat{h}^{-1}(s_n)$ and  
$\widehat{C}_r\subseteq \widehat{h}^{-1}(r).$ 

For each $n\in\mathbb{N}$, define 
the open horizontal band $B_n\subset \mathbf{T}^1\times \mathbb{R}$ as the region 
between the sets $\widehat{C}_{s_n}$ and $\widehat{C}_r$. 
Note that $B_{n+1}\subset B_n$ and 
$\mathrm{Cl}_{\mathbf{T}^1 \times \mathbb{R}}B_n\subset B_n\cup \widehat{h}^{-1}(r)\cup \widehat{h}^{-1}(s_{n}).$

    We claim that $$\bigcap_{n\in\mathbb{N}}\mathrm{Cl}_{\mathbf{T}^1 \times \mathbb{R}}B_n\subset \widehat{h}^{-1}(r).$$
    Indeed, let $\widehat{z}\in \bigcap_{n\in\mathbb{N}}\mathrm{Cl}_{\mathbf{T}^1\times \mathbb{R}}B_n$. Then $\widehat{z}\in \mathrm{Cl}_{\mathbf{T}^1\times \mathbb{R}}B_n$ for all $n\in\mathbb{N}$. Since each $B_n$ is connected, it follows that $s_{n}\leq \widehat{h}(\widehat{z})\leq r$ for all $n\in\mathbb{N}$. Letting $n\to\infty$, we obtain that $\widehat{h}(\widehat{z})=r$, and the claim is proved.

    Consequently, by Proposition \ref{areazero} and the continuity from above of the Lebesgue measure,
    $$\lim_{n\to\infty}\mathrm{Leb}_{\mathbf{T}^1\times \mathbb{R}}\left(\mathrm{Cl}_{\mathbf{T}^1\times \mathbb{R}}B_n\right)=\mathrm{Leb}_{\mathbf{T}^1\times \mathbb{R}}\left(\bigcap_{n\in\mathbb{N}}\mathrm{Cl}_{\mathbf{T}^1\times \mathbb{R}}B_n\right)\leq \mathrm{Leb}_{\mathbf{T}^1\times \mathbb{R}}(\widehat{h}^{-1}(r))=0.$$
    Therefore, 
    $$\lim_{n\to\infty}\mathrm{Leb}_{\mathbf{T}^1\times \mathbb{R}}\left([\widehat{C}_{s_{n}},\widehat{C}_r]\right)=0.$$

    The proof of the remaining case is entirely analogous.
\end{proof}

The next lemma almost concludes the proof of the main result of this
subsection:

\begin{lemma}
\label{lemacorrec} For any circloid $\widehat{C}_{r}$ as before,
the open sets $R(\widehat{C}_{r})$ and $L(\widehat{C}_{r})$ are both empty.
\end{lemma}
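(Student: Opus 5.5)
We argue by contradiction, using Lemma~\ref{lemTC00} together with the irrationality of $\alpha$ to send the circloid $\widehat{C}_r$ back close to itself. We treat $R(\widehat{C}_r)$; the case of $L(\widehat{C}_r)$ is symmetric, with $\widehat{f}^{-1}$ in place of $\widehat{f}$ and item (2) of Lemma~\ref{lemTC00} in place of item (1).

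\textbf{Step 1: measure gain along returns.} Suppose $R(\widehat{C}_r)\neq\emptyset$, and let $\delta_1>0$ be given by Lemma~\ref{lemTC00}(1). If the proof of that lemma really needs $L(\widehat{C}_r)\neq\emptyset$ as well, we first dispose of that case; only the right-hand part of the argument is used for $R$. Since $\alpha\notin\mathbb{Q}$, we choose $n_k\to\infty$ and $m_k\in\mathbb{Z}$ such that $s_k:=r+n_k\alpha-m_k$ decreases to $r$. By \eqref{eqSC05}, \eqref{eqSC06} and \eqref{eq.TC02},
$$
R(\widehat{C}_{s_k})=R\bigl(\widehat{f}^{n_k}(\widehat{C}_r)\bigr)-(0,m_k),
$$
and vertical translation preserves Lebesgue measure. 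Hence
$$
\mathrm{Leb}\bigl(R(\widehat{C}_{s_k})\bigr)\ \geq\ \mathrm{Leb}\bigl(R(\widehat{C}_r)\bigr)+\delta_1 \quad\text{for all } k. \tag{$\ast$}
$$

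\textbf{Step 2: upper semicontinuity from above.} The goal is to show
$$
\limsup_{k}\mathrm{Leb}\bigl(R(\widehat{C}_{s_k})\bigr)\leq \mathrm{Leb}\bigl(R(\widehat{C}_r)\bigr),
$$
which contradicts $(\ast)$. By Lemma~\ref{lemTC01}, $\mathrm{Cl}\,V(\widehat{C}_r)\subset V(\widehat{C}_{s_k})$. By \eqref{FTU5} and \eqref{FTU10}, $R(\widehat{C}_{s_k})$ is disjoint from $V(\widehat{C}_{s_k})$. Therefore
$$
R(\widehat{C}_{s_k})\subset \bigl(\mathcal{U}^-(\widehat{C}_r)\setminus V(\widehat{C}_r)\bigr)\cup[\widehat{C}_r,\widehat{C}_{s_k}].
$$
By Lemma~\ref{lemTC03}, the band $[\widehat{C}_r,\widehat{C}_{s_k}]$ has measure tending to $0$. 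Combined with \eqref{FTU10}, this reduces the problem to showing
$$
R(\widehat{C}_{s_k})\cap \mathrm{Cl}\,L(\widehat{C}_r)=\emptyset,
$$
that is, right components of $\widehat{C}_{s_k}$ cannot swallow left components of $\widehat{C}_r$. This is the main obstacle; it is the convergence property ``to something else'' announced in the introduction.

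\textbf{Step 3: no right component of $\widehat{C}_{s_k}$ meets a left component of $\widehat{C}_r$.} The idea is to follow vertical lines. Take a point $\widehat{z}\in L_i(\widehat{C}_r)$. Go down vertically from $\widehat{z}$ until the first time the vertical line leaves $\mathrm{Cl}\,L_i(\widehat{C}_r)$. By the structure \eqref{FTU3}--\eqref{FTU5}, this path can leave $\mathrm{Cl}\,L_i(\widehat{C}_r)$ only through $\widehat{C}_r$ or through the separating segment $S_i$. The segment $S_i$ lies in $\partial V(\widehat{C}_r)$, so $S_i\subset\mathrm{Cl}\,V(\widehat{C}_r)\subset V(\widehat{C}_{s_k})$. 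Two facts then locate $L_i(\widehat{C}_r)$ relative to $\widehat{C}_{s_k}$:
\begin{itemize}
\item $S_i$ lies in $V(\widehat{C}_{s_k})$;
\item $L_i(\widehat{C}_r)$ is the open region lying to the left of $S_i$.
\end{itemize}
We then argue that the component of $\mathcal{U}^-(\widehat{C}_{s_k})\setminus \mathrm{Cl}\,V(\widehat{C}_{s_k})$ containing $\widehat{z}$ is bounded by a vertical segment $S'$ of $\widehat{C}_{s_k}$ which lies over $\mathrm{Cl}\,L_i(\widehat{C}_r)\cup[\widehat{C}_r,\widehat{C}_{s_k}]$. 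We also argue that the local-separation property \eqref{FTU4} at $S'$ puts the component on the same side as $L_i(\widehat{C}_r)$ is on relative to $S_i$, namely the left. To justify this, we plan to use the planar topology of the lifts to $\mathbb{R}^2$: a right component sitting over a left one would force two segments, $S_i$ and $S'$, whose open sides face opposite directions. Joining them inside $V(\widehat{C}_{s_k})$ would then produce a crossing, which is incompatible with $V(\widehat{C}_{s_k})$ being vertically convex from below. We expect this step to need the most care.

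Once Step 3 is established, Step 2 follows, and it contradicts $(\ast)$; hence $R(\widehat{C}_r)=\emptyset$. Running the same argument with $\widehat{f}^{-1}$, with $n_k\to-\infty$ chosen so that again $s_k\to r^+$, together with Lemma~\ref{lemTC00}(2) and Lemma~\ref{lemFTU01}(4), gives $L(\widehat{C}_r)=\emptyset$.
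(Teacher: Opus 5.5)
\textbf{Verdict.} Your Step 1 is correct and is exactly what the paper uses in the first half of its final argument. The gap is in Step 3, and therefore in the inequality you aim for in Step 2. Step 3 cannot be proved, because it is false as a statement about these decompositions. This is precisely the phenomenon the paper's proof is built to get around.

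\textbf{Why Step 3 fails.} Since $\widehat{C}_r\subset\mathcal{U}^-(\widehat{C}_{s_k})$, vertical rays in $V(\widehat{C}_{s_k})$ may pass through points of $\widehat{C}_r$ and through thin parts of the band $]\widehat{C}_r,\widehat{C}_{s_k}[$. They can therefore enter a left component of $\widehat{C}_r$ somewhere other than through its gate. Here is an explicit configuration:
\begin{itemize}
\item the upper endpoint $(\widehat{x}_i,\widehat{y}_{i,2})$ of the gate $S_i$ of $L_i(\widehat{C}_r)$ is the tip of a tongue of $\mathcal{U}^+(\widehat{C}_r)$ coming from the right;
\item $L_i(\widehat{C}_r)$ hooks over that tip, so the vertical above $(\widehat{x}_i,\widehat{y}_{i,2})$ re-enters $L_i(\widehat{C}_r)$ and the hook continues to the right, over the tongue.
\end{itemize}
For every $k$, the vertical through $S_i$, the tip and the hook lies in $\mathcal{U}^-(\widehat{C}_r)\cup\widehat{C}_r$, hence in $V(\widehat{C}_{s_k})$. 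Since $V(\widehat{C}_{s_k})$ is open, a thin neighbourhood of this vertical is in it too, crossing the tip of the tongue inside the band. The part of $L_i(\widehat{C}_r)$ lying over the tongue, to the right of this cut, is then a right component of $\widehat{C}_{s_k}$.

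This is the splitting of Figure~1(a), which the paper discusses in Case 2 after Claim 2. The two gates involved sit on essentially the same vertical and face opposite ways. That is fully compatible with $V(\widehat{C}_{s_k})$ being a union of downward rays, so your crossing argument has nothing to contradict. The effect also persists for all $k$. Hence $\lim_k\mathrm{Leb}(R(\widehat{C}_{s_k}))$ can exceed $\mathrm{Leb}(R(\widehat{C}_r))$ by a fixed amount, and $(\ast)$ alone gives no contradiction. The same gap affects your treatment of $L(\widehat{C}_r)$. Note that the ``convergence to something else'' of the introduction says exactly this: $R(\widehat{C}_{s_k})$ converges to the right components of $\mathcal{U}^-(\widehat{C}_r)\setminus V_\infty^-$, not to $R(\widehat{C}_r)$.

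\textbf{The missing idea.} Do not compare the limit with $\mathrm{Leb}(R(\widehat{C}_r))$ at all; instead approach $\widehat{C}_r$ from above in \emph{both} time directions.
\begin{itemize}
\item The paper proves only that $\lambda_R=\lim_n\mathrm{Leb}(R(\widehat{C}_{s_n}))$ exists for monotone $s_n\to r^+$ (Proposition~\ref{areaaa}). It equals the measure of the right components of $\mathcal{U}^-(\widehat{C}_r)\setminus V_\infty^-$, where $V_\infty=\bigcap_n\mathrm{Cl}\,V(\widehat{C}_{s_n})$. The proof uses the Hausdorff convergence $V(\widehat{C}_{s_n})\to V_\infty$ and the single-gate structure of the components of $\mathcal{U}^-(\widehat{C}_r)\setminus V_\infty^-$.
\item It then interleaves, into one monotone sequence above $\widehat{C}_r$, the forward returns $\widehat{f}^{n_i^+}(\widehat{C}_r)+(0,p_i^+)$ and the backward returns $\widehat{C}_i^-=\widehat{f}^{-n_i^-}(\widehat{C}_r)+(0,p_i^-)$.
\item Your $(\ast)$ gives $\lambda_R\ge\mathrm{Leb}(R(\widehat{C}_r))+\delta_1$.
\item Lemma~\ref{lemFTU01}(3), iterated, gives $\widehat{f}^{n_i^-}(R(\widehat{C}_i^-))\subset R(\widehat{C}_r)+(0,p_i^-)$. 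Hence $\mathrm{Leb}(R(\widehat{C}_i^-))\le\mathrm{Leb}(R(\widehat{C}_r))$, and so $\lambda_R\le\mathrm{Leb}(R(\widehat{C}_r))$.
\end{itemize}
These two bounds contradict each other whatever the actual value of $\lambda_R$, which is why the paper never needs the upper semicontinuity you tried to establish.
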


\begin{proof}
By contradiction, without loss of generality, assume that for some $r\in \mathbb{R}$, $R(\widehat{C}_{r})\neq
\emptyset .$ 

Consider two sequences, $n_{i}^{+},n_{i}^{-}\rightarrow +\infty 
$ such that for adequate choices of integers $p_{i}^{+}$ and $p_{i}^{-}$, the following holds:
\[
n_{i}^{+}\alpha+p_{i}^{+}\rightarrow 0^{+}\text{ and } -n_{i}^{-}\alpha+p_{i}^-\rightarrow 0^{+}.
\]
So, defining
\[
\widehat{C}_{i}^{+}:=\widehat{f}^{n_{i}^{+}}(\widehat{C}_{r})+(0,p_{i}^{+})%
\text{ and }\widehat{C}_{i}^{-}:=\widehat{f}^{-n_{i}^{-}}(\widehat{C}%
_{r})+(0,p_{i}^{-})\text{, } 
\]
they are circloids above $\widehat{C}_{r}$ and 
\begin{equation}
\lim_{i\to \infty }\mathrm{Leb}_{\mathbf{T}^{1}\times \mathbb{R}}\left( [%
\widehat{C}_{r},\widehat{C}_{i}^{+}]\right) =\lim_{i\to \infty }\mathrm{Leb}%
_{\mathbf{T}^{1}\times \mathbb{R}}\left( [\widehat{C}_{r},\widehat{C}%
_{i}^{-}]\right) =0.  \label{areavaizero}
\end{equation}
By considering subsequences if necessary, we can assume that for each
integer $i>0,$%
\begin{equation}
\widehat{C}_{r}\prec \widehat{C}_{i+1}^{+}\prec \widehat{C}_{i}^{-}\prec 
\widehat{C}_{i}^{+}.  \label{ordena}
\end{equation}
So the horizontal curve $\mathbf{T}^{1}\times \{a_{r}-1\}$ which is contained
in $\mathcal{U}^{-}(\widehat{C}_{r}),$ is also contained in $\mathcal{U}^{-}(%
\widehat{C}_{i}^{+})$ and in $\mathcal{U}^{-}(\widehat{C}_{i}^{-})$ for all
integers $i>0.$

For any circloid $\widehat{C},$ $\widehat{C}_{r}\preceq \widehat{C},$ denote
by $V_{a_{r}}(\widehat{C})=V(\widehat{C})\cap \mathbf{T}^{1}\times
[a_{r},+\infty [.$ From expression (\ref{ordena}) we get that, 
\[
\begin{array}{c}
\mathrm{Leb}_{\mathbf{T}^{1}\times \mathbb{R}}(V_{a_{r}}(\widehat{C}%
_{i}^{+}))>\mathrm{Leb}_{\mathbf{T}^{1}\times \mathbb{R}}(V_{a_{r}}(\widehat{%
C}_{i}^{-}))> \\ 
>\mathrm{Leb}_{\mathbf{T}^{1}\times \mathbb{R}}(V_{a_{r}}(\widehat{C}%
_{i+1}^{+}))>\mathrm{Leb}_{\mathbf{T}^{1}\times \mathbb{R}}(V_{a_{r}}(%
\widehat{C}_{r})).
\end{array}
\]
So, $\lim\limits_{i\to \infty }\mathrm{Leb}_{\mathbf{T}^{1}\times \mathbb{R}%
}(V_{a_{r}}(\widehat{C}_{i}^{+}))=\lim\limits_{i\to \infty }\mathrm{Leb}_{%
\mathbf{T}^{1}\times \mathbb{R}}(V_{a_{r}}(\widehat{C}_{i}^{-}))=\lambda
_{V}\geq \mathrm{Leb}_{\mathbf{T}^{1}\times \mathbb{R}}(V_{a_{r}}(\widehat{C}%
_{r})).$ The last inequality is actually an equality, but as we do not need
it in our proof, for now we keep the inequality.

Clearly, for all $i>0,$ 
\begin{equation}  \label{ajudalim}
\begin{array}{c}
R( \widehat{C}_i^{+})\cup L(\widehat{C}_i^{+})\cup V_{a_r}(\widehat{C}%
_i^{+})\supset R(\widehat{C}_i^{-})\cup L(\widehat{C}_i^{-})\cup V_{a_r}( 
\widehat{C}_i^{-})\supset \\ 
\supset R(\widehat{C}_{i+1}^{+})\cup L( \widehat{C}_{i+1}^{+})\cup V_{a_r}(%
\widehat{C}_{i+1}^{+})\supset R(\widehat{C}_r)\cup L(\widehat{C}_r)\cup
V_{a_r}(\widehat{C}_r).
\end{array}
\end{equation}

On the other hand, every circloid in this family is contained in a closed
vertical annulus of height $2C,$ where $C>0$ is the uniform bound in the
vertical displacement with respect to the translation by $\alpha ,$ see
expression (\ref{eqBD05}). This means that 
$0\leq \mathrm{Leb}%
_{\mathbf{T}^{1}\times \mathbb{R}}(R(\widehat{C}_{i}^{+}))\leq 2C$ and $%
0\leq \mathrm{Leb}_{\mathbf{T}^{1}\times \mathbb{R}}(L(\widehat{C}%
_{i}^{+}))\leq 2C.$ So, again considering subsequences if necessary, we may
assume that the following limits exist: 
\[
\begin{array}{c}
\lim\limits_{i\to \infty }\mathrm{Leb}_{\mathbf{T}^{1}\times \mathbb{R}}(R(%
\widehat{C}_{i}^{+}))=\lambda _{R} \\ 
and \\ 
\lim\limits_{i\to \infty }\mathrm{Leb}_{\mathbf{T}^{1}\times \mathbb{R}}(L(%
\widehat{C}_{i}^{+}))=\lambda _{L}
\end{array}
\]

Actually, a more careful analysis using Proposition \ref{areaaa} below shows
that the limits above exist even without considering subsequences.

From expressions (\ref{areavaizero}) and (\ref{ajudalim}), we get that 
\[
\lambda _{R}+\lambda _{L}\leq \mathrm{Leb}_{\mathbf{T}^{1}\times \mathbb{R}%
}(R(\widehat{C}_{r}))+\mathrm{Leb}_{\mathbf{T}^{1}\times \mathbb{R}}(L(%
\widehat{C}_{r})). 
\]
In the previous inequality, we are using that $\lambda _{V}\geq \mathrm{Leb}%
_{\mathbf{T}^{1}\times \mathbb{R}}(V_{a_{r}}(\widehat{C}_{r})).$

Remember that the map $\widehat{h}:\mathbf{S}^1\times \mathbb{R}\rightarrow %
\mathbb{R}$ built in the proof of Theorem \ref{teoSC01} is continuous and
satisfies $\widehat{h}(\widehat{f}(\widehat{z}))=\widehat{h}(\widehat{z}%
)+\alpha .$

One last piece of information that follows from the way $\widehat{h}$ is
constructed, is the following: As $i\rightarrow \infty ,$ both $\widehat{C}%
_{i}^{+}$ and $\widehat{C}_{i}^{-}$ converge in the Hausdorff topology to 
\begin{equation}
K_{r}^{+}:=\left( \widehat{h}^{-1}(r)\cap \mathcal{U}^{+}(\widehat{C}%
_{r})\right) \cup \widehat{C}_{r}=\widehat{h}^{-1}(r)\cap (\mathcal{U}^{-}(%
\widehat{C}_{r}))^{c}.  \label{defkrmais}
\end{equation}

Now we are ready to state the last proposition we need:

\begin{proposition}
\label{areaaa} For any monotone sequence $s_{n}\rightarrow r^{+},$ the
corresponding circloids $\widehat{C}_{s_{n}}$ satisfy: $\lim\limits_{n\to
\infty }\mathrm{Leb}_{\mathbf{T}^{1}\times \mathbb{R}}(R(\widehat{C}%
_{s_{n}}))=\lambda _{R}$ and $\lim\limits_{n\to \infty }\mathrm{Leb}_{%
\mathbf{T}^{1}\times \mathbb{R}}(L(\widehat{C}_{s_{n}}))=\lambda _{L}.$
\end{proposition}

\begin{proof}

Let $s_{n}$ be a monotone sequence converging to $r$ from above, and let 
$\widehat{C}_{s_{n}}$ be the corresponding circloids. 
Consider the set 
\begin{equation}
V_{\infty }:=\bigcap _{n=0}^{+\infty }\mathrm{Cl}_{\mathbf{T}^{1}\times \mathbb{R}}(V(%
\widehat{C}_{s_n})).  \label{vinfinito}
\end{equation}
Let us understand some of its basic properties. As $\mathbf{T}^{1}\times
]-\infty ,a_{r}-1]\subset \mathcal{U}^{-}(\widehat{C}_{r})$ and for all $%
n>0, $ $\widehat{C}_{r}\prec \widehat{C}_{s_{n+1}}\prec \widehat{C}_{s_{n}},$
we get that $\mathbf{T}^{1}\times ]-\infty ,a_{r}-1]\subset V(\widehat{C}_{s_n})$
and $V(\widehat{C}_{s_{n+1}})\subset V(\widehat{C}_{s_n}).$ So $V_{\infty }$ is
not empty. And if $\widehat{w}\in \partial _{\mathbf{T}^{1}\times \mathbb{R}%
}V(\widehat{C}_{s_{n+1}}),$ then as $V(\widehat{C}_{s_{n+1}})$ is open, there exists 
$\widehat{w}_{i}\in V(\widehat{C}_{s_{n+1}})$ such that $\widehat{w}%
_{i}\rightarrow \widehat{w}.$ But this implies that 
\begin{equation}
\{p_{1}(\widehat{w})\}\times ]-\infty ,p_{2}(\widehat{w})]:=V^{-}_{\widehat{w}}\subset \mathrm{Cl}_{%
\mathbf{T}^{1}\times \mathbb{R}}(V(\widehat{C}_{s_{n+1}})).  \label{ddd}
\end{equation}
So, 
$V^{-}_{\widehat{w}}\subset 
\mathcal{U}^{-}(\widehat{C}_{s_{n+1}})\cup \widehat{C}_{s_{n+1}}\subset \mathcal{U}%
^{-}(\widehat{C}_{s_n}),$ which implies that 
$$
\mathrm{Cl}_{\mathbf{T}^{1}\times %
\mathbb{R}}(V(\widehat{C}_{s_{n+1}}))\subset V(\widehat{C}_{s_n}).
$$ 
This tells us
that although the sets $V(\widehat{C}_{s_n})$ are open, $V_{\infty }$ is equal
to $\cap _{n=0}^{+\infty }V(\widehat{C}_{s_n}).$

And if $\widehat{w}\in V_{\infty },$ then $\widehat{w}\in \mathrm{Cl}_{\mathbf{T}%
^{1}\times \mathbb{R}}V(\widehat{C}_{s_n})$ for all $n\geq 0.$ So expression (%
\ref{ddd}) implies that 
$V^{-}_{\widehat{w}}\subset V_{\infty },$ and therefore 
\begin{equation}
V_{\infty }\text{ is closed, connected and it contains }\mathbf{T}^{1}\times
]-\infty ,a_{r}-1]\text{.}  \label{propvinf}
\end{equation}
Finally, we claim that 
\begin{equation}
V_{\infty }\cap \left( \mathcal{U}^{+}(\widehat{C}_{r})\right) ^{c}=\{%
\widehat{z}\in \widehat{C}_{r}\cup \mathcal{U}^{-}(\widehat{C}_{r}):V^{-}_{\widehat{z}}
\cap \mathcal{U}^{+}(%
\widehat{C}_{r})=\emptyset \}.  \label{caracvinf}
\end{equation}
To see this, assume $\widehat{z}$ belongs to the second set. This means that 
$
V^{-}_{\widehat{z}}\subset 
\mathcal{U}^{-}(\widehat{C}_{r})\cup \widehat{C}_{r}\subset \mathcal{U}^{-}(%
\widehat{C}_{s_n})$ for all $n\geq 0.$ So $V^{-}_{\widehat{z}}
\subset V(\widehat{C}_{s_n}),$ and thus 
$$
V^{-}_{\widehat{z}}
\subset V_{\infty }\cap
\left( \mathcal{U}^{+}(\widehat{C}_{r})\right) ^{c}.
$$ 
Now, pick some $%
\widehat{z}\in V_{\infty }\cap \left( \mathcal{U}^{+}(\widehat{C}%
_{r})\right) ^{c}.$ Suppose that $V^{-}_{\widehat{z}}
\cap \mathcal{U}^{+}(\widehat{C}_{r})\neq \emptyset .$
This means that there exists a segment $\{p_{1}(\widehat{z})\}\times
]a,b[\subset \mathcal{U}^{+}(\widehat{C}_{r})$ for some $b\leq p_{2}(\widehat{z}%
),$ and both extremes $(p_{1}(\widehat{z}),a)$ and $(p_{1}(\widehat{z}),b)$
belong to $\widehat{C}_{r}.$ The complement of $\widehat{C}_{r}\cup \{p_{1}(%
\widehat{z})\}\times ]a,b[$ has a bounded connected component $D_{a,b}$
which belongs to $\mathcal{U}^{+}(\widehat{C}_{r}).$ As $\lim\limits_{n\to
\infty }\mathrm{Leb}_{\mathbf{T}^{1}\times \mathbb{R}}\left( [\widehat{C}%
_{r},\widehat{C}_{s_{n}}]\right) =0,$ we obtain that for sufficiently large $%
n>0,$ $\widehat{C}_{s_n}$ intersects $D_{a,b},$ and so, $\mathcal{U}^{+}(%
\widehat{C}_{s_n})$ intersects $\{p_{1}(\widehat{z})\}\times ]a,b[.$ And this
is a contradiction with the choice of $\widehat{z},$ because it belongs $\cap
_{n=0}^{+\infty }V(\widehat{C}_{s_n}).$

\begin{description}
\item[Fact 1.]  {\it As $n\rightarrow \infty ,$ the sets $V(%
\widehat{C}_{s_n})$ converge to $V_{\infty }$ in the Hausdorff topology.}
\end{description}

\begin{proof} Given $\varepsilon >0,$ and $n\geq 0,$ the $\varepsilon $%
-neighborhood of $V(\widehat{C}_{s_n})$ contains $V_{\infty }.$ This follows
from the fact that $V_{\infty }\subset \mathrm{Cl}_{\mathbf{T}^{1}\times \mathbb{R}%
}(V(\widehat{C}_{s_{n+1}}))\subset V(\widehat{C}_{s_n}).$

So we are left to show that there exists $n_{0}\geq 0$ such that for all $%
n\geq n_{0},$ $V(\widehat{C}_{s_n})$ is contained in the $\varepsilon $%
-neighborhood of $V_{\infty }.$ By contradiction, assume that this is not
the case. As $V(\widehat{C}_{s_{n+1}})\subset V(\widehat{C}_{s_n}),$ then for some
$\varepsilon>0$, there exists
a sequence of points $\widehat{z}_{n}\in \mathrm{Cl}_{\mathbf{T}^{1}\times \mathbb{R}%
}(V(\widehat{C}_{s_n}))$ such that $\widehat{z}_{n}$ does not belong to the $%
\varepsilon $-neighborhood of $V_{\infty }.$ As $\mathbf{T}^{1}\times
]-\infty ,a_{r}-1]\subset $ $V_{\infty },$ the sequence $\widehat{z}_{n}$
must be bounded in $\mathbf{T}^{1}\times \mathbb{R}.$ Therefore it admits a
convergent subsequence to a point $\widehat{z}_{\infty }.$ If by
contradiction, we assume that $\widehat{z}_{\infty }\notin V_{\infty },$
then there exists $n_{*}>0$ and $\delta >0$ such that $\widehat{z}_{\infty
}\notin \mathrm{Cl}_{\mathbf{T}^{1}\times \mathbb{R}}(V(\widehat{C}_{s_{n_{*}}}))$ and 
\begin{equation}
B_{\delta }(\widehat{z}_{\infty })\cap \mathrm{Cl}_{\mathbf{T}^{1}\times \mathbb{R}%
}(V(\widehat{C}_{s_{n_{*}}}))=\emptyset .  \label{intervad}
\end{equation}
On the other hand, for infinitely many $n^{\prime }s$ larger than $n_{*},$ the
corresponding point $\widehat{z}_{n}$ belongs to $\mathrm{Cl}_{\mathbf{T}^{1}\times %
\mathbb{R}}(V(\widehat{C}_{s_n}))\cap $ $B_{\delta }(\widehat{z}_{\infty }).$
So $V(\widehat{C}_{s_n})\cap $ $B_{\delta }(\widehat{z}_{\infty })$ is not
empty, a contradiction with assumption (\ref{intervad}), because $\mathrm{Cl}_{%
\mathbf{T}^{1}\times \mathbb{R}}(V(\widehat{C}_{s_n}))\subset \mathrm{Cl}_{\mathbf{T}%
^{1}\times \mathbb{R}}(V(\widehat{C}_{s_{n_{*}}})).$

Thus the point $\widehat{z}_{\infty }$ belongs to $V_{\infty }.$ And as $%
\widehat{z}_{\infty }$ is an accumulation point of the sequence $\widehat{z}%
_{n},$ for infinitely many values of $n,$ $\widehat{z}_{n}$ belongs to the $%
\varepsilon $-neighborhood of $V_{\infty },$ a contradiction with the choice
of the sequence $\widehat{z}_{n}$ that finishes the proof.
\end{proof}

From our assumption that $s_{n}\rightarrow r^{+}$ monotonically from above,
we know that

\[
\widehat{C}_{r}\prec \widehat{C}_{s_{n}},\text{ }\widehat{C}%
_{s_{n}}\rightarrow K_{r}^{+},\text{ and }\lim_{n\to \infty }\mathrm{Leb}_{%
\mathbf{T}^{1}\times \mathbb{R}}\left( [\widehat{C}_{r},\widehat{C}%
_{s_{n}}]\right) =0. 
\]

And as $V_{\infty }\subset \mathcal{U}^{-}(\widehat{C}_{s_n})\cup \widehat{C}%
_{s_n}$ for all $n\geq 0,$ we obtain that $V_{\infty }\subset \mathcal{U}^{-}(%
\widehat{C}_{r})\cup K_{r}^{+},$ where $K_{r}^{+}$ is defined in expression (\ref{defkrmais}).
As $K_{r}^{+}$ is contained in $\widehat{h}^{-1}(r),$ it has zero Lebesgue measure
and therefore no interior.

Now, consider the set 
\[
V_{\infty }^{-}=V_{\infty }\cap \left( \mathcal{U}^{+}(\widehat{C}%
_{r})\right) ^{c} 
\]
and let us fix some $\widehat{z}=(\widehat{x},\widehat{y})\in \partial _{%
\mathbf{T}^{1}\times \mathbb{R}}V_{\infty }^{-}.$ Either:

\begin{itemize}
\item  $\widehat{z}\in \widehat{C}_{r};$

\item  or not;
\end{itemize}

In the second possibility, $\widehat{z}\in \mathcal{U}^{-}(\widehat{C}_{r}).$
So, there exists $\eta >0$ such that $[\widehat{x}-\eta ,\widehat{x}+\eta
]\times [\widehat{y}-\eta ,\widehat{y}+\eta ]\subset \mathcal{U}^{-}(%
\widehat{C}_{r}).$ We already know that $\{\widehat{x}\}\times ]-\infty ,%
\widehat{y}]=V^{-}_{\widehat{z}}\subset V_{\infty }^{-},$ so $\{\widehat{x}\}\times ]-\infty ,%
\widehat{y}+\eta ]$ is also contained in $V_{\infty }^{-}.$ And there are
two possibilities now: either all points in $[\widehat{x}-\eta ,\widehat{x}%
+\eta ]\times [\widehat{y}-\eta ,\widehat{y}+\eta ]\cap V_{\infty }^{-}$ are
contained in the vertical through $\widehat{z},$ or there is some point in 
$$
\bigl[\left( [\widehat{x}-\eta ,\widehat{x}[\cup ]\widehat{x},\widehat{x}+\eta
]\right) \times [\widehat{y}-\eta ,\widehat{y}+\eta ]\bigr]\cap V_{\infty }^{-},
$$
to the right of $\widehat{z}$ for instance. This means that for some $%
\widehat{x}<\widehat{x}^{\prime }\leq \widehat{x}+\eta ,$  the vertical \\ $\{%
\widehat{x}^{\prime }\}\times ]-\infty ,\widehat{y}+\eta ]$ is also
contained in $V_{\infty }^{-}.$ But as $[\widehat{x},\widehat{x}^{\prime
}]\times \{\widehat{y}+\eta \}\subset \mathcal{U}^{-}(\widehat{C}_{r})$ and $%
\widehat{C}_{r}$ is a circloid with no interior, the whole region $]\widehat{x},\widehat{x}%
^{\prime }[\times ]-\infty ,\widehat{y}+\eta ]$ does not intersect $\widehat{%
C}_{r},$ and thus it is contained in $V_{\infty }^{-}.$ In this case there
can not be a point in $V_{\infty }^{-}\cap $ $[\widehat{x}-\eta ,\widehat{x}[%
\times [\widehat{y}-\eta ,\widehat{y}+\eta ],$ otherwise $\widehat{z}$ would
be an interior point of $V_{\infty }^{-}$ and equivalently, $V^{-}_{\widehat{z}}$
would be contained in $\mathcal{U}^{-}(\widehat{C}_{r}).$ 

Summarizing, for a point $\widehat{z}$ in the boundary of $V_{\infty }^{-}$
which does not belong to $\widehat{C}_{r},$ the set $V_{\infty }^{-}$
intersected with a sufficiently small rectangular neighborhood $R$ of $%
\widehat{z},$ is given by, either:

\begin{enumerate}
\item  Only the vertical segment containing $\widehat{z}$;

\item  The closure of the left or of the right side of the vertical segment containing  
$\widehat{z}$ in $R;$
\end{enumerate}

Now we consider a connected component $D$ of $\mathcal{U}^{-}(\widehat{C}_{r})\setminus V_{\infty }^{-}.$ 
Let us prove that $D$ is an open topological disk in $%
\mathbf{T}^{1}\times \mathbb{R}.$ If it is not a disk, then it is essential.
But then, any essential loop inside $D$ separates $V_{\infty }^{-}$ from $\widehat{C}_r,$ a
contradiction. So all connected components of 
$\mathcal{U}^{-}(\widehat{C}_{r})\setminus V_{\infty }^{-}$ 
are open topological disks, and they belong to $\mathbf{T}^{1}\times
]a_{r}-1,+\infty [.$

The boundary of $D$ is contained in 
$\widehat{C}_r\cup  \partial _{\mathbf{T}^{1}\times \mathbb{R}}V_{\infty }^{-}.$
Therefore it may contain points in $\widehat{C}_{r}$ and open vertical
segments (called gates of $D$) that avoid $\widehat{C}_{r},$ and whose
endpoints belong to $\widehat{C}_{r}.$

\begin{description}
\item[Claim 1.] 
{\it There must be a single
gate in $\partial _{\mathbf{T}^{1}\times \mathbb{R}}D$.}
\end{description}

\begin{proof} 

As the set $V^{-}_\infty$ is different from $V(\widehat{C}_r)$, we decided to present an original proof for this claim, 
although probably it could be proved using similar ideas to the ones used by Fathi and others in the case 
of connected components of $\mathcal{U}^{-}(\widehat{C}_{r})\setminus V(\widehat{C}_r)$.

\begin{itemize}
\item  Existence of gates;

 If $D$ had no gates, then $\partial _{\mathbf{T}^{1}\times \mathbb{R}}D$ would be
contained in $\widehat{C}_{r}.$ As $\left( \widehat{C}_{r}\right) ^{c}$
has exactly two connected components, both unbounded, we obtain a contradiction.
\end{itemize}

Enumerate all the gates in $\partial _{\mathbf{T}^{1}\times \mathbb{R}}D$ as 
$S_{i}:=\{\widehat{x}_{i}\}\times ]\widehat{y}_{i,1},\widehat{y}_{i,2}[$ for $%
i\in I\subset \mathbb{N},$ some countable index set.
For any such gate $S_{i},$ the complement 
$\mathcal{U}^{-}(\widehat{C}_{r})\setminus \left[S_{i}\cup \widehat{C}_{r}\right]$ 
has exactly 
two connected components (something we already used), one is a bounded open
topological disk, and the other one is homeomorphic to $\mathbf{T}^{1}\times %
\mathbb{R}.$ 

So let us consider each gate $S_{i}$ of $D.$ Fixed some point 
$\widehat{w}_{i}\in S_{i},$ there exists an open ball $B_{i}$ centered at 
$\widehat{w}_{i}$ which avoids $\widehat{C}_{r}.$ One connected component of 
$B_{i}\backslash S_{i}$ is contained in the bounded connected component of
$\mathcal{U}^{-}(\widehat{C}_{r})\setminus \left[S_{i}\cup \widehat{C}_{r}\right],$ 
 and the other one is
contained in the unbounded connected component. So, for each gate $S_{i},$
either $D$ is contained in the bounded connected component of such complement 
(in this case we say that $S_{i}$ is an exit),
or $D$ is contained in the unbounded connected component (here we say that $S_{i}$ is
an entrance).

\begin{itemize}
\item  Existence of exit gates;

It is easy to see that $D$ can not have more than one exit. Assume 
$S_{i^{\prime }}$ is an exit gate for $D.$ This means that $D$ is contained
in the bounded connected component of 
$\mathcal{U}^{-}(\widehat{C}_{r})\setminus \left[S_{i^{\prime}}\cup \widehat{C}_{r}\right].$ 
So, for all the other gates $S_{i}$ of $D,$ $S_{i}\neq
S_{i^{\prime }},$ the bounded connected component of 
$\mathcal{U}^{-}(\widehat{C}_{r})\setminus \left[S_{i}\cup \widehat{C}_{r}\right]$ 
avoids $D.$ So all other gates are entrances.

And if we assume that all gates in $D$ are entrances, then this means that
for each gate $S_{i}$ of $D,$ the connected component $D_{i}$ of 
$\mathcal{U}^{-}(\widehat{C}_{r})\setminus \left[S_{i}\cup \widehat{C}_{r}\right]$
which avoids $D$ is bounded. So, the
union of $D$ with all the ${D_{i}}^{\prime}s$ is a bounded set, whose
boundary is contained in $\widehat{C}_{r}.$ As this is a contradiction,
there exists exactly one exit gate for each connected component $D$ of 
$\mathcal{U}^{-}(\widehat{C}_{r})\setminus V_{\infty }^{-}.$

\item  $D$ has a single gate;

Denote the exit gate as $S_{1}=\{\widehat{x}_{1}\}\times ]\widehat{y}_{1,1},%
\widehat{y}_{1,2}[.$ First we are going to show that all the other gates
belong to the vertical that contains $S_{1}.$ And moreover, $S_{1}$ is the
lowest gate in that vertical.

By contradiction, assume that there exists another gate of $D$, denoted $%
S_{i^{\prime }}=\{\widehat{x}_{i^{\prime }}\}\times ]\widehat{y}_{i^{\prime
},1},\widehat{y}_{i^{\prime },2}[,$ such that $\widehat{x}_{1}\neq \widehat{x}%
_{i^{\prime }},$ or if they are equal, assume that $\widehat{y}_{i^{\prime
},2}\leq $ $\widehat{y}_{1,1}.$ Then as the vertical 
\[
\{\widehat{x}_{i^{\prime }}\}\times ]-\infty ,\widehat{y}_{2i^{\prime }}]%
\text{ is contained in }V_{\infty }^{-},\text{ }
\]
it avoids $\mathcal{U}^{+}(\widehat{C}_{r}).$ To see this, remember that 
$\overline{S_{i^{\prime }}}=\{\widehat{x}_{i^{\prime }}\}\times [\widehat{y}%
_{i^{\prime },1},\widehat{y}_{i^{\prime },2}]$ is contained in $\partial _{%
\mathbf{T}^{1}\times \mathbb{R}}V_{\infty }^{-}\subset V_{\infty }^{-}.$ As
for any $\widehat{w}\in V_{\infty }^{-},$ the whole semi-line $V^{-}_{\widehat{w}}=\{p_{1}(%
\widehat{w})\}\times ]-\infty ,p_{2}(\widehat{w})]$ is contained in $%
V_{\infty }^{-},$ we obtain the above inclusion. Finally, pick a simple arc $%
\alpha \subset \mathcal{U}^{-}(\widehat{C}_{r})$ that connects a point $%
\widehat{z}_{i^{\prime }}$ in $S_{i^{\prime }}$ to some point in $S_{1},$
crosses $S_{1}$ only once, exiting $D$ and continues inside $\mathcal{U}^{-}(%
\widehat{C}_{r})$ until it ends at some point of the form $(\widehat{x}%
_{i^{\prime }},\widehat{y}^{\prime \prime })\in \mathcal{U}^{-}(\widehat{C}%
_{r}).$ So, the simple closed curve 
\[
\upsilon :=\alpha \cup \{\widehat{x}_{i^{\prime }}\}\times [\widehat{y}%
^{\prime \prime },p_{2}(\widehat{z}_{i^{\prime }})]
\]
is contained in $\mathcal{U}^{-}(\widehat{C}_{r})\cup \widehat{C}_{r},$ and
one of the connected components of $\upsilon ^{c}$ contains an extreme of $%
S_{1},$ and the other connected component of $\upsilon ^{c}$ contains the
other extreme of $S_{1}.$ Thus $\mathcal{U}^{+}(\widehat{C}_{r})$ intersects
both connected components of $\upsilon ^{c}$, avoiding $\upsilon .$ This
contradiction ends the argument.

So, for any connected component $D$ of 
$\mathcal{U}^{-}(\widehat{C}_{r})\setminus V_{\infty }^{-},$ 
there exists $\widehat{x}_{D}\in $ $\mathbf{T}^{1}$ such that all gates of $D
$ belong to the vertical $\{\widehat{x}_{D}\}\times \mathbb{R}.$ As $%
V_{\infty }^{-}$ is closed, there exist numbers 
$$
\widehat{y}_{L}=\inf \{t\in \mathbb{R}:(\widehat{x}_{D},t)\text{ belongs to
a gate of }D\}
$$
$$
\text{ and }
$$
$$
\widehat{y}_{H}=\sup \{t\in \mathbb{R}:(\widehat{x}%
_{D},t)\text{ belongs to a gate of }D\},
$$
such that $S_{Total}:=\{\widehat{x}_{D}\}\times ]\widehat{y}_{L},\widehat{y}%
_{H}[\subset V_{\infty }^{-}$ and all gates of $D$ are contained in $%
S_{Total}.$ If we show that $S_{Total}\subset $ $\partial _{\mathbf{T}%
^{1}\times \mathbb{R}}V_{\infty }^{-},$ then it is a single gate and we are
done. Suppose by contradiction, that for some $\widehat{y}_{L}<\widehat{y}<%
\widehat{y}_{H},$ the point $(\widehat{x}_{D},\widehat{y})$ belongs to the
interior of $V_{\infty }^{-}.$ The $\{\widehat{x}_{D}\}\times ]-\infty ,%
\widehat{y}[\subset \mathcal{U}^{-}(\widehat{C}_{r}),$ a contradiction with
the existence of pieces of gates in $\{\widehat{x}_{D}\}\times ]-\infty ,%
\widehat{y}[.$ So $S_{Total}$ is the only gate in $D,$ and therefore it is
an exit gate. 
\end{itemize}
The conclusion is: any connected component of 
$\mathcal{U}^{-}(\widehat{C}_{r})\setminus V_{\infty }^{-}$ has a single gate.
\end{proof}

\begin{remark}
Therefore, the above claim implies that each connected component of the set
$\mathcal{U}^{-}(\widehat{C}_{r})\setminus V_{\infty }^{-}$ is locally to the 
left or to the right of its
gate. This divides this set 
 into two classes: the right components and the left ones. So we can
write: 
\[
\mathcal{U}^{-}(\widehat{C}_{r})\setminus V_{\infty }^{-}=\left[\cup _{i\in I_{L}}L_{i}\right]\cup \left[\cup _{i\in
I_{R}}R_{i}\right],
\]
where $I_{L},I_{R}$ are index sets contained in $\mathbb{N}.$ The boundary
of each $L_{i}$ or $R_{i}$ is given by the union of a continuum contained in 
$\widehat{C}_{r}$ with gates denoted as $S_{i}^{L}=\{\widehat{x}%
_{i}^{L}\}\times ]\widehat{y}_{i,1}^{L},\widehat{y}_{i,2}^{L}[$ for the $%
L_{i^{\prime }s},$ and as $S_{i}^{R}=\{\widehat{x}_{i}^{R}\}\times ]\widehat{%
y}_{i,1}^{R},\widehat{y}_{i,2}^{R}[$ for the $R_{i^{\prime }s}.$ Define $R:=$ $%
\cup _{i\in I_{R}}R_{i}$ and $L:=\cup _{i\in I_{L}}L_{i}.$
\end{remark}

We considered the set $V_{\infty }^{-}$ and the connected components of its
complement in $\mathcal{U}^{-}(\widehat{C}_{r})$, because certain connected components of $R(\widehat{C}_{r})$ or
of $L(\widehat{C}_{r}),$ might be divided into connected components of the
right and left parts of upper nearby circloids, see for instance figure 1. 
So, the functions $\widehat{C}\rightarrow V(\widehat{C}),$ $\widehat{C}%
\rightarrow R(\widehat{C})$ and $\widehat{C}\rightarrow L(\widehat{C})$ are
not continuous in the Hausdorff topology.

\begin{figure}[!h]
	\centering
	\includegraphics[scale=0.35]{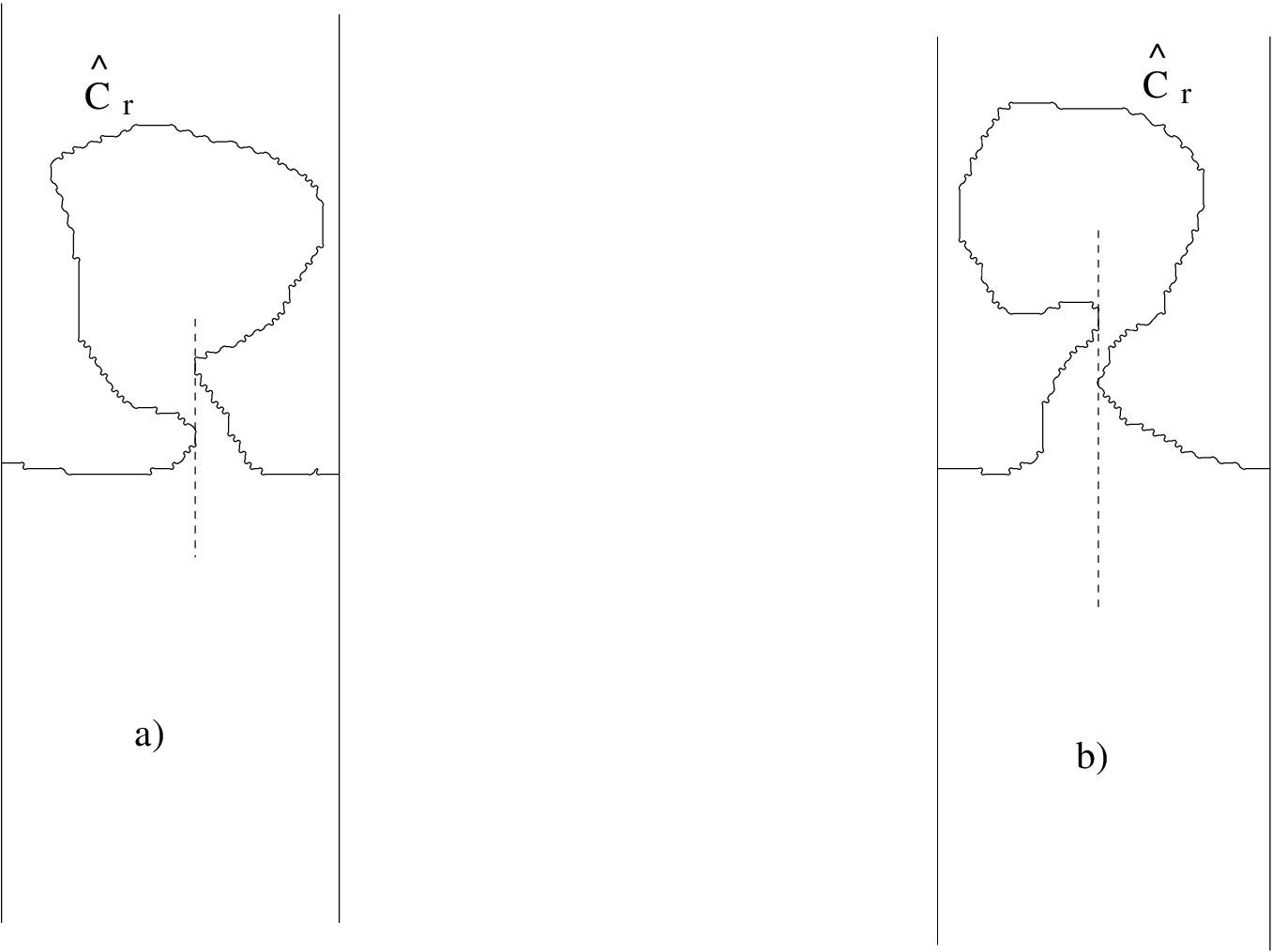}
	\caption{In figure 1 a), a left component of $\widehat{C}_r$ is divided into left and right components of nearby circloids
above it. Figure 1 b) shows a similar situation for a right component.}
\end{figure}

\vskip 0.2truecm

Let us explain how considering $%
V_{\infty }^{-}$ instead of $V(\widehat{C}_{r})$ made things easier.
Following our notation, consider $L_{i_{0}}$ as above and the corresponding
gate $S_{i_{0}}^{L}=\{\widehat{x}_{i_{0}}^{L}\}\times ]\widehat{y}%
_{i_{0},1}^{L},\widehat{y}_{i_{0},2}^{L}[.$ This means that there exists $\eta
_{0}>0$ such that $]\widehat{x}_{i_{0}}^{L},\widehat{x}_{i_{0}}^{L}+\eta
_{0}]\times ]-\infty,
\widehat{y}_{i_{0},1}^{L}+\eta
_{0}]\cap (\partial _{\mathbf{T}^{1}\times \mathbb{R}}L_{i_{0}})=\emptyset ,$
precisely because we are considering a left component. Moreover, for some $%
0<\eta _{1}<\eta _{0},$ $[\widehat{x}_{i_{0}}^{L}-\eta _{1},\widehat{x}%
_{i_{0}}^{L}]\times ]-\infty ,\widehat{y}_{i_{0},1}^{L}+\eta _{0}]\cap
(\partial _{\mathbf{T}^{1}\times \mathbb{R}}L_{i_{0}})$ contains a connected
component $K^{L}_{1i_{0}}$ which intersects both vertical boundary pieces of the 
boundary of 
$[\widehat{x}_{i_{0}}^{L}-\eta _{1},\widehat{x}_{i_{0}}^{L}]\times ]-\infty ,%
\widehat{y}_{i_{0},1}^{L}+\eta _{0}].$ This follows from the fact that 
$[\widehat{x}_{i_{0}}^{L}-\eta _{1},\widehat{x}_{i_{0}}^{L}[\times 
\{\widehat{y}_{i_{0},1}^{L}+\eta _{0}\}$ is contained in $L_{i_{0}}$ and 
$[\widehat{x}_{i_{0}}^{L}-\eta _{1},\widehat{x}_{i_{0}}^{L}[\times \{t\}$ 
is contained in $\left(\mathrm{Cl}_{\mathbf{T}^1\times\mathbb{R}}L_{i_{0}}\right)^c$
for some $t<\widehat{y}_{i_{0},1}^{L}.$

The next thing we want to understand is what happens near $(\widehat{x}%
_{i_{0}}^{L},\widehat{y}_{i_{0},2}^{L})$ and above. For this, first remember
that the $\partial _{\mathbf{T}^{1}\times \mathbb{R}}L_{i_{0}}$ is given by
the disjoint union of $S_{i_{0}}^{L}$ with a continuum contained in $%
\widehat{C}_{r}.$ And then, define the following number: 
\begin{equation}
\widehat{y}_{i_{0}}^{L\prime }=\min \{t\geq \widehat{y}_{i_{0},2}^{L}:(%
\widehat{x}_{i_{0}}^{L},t)\text{ is accumulated by points in }\{\widehat{x}%
_{i_{0}}^{L}\}\times [t,+\infty [\cap \left( \overline{L_{i_{0}}}\right)
^{c}\},  \label{defylin}
\end{equation}
where $\overline{L_{i_{0}}}:=\mathrm{Cl}_{\mathbf{T}^1\times\mathbb{R}}L_{i_{0}}$. 
The compactness of $\overline{L_{i_{0}}}$ implies that $(\widehat{x}%
_{i_{0}}^{L},\widehat{y}_{i_{0}}^{L\prime })\in \partial _{\mathbf{T}%
^{1}\times \mathbb{R}}L_{i_{0}}\backslash S_{i_{0}}^{L}.$ And the vertical
segment $\{\widehat{x}_{i_{0}}^{L}\}\times [\widehat{y}_{i_{0},2}^{L},%
\widehat{y}_{i_{0}}^{L\prime }]$ is by definition, contained in $\overline{%
L_{i_{0}}}.$

\begin{description}
\item[Claim 2.]  For any $\kappa >0,$ there exists $\mu >0$ and an horizontal
segment contained in $(\overline{L_{i_{0}}})^{c}$ intersecting both vertical pieces of the 
boundary of $[\widehat{x}_{i_{0}}^{L}-\mu ,\widehat{x}_{i_{0}}^{L}+\mu
]\times ]\widehat{y}_{i_{0}}^{L\prime },\widehat{y}_{i_{0}}^{L\prime
}+\kappa [.$
\end{description}

\begin{proof}

From the definition of $\widehat{y}_{i_{0}}^{L\prime },$ there exists a
sequence $t_{n}\rightarrow (\widehat{y}_{i_{0}}^{L\prime })^{+}$ such that $(%
\widehat{x}_{i_{0}}^{L},t_{n})\notin \overline{L_{i_{0}}}.$ Given $\kappa
>0, $ pick $\widehat{y}_{i_{0}}^{L\prime }<t_{n}<\widehat{y}%
_{i_{0}}^{L\prime }+\kappa .$ Then, as $(\widehat{x}_{i_{0}}^{L},t_{n})%
\notin \overline{L_{i_{0}}},$ there exists an open ball of radius $r_{n}>0$
centered at $(\widehat{x}_{i_{0}}^{L},t_{n})$ which avoids $\overline{%
L_{i_{0}}}.$ Finally, pick $\mu =r_{n}/2.$
\end{proof}

Now we consider two cases:

\begin{enumerate}
\item  Suppose that $\widehat{y}_{i_{0}}^{L\prime }=\widehat{y}_{i_{0},2}^{L}$
or $\widehat{y}_{i_{0}}^{L\prime }>\widehat{y}_{i_{0},2}^{L}$ and $\{\widehat{%
x}_{i_{0}}^{L}\}\times [\widehat{y}_{i_{0},2}^{L},\widehat{y}%
_{i_{0}}^{L\prime }]\subset \partial _{\mathbf{T}^{1}\times \mathbb{R}%
}L_{i_{0}}\backslash S_{i_{0}}.$

In this case, for any $\widehat{y}_{i_{0},1}^{L}<t<\widehat{y}_{i_{0},2}^{L},$
there exists $\mu =\mu (t)>0$ such that the segment $[\widehat{x}%
_{i_{0}}^{L}-\mu ,\widehat{x}_{i_{0}}^{L}[\times \{t\}$ is contained in $%
L_{i_{0}}.$ For any $\kappa >0,$ decreasing $\mu >0$ from the above claim if
necessary, we get that for some $\widehat{y}_{i_{0}}^{L\prime }<\widehat{y}<%
\widehat{y}_{i_{0}}^{L\prime }+\kappa ,$ the segment $[\widehat{x}%
_{i_{0}}^{L}-\mu ,\widehat{x}_{i_{0}}^{L}+\mu ]\times \{\widehat{y}\}$ is
contained in $\left( \overline{L_{i_{0}}}\right) ^{c}.$ But this implies
that there exists a connected component $K_{2i_{0}}^{\text{Case 1,L}}$ of $%
\left( \partial _{\mathbf{T}^{1}\times \mathbb{R}}L_{i_{0}}\backslash
S_{i_{0}}^{L}\right) \cap $ $[\widehat{x}_{i_{0}}^{L}-\mu ,\widehat{x}%
_{i_{0}}^{L}]\times ]t,\widehat{y}[$ which intersects both vertical pieces of the boundary
of $[\widehat{x}_{i_{0}}^{L}-\mu ,\widehat{x}_{i_{0}}^{L}]\times %
]t,\widehat{y}[.$

\item  Here we treat the remaining case, that is, we assume that $\widehat{y}%
_{i_{0},2}^{L}<\widehat{y}_{i_{0}}^{L\prime }$ and some point in $\{\widehat{x%
}_{i_{0}}^{L}\}\times ]\widehat{y}_{i_{0},2}^{L},\widehat{y}_{i_{0}}^{L\prime
}[$ belongs to $L_{i_{0}}.$

From the definition of $\widehat{y}_{i_{0}}^{L\prime },$ the segment $\{%
\widehat{x}_{i_{0}}^{L}\}\times [\widehat{y}_{i_{0},1}^{L},\widehat{y}%
_{i_{0}}^{L\prime }]$ is contained $\overline{L_{i_{0}}}.$ Therefore, it is
contained in $\mathcal{U}^{-}(\widehat{C}_{r})\cup \widehat{C}_{r}.$ As $(%
\widehat{x}_{i_{0}}^{L},\widehat{y}_{i_{0},1}^{L})\in \partial _{\mathbf{T}%
^{1}\times \mathbb{R}}V_{\infty }^{-},$ we get that $\{\widehat{x}%
_{i_{0}}^{L}\}\times [\widehat{y}_{i_{0},1}^{L},\widehat{y}_{i_{0}}^{L\prime
}]\subset \partial _{\mathbf{T}^{1}\times \mathbb{R}}V_{\infty }^{-}.$ So,
it is not possible that some point in $L_{i_{0}}$ belongs to $\{\widehat{x}%
_{i_{0}}^{L}\}\times ]\widehat{y}_{i_{0},2}^{L},\widehat{y}_{i_{0}}^{L\prime
}[,$ because $\{\widehat{x}_{i_{0}}^{L}\}\times [\widehat{y}_{i_{0},1}^{L},%
\widehat{y}_{i_{0}}^{L\prime }]$ is connected and contained in $\partial _{%
\mathbf{T}^{1}\times \mathbb{R}}V_{\infty }^{-}$ and $L_{i_{0}}$ is a
connected component of $\mathcal{U}^{-}(\widehat{C}_{r})\setminus V_{\infty }^{-}.$ Therefore, this
case does not appear when studying the connected components of 
$\mathcal{U}^{-}(\widehat{C}_{r})\setminus V_{\infty }^{-}.$

This is the reason why, instead of considering the complement of $\mathrm{Cl}_{%
\mathbf{T}^{1}\times \mathbb{R}}V(\widehat{C}_{r})$ in $\mathcal{U}^{-}(\widehat{C}_{r}),$ 
we studied the
complement of $V_{\infty }^{-}$ in $\mathcal{U}^{-}(\widehat{C}_{r}).$ 
When considering the first set, 
this second case could appear
and it corresponds to circloids $\widehat{C}_{r}$ for which the function $%
\widehat{C}\rightarrow L(\widehat{C})$ is not continuous in the Hausdorff
topology at $\widehat{C}=\widehat{C}_{r}.$ The sequence $\widehat{C}_{s_n}$
which is above $\widehat{C}_{r}$ and converges to $K_{r}^{+},$ for large $%
n\geq 0,$ divides a connected component of $L(\widehat{C}_{r})$ into
connected components of $L(\widehat{C}_{s_n})$ and of $R(\widehat{C}_{s_n}).$
\end{enumerate}

Finally, given $\varepsilon >0,$ pick $n_{R},n_{L}\geq 0$ such that 
\begin{equation}
\left| \mathrm{Leb}_{\mathbf{T}^{1}\times \mathbb{R}}(R)-\left( \Sigma
_{i=1}^{n_{R}}\mathrm{Leb}_{\mathbf{T}^{1}\times \mathbb{R}}(R_{i})\right)
\right| <\varepsilon /4  \label{finiteR}
\end{equation}
\[
\text{and} 
\]
\begin{equation}
\left| \mathrm{Leb}_{\mathbf{T}^{1}\times \mathbb{R}}(L)-\left( \Sigma
_{i=1}^{n_{L}}\mathrm{Leb}_{\mathbf{T}^{1}\times \mathbb{R}}(L_{i})\right)
\right| <\varepsilon /4.  \label{finiteL}
\end{equation}
For any $1\leq i\leq n_{R},$ there exists $\beta _{i}^{R}>0$ such that 
for $0<\beta \leq\beta _{i}^{R},$ a
segment of the form $\{\widehat{x}_{i}^{R}+\beta \}\times ]\rho
_{i,1}^{R},\rho _{i,2}^{R}[$ is contained in $R_{i},$ its endpoints belong to 
$\partial _{\mathbf{T}^{1}\times \mathbb{R}}R_{i},$ and the connected
component of $R_{i}\backslash \{\widehat{x}_{i}^{R}+\beta \}\times ]\rho
_{i,1}^{R},\rho _{i,2}^{R}[$ which avoids $S_{i}^{R}$ contains a subset $R_{i}^{*}$
which satisfies  

$$
 dist(R_{i}^{*},\partial _{\mathbf{T}^{1}\times \mathbb{R}}R_{i})>\beta
$$
$$
 \text{ and } 
$$
$$
\left| \mathrm{Leb}_{\mathbf{T}^{1}\times \mathbb{R}}(R_{i}^{*})-\mathrm{Leb}%
_{\mathbf{T}^{1}\times \mathbb{R}}(R_{i})\right| <\varepsilon /\left(
10n_{R}\right) . 
$$
Think of $R_{i}^{*}$ as the interior of a Jordan curve contained in  $R_{i},$ 
following its boundary closely. 
As the only possibility for $R_{i}$ is $\widehat{y}_{i}^{R\prime }=\widehat{y}_{i,2}^{R}$ 
or $\widehat{y}_{i}^{R\prime }>\widehat{y}_{i,2}^{R}$ and 
$\{\widehat{x}_{i}^{R}\}\times [\widehat{y}_{i,2}^{R},\widehat{y}_{i}^{R\prime }]\subset \partial _{\mathbf{T}^{1}\times %
\mathbb{R}}R_{i}$ (see expression (\ref{defylin})), 
after removing 
such a small neighborhood of $\partial _{\mathbf{T}^{1}\times \mathbb{R}}R_{i}$, 
the amount of area removed converges to zero as $\beta$ converges to zero.
Of course we are using that $\mathrm{Leb}_{\mathbf{T}^{1}\times \mathbb{R}}(\widehat{C}_{r})=0$ 
implies that 
\[
\partial _{\mathbf{T}^{1}\times \mathbb{R}}R_{i}\text{ has zero Lebesgue
measure.} 
\]
Analogously, for $1\leq i\leq n_{L},$ there exists $\beta _{i}^{L}>0$ such that 
for $0<\beta \leq\beta _{i}^{L},$ a
segment of the form $\{\widehat{x}_{i}^{L}-\beta \}\times ]\rho
_{i,1}^{L},\rho _{i,2}^{L}[$ is contained in $L_{i},$ its endpoints belong to 
$\partial _{\mathbf{T}^{1}\times \mathbb{R}}L_{i},$ and the connected
component of $L_{i}\backslash \{\widehat{x}_{i}^{L}-\beta \}\times ]\rho
_{i,1}^{L},\rho _{i,2}^{L}[$ which avoids $S_{i}^{L}$ contains a subset $L_{i}^{*}$
which satisfies  

$$
 dist(L_{i}^{*},\partial _{\mathbf{T}^{1}\times \mathbb{R}}L_{i})>\beta
$$
$$
 \text{ and } 
$$
$$
\left| \mathrm{Leb}_{\mathbf{T}^{1}\times \mathbb{R}}(L_{i}^{*})-\mathrm{Leb}%
_{\mathbf{T}^{1}\times \mathbb{R}}(L_{i})\right| <\varepsilon /\left(
10n_{L}\right) . 
$$

%

So, pick 
$$
0<\beta <\min \{\min\limits_{1\leq i\leq n_{L}}\{\beta
_{i}^{L}\},\min\limits_{1\leq i\leq n_{R}}\{\beta _{i}^{R}\}\}
$$
 and $n_{0}>0$
sufficiently large such that if $n\geq n_{0},$ then 
\[
V(\widehat{C}_{s_n})\subset \beta /4\text{-neighborhood of }V_{\infty }. 
\]
This is possible because $V(\widehat{C}_{s_n})$ converges in the Hausdorff
topology to $V_{\infty }.$ 
From the choice of $n_0$ above, $V(\widehat{C}_{s_n})$ does not intersect any of the $R_{i}^{*}$
and $L_{i}^{*}.$ Therefore they are still in the complement of $\mathrm{Cl}_{\mathbf{T%
}^{1}\times \mathbb{R}}V(\widehat{C}_{s_n})$ in $\mathcal{U}^{-}(\widehat{C}_{s_n}).$ 
And all $S_{i}^{L}$ and $%
S_{i}^{R}$ are contained in $V(\widehat{C}_{s_n}).$ So, 
\[
\mathrm{Leb}_{\mathbf{T}^{1}\times \mathbb{R}}(R(\widehat{C}_{s_n}))>\Sigma
_{i=1}^{n_{R}}\mathrm{Leb}_{\mathbf{T}^{1}\times \mathbb{R}}(R_{i}^{*})\text{
and }\mathrm{Leb}_{\mathbf{T}^{1}\times \mathbb{R}}(L(\widehat{C}%
_{s_{n}}))>\Sigma _{i=1}^{n_{L}}\mathrm{Leb}_{\mathbf{T}^{1}\times \mathbb{R}%
}(L_{i}^{*}). 
\]

And we finally obtain that for all $n\geq n_0$, 
\begin{equation}
\mathrm{Leb}_{\mathbf{T}^{1}\times \mathbb{R}}(R(\widehat{C}_{s_n}))\geq 
\mathrm{Leb}_{\mathbf{T}^{1}\times \mathbb{R}}(R)-\varepsilon \text{ and }%
\mathrm{Leb}_{\mathbf{T}^{1}\times \mathbb{R}}(L(\widehat{C}_{s_n}))\geq 
\mathrm{Leb}_{\mathbf{T}^{1}\times \mathbb{R}}(L)-\varepsilon ,
\label{1e2juntas}
\end{equation}
where $L$ is the collection of all the left components of $\mathcal{U}^{-}(\widehat{C}_{r})\setminus V^{-}_{\infty },$ and $R$ is the collection of all the right ones.

Comparing areas, we obtain that:
\[
\mathrm{Leb}_{\mathbf{T}^{1}\times \mathbb{R}}\left( [\widehat{C}_{r},%
\widehat{C}_{s_{n}}]\right) +\mathrm{Leb}_{\mathbf{T}^{1}\times \mathbb{R}%
}\left( R\right) +\mathrm{Leb}_{\mathbf{T}^{1}\times \mathbb{R}}\left(
L\right) +\mathrm{Leb}_{\mathbf{T}^{1}\times \mathbb{R}}\left( V_{\infty
}^{-}\cap \mathbf{T}^{1}\times [a_{r}-1,+\infty [\right) = 
\]

\[
=\mathrm{Leb}_{\mathbf{T}^{1}\times \mathbb{R}}\left( V(\widehat{C}_{s_n})\cap 
\mathbf{T}^{1}\times [a_{r}-1,+\infty [\right) +\mathrm{Leb}_{\mathbf{T}%
^{1}\times \mathbb{R}}(R(\widehat{C}_{s_n}))+\mathrm{Leb}_{\mathbf{T}%
^{1}\times \mathbb{R}}(L(\widehat{C}_{s_n})). 
\]

As $V(\widehat{C}_{s_n})\supset V_{\infty }^{-},$ we obtain that 
$$\mathrm{Leb}%
_{\mathbf{T}^{1}\times \mathbb{R}}\left( V(\widehat{C}_{s_n})\cap \mathbf{T}%
^{1}\times [a_{r}-1,+\infty [\right) \geq \mathrm{Leb}_{\mathbf{T}^{1}\times %
\mathbb{R}}\left( V_{\infty }^{-}\cap \mathbf{T}^{1}\times [a_{r}-1,+\infty
[\right).
$$ 

And, the fact that $\mathrm{Leb}_{\mathbf{T}^{1}\times \mathbb{R}}\left( [\widehat{C}_{r},%
\widehat{C}_{s_{n}}]\right) \rightarrow 0$ as $n\rightarrow \infty ,$ 
finally implies that:
$$
\limsup_{n \to \infty} \left[\mathrm{Leb}_{\mathbf{T}^{1}\times \mathbb{R}}(R(\widehat{C}_{s_n}))+\mathrm{%
Leb}_{\mathbf{T}^{1}\times \mathbb{R}}(L(\widehat{C}_{s_n}))\right]\leq 
$$
$$\leq \mathrm{Leb}_{%
\mathbf{T}^{1}\times \mathbb{R}}\left( R\right) +\mathrm{Leb}_{\mathbf{T}%
^{1}\times \mathbb{R}}\left( L\right) . 
$$
As $\varepsilon >0$ is arbitrary, expression (\ref{1e2juntas}) concludes the
proof of the proposition: 

\vskip0.1truecm
$\lim\limits_{n\rightarrow \infty }\mathrm{Leb}_{\mathbf{T}^{1}\times \mathbb{R}}(R(%
\widehat{C}_{s_n}))=\mathrm{Leb}_{\mathbf{T}^{1}\times \mathbb{R}}\left(
R\right)$ and $\lim\limits_{n\rightarrow \infty }\mathrm{Leb}_{\mathbf{T}%
^{1}\times \mathbb{R}}(L(\widehat{C}_{s_n}))=\mathrm{Leb}_{\mathbf{T}%
^{1}\times \mathbb{R}}\left( L\right)$ 
\end{proof}

So, the previous proposition implies that, 
$\lambda _{L}=\mathrm{Leb}_{\mathbf{T}^{1}\times \mathbb{R}}\left(L\right)$ and
 $\lambda _{R}=\mathrm{Leb}_{\mathbf{T}^{1}\times \mathbb{R}}\left(R\right).$  

Thus,  
\[
\begin{array}{c}
\lim\limits_{i\to \infty }\mathrm{Leb}_{\mathbf{T}^{1}\times \mathbb{R}}(R(%
\widehat{C}_{i}^{+}))= \lim\limits_{i\to \infty }\mathrm{Leb}_{\mathbf{T}^{1}\times \mathbb{R}}(R(%
\widehat{C}_{i}^{-}))= \lambda _{R}\\ 
\text{and} \\ 
\lim\limits_{i\to \infty }\mathrm{Leb}_{\mathbf{T}^{1}\times \mathbb{R}}(L(%
\widehat{C}_{i}^{+}))= \lim\limits_{i\to \infty }\mathrm{Leb}_{\mathbf{T}^{1}\times \mathbb{R}}(L(%
\widehat{C}_{i}^{-}))=\lambda _{L}. \\ 
\end{array}
\]

To finish the proof that $R(\widehat{C}_r)$ is empty, let us analyze two
cases:

\begin{enumerate}
\item  $\lambda _{R}\leq \mathrm{Leb}_{\mathbf{T}^{1}\times \mathbb{R}}(R(\widehat{C}_{r}));$

In this case, we use Lemma \ref{lemTC00}, which says that $\mathrm{Leb}_{%
\mathbf{T}^{1}\times \mathbb{R}}(R(\widehat{f}^{n}(\widehat{C}_{r})))>%
\mathrm{Leb}_{\mathbf{T}^{1}\times \mathbb{R}}(R(\widehat{C}_{r}))+\delta
_{1},$ for all integers $n>0$ and some constant $\delta _{1}>0.$ As $%
\widehat{C}_{i}^{+}:=\widehat{f}^{n_{i}^{+}}(\widehat{C}_{r})+(0,p_{i}^{+}),$
we get that 
\[
\mathrm{Leb}_{\mathbf{T}^{1}\times \mathbb{R}}(R(\widehat{C}_{i}^{+}))=%
\mathrm{Leb}_{\mathbf{T}^{1}\times \mathbb{R}}(R(\widehat{f}^{n_{i}^{+}}(%
\widehat{C}_{r})))>
\]
\[
>\mathrm{Leb}_{\mathbf{T}^{1}\times \mathbb{R}}(R(\widehat{C}_{r}))+\delta
_{1}.
\]

Taking the limit $i\rightarrow \infty ,$ we obtain that $\lambda _{R}\geq 
\mathrm{Leb}_{\mathbf{T}^{1}\times \mathbb{R}}(R(\widehat{C}_{r}))+\delta
_{1},$ a contradiction with our assumption.

\item  $\lambda _{R}>\mathrm{Leb}_{\mathbf{T}^{1}\times \mathbb{R}}(R(\widehat{C}_{r}));$

By Proposition \ref{areaaa}, $\lim_{i\to \infty }\mathrm{Leb}_{\mathbf{T}^{1}\times \mathbb{R}}(R(\widehat{C}%
_{i}^{-}))=\lambda _{R}.$ Now we use the fact that for all integers $n>0$
and for any circloid $\widehat{C}\subset \mathbf{T}^{1}\times \mathbb{R},$ $%
R(\widehat{f}^{n}(\widehat{C}))\supset \widehat{f}^{n}(R(\widehat{C})).$ So, 
$R(\widehat{C}_{r})=R(\widehat{f}^{n_{i}^{-}}(\widehat{C}%
_{i}^{-}))-(0,p_{i}^{-})\supset \widehat{f}^{n_{i}^{-}}(R(\widehat{C}%
_{i}^{-}))-(0,p_{i}^{-}).$ Computing the Lebesgue measure on both sides, and
taking the limit $i\rightarrow \infty $ gives:\ $\mathrm{Leb}_{\mathbf{T}%
^{1}\times \mathbb{R}}(R(\widehat{C}_{r}))\geq \mathrm{Leb}_{\mathbf{T}%
^{1}\times \mathbb{R}}(\widehat{f}^{n_{i}^{-}}(R(\widehat{C}_{i}^{-})))=%
\mathrm{Leb}_{\mathbf{T}^{1}\times \mathbb{R}}(R(\widehat{C}%
_{i}^{-}))\rightarrow \lambda _{R},$ again a contradiction with our
assumption.
\end{enumerate}

So $R(\widehat{C}_r)$ must be empty. In a similar way, we show that $L( 
\widehat{C}_r)$ must also be empty.
\end{proof}

In this way each circloid is almost a graph.

\paragraph{Conclusion of the Proof of Theorem A}

We have just established that for any circloid $\widehat{C}_{r}$ in the family built using
the bounded displacement, the subsets $R(\widehat{C}_{r})=L(\widehat{C}_{r})=\emptyset .$ 
In the next lemma, we conclude the proof of existence of the foliation. 
In particular, we show that 
$\widehat{C}_{r}=\mathrm{Graph}(\widehat{\phi }_{r})$, for some Lipschitz
continuous function $\widehat{\phi }_{r}:\mathbf{T}^{1}\to \mathbb{R}$
satisfying the following property 
\[
\widehat{\phi }_{r+1}(x)=\widehat{\phi }_{r}(x)+1\quad \text{for all}\
x\in \mathbf{T}^{1}. 
\]
The fact that $R(\widehat{C}_{r})=L(\widehat{C}_{r})=\emptyset $ implies
that 
\begin{equation}
\partial _{\mathbf{T}^{1}\times \mathbb{R}}\mathcal{U}^{-}(\widehat{C}%
_{r})=\partial _{\mathbf{T}^{1}\times \mathbb{R}}V(\widehat{C}_{r})=\widehat{%
C}_{r}.  \label{eq.TC19}
\end{equation}
This places us very close to proving that circloids are graphs.
Nevertheless, it is still conceivable that $\widehat{C}_{r}$ contains
vertical segments. 

\begin{lemma}
\label{lemTC07} For all $r\in \mathbb{R},$ each circloid $\widehat{C}_{r}$
contained in $\widehat{h}^{-1}(r),$ where $\widehat{h}:\mathbf{T}^{1}\times %
\mathbb{R}\rightarrow \mathbb{R}$ is the continuous surjective map that
satisfies $\widehat{h}(\widehat{z}+(0,1))=\widehat{h}(\widehat{z})+1$ and $%
\widehat{h}\circ \widehat{f}(\widehat{z})=\widehat{h}(\widehat{z})+\alpha ,$
is given by the $\mathrm{Graph}$ of a Lipschitz continuous function $%
\widehat{\phi }_{r}:\mathbf{T}^{1}\to \mathbb{R}$ satisfying $\widehat{\phi }%
_{r+1}(x)=\widehat{\phi }_{r}(x)+1.$ The Lipschitz constant can be chosen
uniformly for all $r,$ and $r\rightarrow \mathrm{Graph}(\widehat{\phi }_{r})$
is a foliation of $\mathbf{T}^{1}\times \mathbb{R}$ which clearly induces a
foliation of $\mathbf{T}^{2},$ because of its equivariance.
\end{lemma}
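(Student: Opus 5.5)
We will derive the graph property from \eqref{eq.TC19} together with the twist condition, and then assemble the foliation using the monotonicity and equivariance of the family $\{\widehat{C}_r\}$.

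\emph{Step 1: each $\widehat{C}_r$ meets every vertical in a single point.} Since $R(\widehat{C}_r)=L(\widehat{C}_r)=\emptyset$, we have $\mathcal{U}^-(\widehat{C}_r)=V(\widehat{C}_r)$. Hence for every $\widehat{x}$ the set $\mathcal{U}^-(\widehat{C}_r)\cap(\{\widehat{x}\}\times\mathbb{R})$ is a half-line $\{\widehat{x}\}\times]-\infty,\phi^-_r(\widehat{x})[$, where $\phi^-_r$ is lower semicontinuous. The same reasoning applies to $\widehat{C}_r$ viewed from above. Turning the cylinder upside down, $\mathcal{U}^+(\widehat{C}_r)$ meets each vertical in a half-line $]\phi^+_r(\widehat{x}),+\infty[$. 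This requires checking that the argument of Lemma~\ref{lemacorrec}, carried out with sequences converging from below and using Lemma~\ref{lemTC03}, also shows that the right and left components above $\widehat{C}_r$ are empty. Consequently $\widehat{C}_r\cap(\{\widehat{x}\}\times\mathbb{R})=\{\widehat{x}\}\times[\phi^-_r,\phi^+_r]$. It remains to exclude vertical segments, that is, to prove $\phi^-_r=\phi^+_r$.

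Suppose $\widehat{C}_r$ contains a vertical segment $\{\widehat{x}_0\}\times[y_1,y_2]$. We then apply $\widehat{f}$. By the twist condition \eqref{eqBD03}, its image is a curve whose first coordinate is strictly increasing in $y$, with horizontal spread at least $k_{tw}(y_2-y_1)$. Since $\widehat{f}(\widehat{C}_r)=\widehat{C}_{r+\alpha}$ is again a cofrontier whose lower complement is a $V$-set, this image curve must lie in a circloid that meets each vertical in a segment. To exploit this we use the backward map, for which the image of a vertical segment tilts the opposite way. The set $\widehat{f}^{-1}(\mathcal{U}^-(\widehat{C}_{r+\alpha}))=\mathcal{U}^-(\widehat{C}_r)$ must coincide with its own $V$-set. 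The strictly monotone curve $\widehat{f}(\{\widehat{x}_0\}\times[y_1,y_2])$, bordered below by $\widehat{f}(V^-)$ and lying on $\partial\mathcal{U}^-(\widehat{C}_{r+\alpha})$, then creates points of $\mathcal{U}^-(\widehat{C}_{r+\alpha})$ that lie directly above points of $\widehat{C}_{r+\alpha}$, namely the images of points immediately to one side of the segment. This contradicts $\mathcal{U}^-=V$. We will make this precise following Fathi's Lemma~\ref{lemFTU01}: the image of a vertical segment in $\partial V$ would produce a nonempty $R(\widehat{f}(\widehat{C}_r))$, which is impossible. This is the step we expect to be the main obstacle.

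\emph{Step 2: the Lipschitz bound.} We follow Birkhoff's cone argument. The graph $\widehat{C}_r=\widehat{f}^{-1}(\widehat{C}_{r+\alpha})$ is the image under $\widehat{f}^{-1}$ of a graph. If $\widehat{\phi}_r$ had a large slope somewhere, meaning two nearby points with $|\Delta y|>K|\Delta x|$, then applying $\widehat{f}$ (for positive slope) or $\widehat{f}^{-1}$ (for negative slope) would produce, by \eqref{eqBD03} and \eqref{eqBD04}, a pair of points on the image circloid with the same first coordinate but different heights. This contradicts Step 1. Hence we may take $K=K(k_{tw},bound,\|D\widehat{f}^{\pm1}\|)$, which is uniform in $r$.

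\emph{Step 3: the foliation.} The relation $\widehat{\phi}_{r+1}=\widehat{\phi}_r+1$ follows from \eqref{eqSC05}, and strict monotonicity in $r$ follows from \eqref{eqSC08}. It remains to show that every point lies on some graph and that $r\mapsto\widehat{\phi}_r$ is continuous. Given $\widehat{z}$, set $r=\widehat{h}(\widehat{z})$. For $s_n\to r^{\pm}$, uniform Lipschitz equicontinuity together with Arzel\`a--Ascoli lets us extract limit graphs. These limits lie in $\widehat{h}^{-1}(r)$ and bound the region between them. If the limits from above and from below differed, or did not pass through $\widehat{z}$, the region between them would be a nonempty open subset of $\widehat{h}^{-1}(r)$, contradicting Proposition~\ref{areazero}. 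Therefore both limits equal $\widehat{\phi}_r$, $\widehat{h}^{-1}(r)=\widehat{C}_r$, and the family is continuous and covers the cylinder. It thus forms an $\widehat{f}$-invariant foliation, which projects to $\mathbf{T}^2$ by equivariance.
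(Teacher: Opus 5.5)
Your proposal is correct. Steps 1--2 are essentially the paper's Parts 1--2, with a few points to tidy; Step 3 takes a genuinely different and shorter route.

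\textbf{Step 1.} The ``upside-down'' version of Lemma~\ref{lemacorrec} is unnecessary. Since $\widehat C_r$ is a cofrontier and $\mathcal U^-(\widehat C_r)=V(\widehat C_r)$, you already have $\partial V(\widehat C_r)=\widehat C_r$; this is \eqref{eq.TC19}. The half-line below any point of $\partial V$ lies in $\mathrm{Cl}\,V=\mathcal U^-(\widehat C_r)\cup\widehat C_r$. That alone shows $\widehat C_r$ meets each vertical in a segment $[\phi_r^-,\phi_r^+]$.

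Your twist argument against such a segment mirrors the paper's. The paper takes a point of $\mathcal U^+(\widehat C_r)$ next to the midpoint. It applies $\widehat f$ if the point is on the right and $\widehat f^{-1}$ if it is on the left, and the image lands directly below a point of $\widehat C_{r\pm\alpha}=\partial V(\widehat C_{r\pm\alpha})$. You take a point of $\mathcal U^-(\widehat C_r)$. It goes under $\widehat f$ if on the left and under $\widehat f^{-1}$ if on the right, landing directly above a point of $\widehat C_{r\pm\alpha}$, which contradicts $\mathcal U^-=V$ at once. Both work. Make the left/right case split explicit, and note that the point exists because $\widehat C_r=\partial\mathcal U^-(\widehat C_r)$. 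Fathi's lemma is not needed here.

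\textbf{Step 2.} You have the roles reversed. For a right twist, a large positive slope is destroyed by $\widehat f^{-1}$ and a large negative slope by $\widehat f$, as in the paper. This does not affect the argument.

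\textbf{Step 3.} The paper obtains continuity in $r$ by the same sandwich $\widehat\phi_{r^-}\le\widehat\phi_r\le\widehat\phi_{r^+}$ together with zero-measure fibres. It then gets that the leaves cover the torus from Proposition~\ref{transitivao}, which says every open invariant set is dense (via Brouwer's lemma and irrationality of the rotation set), plus closedness of the union.

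You instead apply the sandwich at $r=\widehat h(\widehat z)$ for an arbitrary point $\widehat z$. For this you need one observation missing from your write-up: $\widehat z\in\mathcal U^+(\widehat C_s)$ for $s<r$, and $\widehat z\notin\mathcal U^+(\widehat C_s)$ for $s>r$. This follows from $\widehat h$ being a supremum and the nesting $\mathcal U^+(\widehat C_{s'})\subset\mathcal U^+(\widehat C_s)$ for $s<s'$. It gives $\widehat\phi_{r^-}(p_1\widehat z)\le p_2(\widehat z)\le\widehat\phi_{r^+}(p_1\widehat z)$.

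With that added, your route avoids Proposition~\ref{transitivao} entirely and yields the sharper identity $\widehat h^{-1}(r)=\widehat C_r$. The paper's route has the side benefit of proving transitivity, which it reuses for Theorem~\ref{thm:C}.
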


\begin{proof}
The proof of this lemma is quite standard; it is contained in the proof of
Birkhoff's Invariant Curve Theorem, plus an application of Arzel\'a-Ascoli's
Theorem. Let us divide the argument into 4 parts:

\vskip0.3truecm

\textit{Part 1. }$\widehat{C}_{r}$ is the graph of a continuous function;

\vskip0.1truecm

If $p_{1}\mid _{\widehat{C}_{r}}$ is not injective, then for some $\widehat{x%
}\in \mathbf{T}^{1},$ there exist real numbers $t_{1}<t_{2}$ such that $\{%
\widehat{x}\}\times [t_{1},t_{2}]$ is contained in $\widehat{C}_{r},$ and
for $\widehat{t}=(t_{1}+t_{2})/2$ and some $0<\varepsilon<(t_2-t_1)/4,$ 
either on the left or right part of $B_{\varepsilon }((%
\widehat{x},\widehat{t}))\backslash \{\widehat{x}\}\times [t_{1},t_{2}],$
there exists a point $\widehat{z}_{out}$ which belongs to $\mathcal{U}^{+}(\widehat{C}_{r}).$ 
Depending whether $\widehat{z}_{out}$ is to the left or to the right of the vertical
$\{\widehat{x}\}\times [t_{1},t_{2}],$ we get from the twist
condition that either $\widehat{f}^{-1}(\widehat{z}_{out})\subset 
\mathcal{U}^{-}(\widehat{f}^{-1}(\widehat{C}_{r}))$ or $\widehat{f}(\widehat{z}_{out}) \subset
 \mathcal{U}^{-}(\widehat{f}(\widehat{C}_{r})),$ 
because $\widehat{f}^{\pm 1}(\widehat{C}_{r})=\widehat{C}_{r\pm\alpha }$ also
satisfy $R(\widehat{C}_{r\pm \alpha })=L(\widehat{C}_{r\pm \alpha })=\emptyset $
and we are assuming that $\widehat{f}$ is a right twist map. But this is a
contradiction, since for $\widehat{z}_{out}\in \mathcal{U}^{+}(\widehat{C}_{r}),$
$\widehat{f}^{\pm 1}(\widehat{z}_{out})$ belongs to $\mathcal{U}^{+}(\widehat{f}^{\pm 1}(%
\widehat{C}_{r})).$ So, for all $r\in \mathbb{R},$ each circloid $\widehat{C}%
_{r}$ is given by the $\mathrm{Graph}$ of a continuous function $\widehat{%
\phi }_{r}:\mathbf{T}^{1}\to \mathbb{R}$ satisfying $\widehat{\phi }_{r+1}(%
\widehat{x})=\widehat{\phi }_{r}(\widehat{x})+1$ for all $\widehat{x}\in $ $%
\mathbf{T}^{1}$ because $\widehat{C}_{r+1}=\widehat{C}_{r}+(0,1).$

\vskip0.2truecm

\textit{Part 2. }There exists a constant $K:=K(f)>0$ such that all $\widehat{%
\phi }_{r}$ as above are $K$-Lipschitz;

\vskip0.1truecm

Fixed a lift $\widetilde{f}\in \mathrm{Diff}_{\mathrm{tw}}^{1}(\mathbb{R}%
^{2})$ of $\widehat{f},$ it can be written as follows 
\[
\widetilde{f}(\widetilde{x},\widetilde{y})=(\widetilde{x}+\ell \widetilde{y}%
+\varphi _{1}(\widetilde{x},\widetilde{y}),\widetilde{y}+\varphi _{2}(%
\widetilde{x},\widetilde{y})),
\]
for all $(\widetilde{x},\widetilde{y})\in \mathbb{R}^{2}$, where $\ell \in %
\mathbb{Z}\setminus \{0\}$, and the functions $\widetilde{\varphi }_{i}:%
\mathbb{R}^{2}\to \mathbb{R}$, $i=1,2$ are $\mathbb{Z}^{2}$-periodic.
Similarly, 
\[
\widetilde{f}^{-1}(\widetilde{x},\widetilde{y})=(\widetilde{x}-\ell 
\widetilde{y}+\varphi _{1}^{*}(\widetilde{x},\widetilde{y}),\widetilde{y}%
+\varphi _{2}^{*}(\widetilde{x},\widetilde{y})),\quad \forall \ (\widetilde{x%
},\widetilde{y})\in \mathbb{R}^{2},
\]
where the functions $\widetilde{\varphi }_{i}^{*}:\mathbb{R}^{2}\to %
\mathbb{R}$ are also $\mathbb{Z}^{2}$-periodic for $i=1,2$.

As we assumed throughout this paper, suppose $f$ is a right twist map,
that is $\ell >0$ and for all $\widetilde{z}=(\widetilde{x},\widetilde{y}%
)\in \mathbb{R}^{2}$, the following properties hold:

\begin{itemize}
\item  (twist condition) for a constant $k_{tw}>0$ the following inequalities hold (the
condition on the inverse follows from our assumption of area preservation): 
\[
\dfrac{\partial p_{1}\circ \widetilde{f}(\widetilde{z})}{\partial \widetilde{%
y}}>k_{tw}\quad \text{and}\quad \dfrac{\partial p_{1}\circ \widetilde{f}%
^{-1}(\widetilde{z})}{\partial \widetilde{y}}<-k_{tw}
\]

\item  there exists a constant $b>0$ such that 
\[
\left| \dfrac{\partial p_{1}\circ \widetilde{f}(\widetilde{z})}{\partial 
\widetilde{x}}\right| <b\quad \text{and}\quad \left| \dfrac{\partial
p_{1}\circ \widetilde{f}^{-1}(\widetilde{z})}{\partial \widetilde{x}}\right|
<b
\]
\end{itemize}

Our claim now is the following: For $K:=b/k_{tw},$ if a continuous function $\widehat{\phi }:\mathbf{T}%
^{1}\to \mathbb{R}$ is not $K$-Lipschitz, then either $\widehat{f}(\mathrm{Graph}(%
\widehat{\phi })),$ or $\widehat{f}^{-1}(\mathrm{Graph}(\widehat{\phi }))$
fails to be a graph.

Let $\widetilde{\phi }:\mathbb{R}\to \mathbb{R}$ be a lift of $\widehat{\phi 
}$, and let $\widetilde{f}$ be a lift of $\widehat{f}$. Suppose that there
exist $a>0$ and $\widetilde{x}\in \mathbb{R}$ such that $|\widetilde{\phi }(%
\widetilde{x}-a)-\widetilde{\phi }(\widetilde{x})|>Ka$. Hence, $\widetilde{%
\phi }$ is not $K$-Lipschitz. From our assumption, there are two
possibilities: either $\widetilde{\phi }(\widetilde{x}-a)>\widetilde{\phi }(%
\widetilde{x})+Ka$ or $\widetilde{\phi }(\widetilde{x})>\widetilde{\phi }(%
\widetilde{x}-a)+Ka$. The first case is dealt as follows:

\[
p_{1}\circ \widetilde{f}(\widetilde{x}-a,\widetilde{\phi }(\widetilde{x}%
-a))-p_{1}\circ \widetilde{f}(\widetilde{x},\widetilde{\phi }(\widetilde{x}%
))=
\]
\[
=p_{1}\circ \widetilde{f}(\widetilde{x}-a,\widetilde{\phi }(\widetilde{x}%
-a))-p_{1}\circ \widetilde{f}(\widetilde{x}-a,\widetilde{\phi }(%
\widetilde{x}))+p_{1}\circ \widetilde{f}(\widetilde{x}-a,\widetilde{\phi }(%
\widetilde{x}))-p_{1}\circ \widetilde{f}(\widetilde{x},\widetilde{\phi }(\widetilde{x}%
))>
\]

$$
>-b.a+k_{tw}.K.a>0,
$$
by the Mean Value Inequality.

Therefore, $\widetilde{f}(\mathrm{Graph}(\widetilde{\phi }))$ fails to be a
graph. Consequently, $\widehat{f}(\mathrm{Graph}(\widehat{\phi }))$ is not a
graph.

To treat the second case, we consider $\widetilde{f}^{-1}.$ Remember that we
are assuming that for some $a>0$ and $\widetilde{x}\in \mathbb{R},$ $%
\widetilde{\phi }(\widetilde{x})>\widetilde{\phi }(\widetilde{x}-a)+Ka$.
As before, 
\[
p_{1}\circ \widetilde{f}^{-1}(\widetilde{x},\widetilde{\phi }(\widetilde{x}%
))-p_{1}\circ \widetilde{f}^{-1}(\widetilde{x}-a,\widetilde{\phi }(%
\widetilde{x}-a))<-k_{tw}.K.a+b.a<0. 
\]
Thus, $\widetilde{f}^{-1}(\mathrm{Graph}(\widetilde{\phi }))$ is not a
graph, and therefore, $\widehat{f}^{-1}(\mathrm{Graph}(\widehat{\phi }))$ is
also not a graph.

And finally, as for all $r\in \mathbb{R},$ both $\widehat{f}(\mathrm{Graph}(%
\widehat{\phi }_{r}))=\mathrm{Graph}(\widehat{\phi }_{r+\alpha })$ and $%
\widehat{f}^{-1}(\mathrm{Graph}(\widehat{\phi }_{r}))=\mathrm{Graph}(%
\widehat{\phi }_{r-\alpha }),$ we obtain that the family $\{\widehat{\phi }%
_{r}\}$ is $K$-Lipschitz. Since each $\widehat{\phi }_{r}$ induces a
function $\phi _{r\text{ mod 1}}:\mathbf{T}^{1}\to \mathbf{T}^{1}$ under the
canonical projection, it follows that every circloid $C_{r\text{ mod 1}%
}\subset \mathbf{T}^{2}$ is the graph of a $K$-Lipschitz function.

\vskip0.2truecm
\textit{Part 3. }Proof of uniform continuity.
\vskip0.1truecm

For $t\in \mathbf{T}^{1},$ we have proved that the circloids $C_{t}$
actually coincide with $\mathrm{Graph}(\phi _{t}),$ where $\phi _{t}:\mathbf{%
T}^{1}\to \mathbf{T}^{1}$ is a $K$-Lipschitz family of functions. We aim to
show that the map $t\mapsto \phi _{t}\in C^{0}(\mathbf{T}^{1},\mathbf{T}^{1})
$ is continuous in the uniform topology. That is, for every $\varepsilon >0$%
, there exists $\delta >0$ such that whenever $d_{\mathbf{T}%
^{1}}(t,s)<\delta $, one has 
\[
\sup_{x\in \mathbf{T}^{1}}d_{\mathbf{T}^{1}}(\phi _{t}(x),\phi _{s}(x))<\varepsilon .
\]

Fixed some $t\in \mathbf{T}^{1},$ let $r\in \mathbb{R}$ be such that $r \text{ mod 1}=t$.
Consider sequences $s_{n}^{-}\uparrow r$ and $s_{n}^{+}\downarrow r$ monotonically 
converging to $r,$ the first sequence, converging from below and
the second, converging from above. As for every $x\in \mathbf{T}^{1},$ 
\[
\widehat{\phi} _{s_{n}^{-}}(x)<\widehat{\phi}_{s_{n+1}^{-}}(x)<...<\widehat{\phi}
_{r}(x)<...<\widehat{\phi}_{s_{n+1}^{+}}(x)<\widehat{\phi}_{s_{n}^{+}}(x)
\]
and all the above functions are $K$-Lipschitz, we get (from a kind of
Arzel\'a-Ascoli theorem), that $\widehat{\phi}_{s_{n}^{-}}$ converges uniformly to some $K$%
-Lipschitz function $\widehat{\phi}_{r^{-}}\leq \widehat{\phi}_{r}$ and $\widehat{\phi}_{s_{n}^{+}}$
converges uniformly to some $K$-Lipschitz function $\widehat{\phi}_{r^{+}}\geq \widehat{\phi}
_{r}.$ Clearly, the set 
\[
\{(x,y)\in \mathbf{T}^{1}\times \mathbb{R}:\widehat\phi _{r^{-}}(x)\leq y\leq \widehat{\phi} _{r^{+}}(x)\}
\]
is contained in $\widehat{h}^{-1}(r)\supset \mathrm{Graph}(\widehat{\phi}_{r}).$ As we know
that this former set has zero Lebesgue measure, we get that 
$\widehat{\phi}_{r^{-}}=\widehat{\phi}_{r}=\widehat{\phi}_{r^{+}}.$ So, for any sequence $s_{n}\rightarrow r,$
the corresponding functions $\widehat{\phi}_{s_{n}}$ converge uniformly to $\widehat{\phi}_{r},$ 
that is, the uniform continuity is proved in $\mathbf{T}^{1}\times \mathbb{R}.$ Projecting everything under
$\tau,$ proves the statement in the torus.

\vskip0.2truecm
\textit{Part 4. }Existence of the foliation.
\vskip0.1truecm

The first thing we point out here is the following: Under our hypothesis, 
\begin{equation}
\mathbf{T}^{2}=\overline{\bigcup_{t\in \mathbf{T}^{1}}\mathrm{Graph}(\phi
_{t})}.  \label{toroetudo}
\end{equation}
By contradiction, if it is not, then there exists an open ball $B$ which
avoids the $f$-invariant set 
$\overline{\bigcup_{t\in \mathbf{T}^{1}}\mathrm{Graph}(\phi _{t})}.$
The union of all iterates of $B$ under $f$ is an open invariant subset of 
$\mathbf{T}^{2},$ denoted $B_{sat}.$ As $f$ preserves Lebesgue measure, every
connected component of $B_{sat}$ is $f$-periodic. Moreover, the next proposition gives the 
desired contradiction: $B_{sat}$ needs to be fully essential, and at the same time, 
it avoids $\overline{\bigcup_{t\in \mathbf{T}^{1}}\mathrm{Graph}(\phi _{t})}.$ 
And this is not possible.

\begin{proposition}
\label{transitivao} Under our hypotheses, every open $f$-invariant subset $M$
of $\mathbf{T}^{2}$ is fully essential and dense. In particular, $f$ is
transitive.
\end{proposition}

\begin{proof}

First suppose that some  connected component $M^{\prime }$ of $M$ is
homotopically trivial. Then $Filled(M^{\prime })$ is a $f$-periodic open disk, 
and as $f$ preserves area, a well-known consequence of Brouwer's Lemma (see \cite{brown1984new}) 
implies that $f$ has periodic points, a contradiction with our assumption on the
vertical rotation set.

Also, no connected component of $M$ can be essential, but not fully
essential. As $f$ is homotopic to a Dehn twist, if $M$ has an essential
connected component $M_{ess}$, which is not fully essential, 
then it had to be horizontal, that is, 
$M_{ess}$ contains essential curves homotopic to $(1,0).$ Finally, as some 
$f^{q}(M_{ess})=M_{ess},$ lifting it to $\mathbf{T}^{1}\times \mathbb{R},$ we
would obtain $\widehat{f}^{q}(\widehat{M}_{ess})=$ $\widehat{M}_{ess}+(0,p),$
for some integer $p,$ where $\widehat{M}_{ess}$ is any connected component
of $\tau ^{-1}({M}_{ess}).$ Our assumption that $M_{ess}$ is not fully essential
implies that $\widehat{M}_{ess}$ is bounded. And this gives a rational number in the
vertical rotation set, something that contradicts our hypotheses.

So, $M$ needs to be fully essential. Now, consider the components of the
complement of the closure of $M.$ If there is some, it needs to be a $f$-periodic inessential
open set. But we already ruled out the existence of such sets. Therefore, $M$ is
dense in the torus. Finally, building $M$ as the union of all iterates of
any given open ball, as $M$ needs to be dense, we obtain transitivity for $f.$
\end{proof}

Therefore, the equality in expression (\ref{toroetudo}) is true.
Thus, if we show that $\bigcup_{t\in \mathbf{T}^{1}}\mathrm{Graph}(\phi
_{t})$ is closed, then as it is dense, it must be the whole torus and the
existence of the foliation is complete.

Let $z_{n}=(x_{n},\phi _{t_{n}}(x_{n}))\in E$ be a sequence converging to $%
z\in \mathbf{T}^{2}$. Passing to a subsequence, we may assume that $t_{n}\to
t\in \mathbf{T}^{1}$ and $x_{n}\to x\in \mathbf{T}^{1}$. From what we did in
Part 3, $\phi _{t_{n}}\to \phi _{t}$ uniformly. Therefore, $(x_{n},\phi
_{t_{n}}(x_{n}))\to (x,\phi _{t}(x))$, and hence $z=(x,\phi _{t}(x))\in 
\mathrm{Graph}(\phi _{t})\subset E$. Thus, $E$ is closed. Since the family $%
\{\mathrm{Graph}(\phi _{t})\}_{t\in \mathbf{S}^{1}}$ consists of pairwise
disjoint sets, it follows that for every point $z\in \mathbf{T}^{2}$ there
exists a unique parameter $t\in \mathbf{T}^{1}$ such that $z\in \mathrm{Graph%
}(\phi _{t})$.
\end{proof}

\subsection{Proof of Corollary~\ref{cor:B}}

The foliation built in the previous theorem 
$$
t\rightarrow \mathrm{Graph}(\phi _{t}), \text{ for } t\in \mathbf{T}^{1},
$$ 
is from now on, denoted $\mathcal{F}%
_{G},$ following definition (7) of subsection 1.1. And, for 
$x\in \mathbf{T}^{1},$ let $V_{x}:=\{x\}\times \mathbf{T}^{1}.$


\begin{claim}\label{claPCM01}
For any $x\in \mathbf{T}^{1},$
given $L\in \mathcal{F}_{G}$ and $n\in \mathbb{Z},$ 
the intersection $f^{n}(V_{x})\cap L$ consists of exactly one point.

\end{claim}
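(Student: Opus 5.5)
The plan is to reduce the claim to the case $n=0$ by invariance of the foliation, and then to handle the general case with a covering-space/intersection-number argument in the cylinder.

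\emph{The case $n=0$.} Every leaf $L\in\mathcal{F}_G$ is $\mathrm{Graph}(\phi_t)$ for some $t$, and a graph over $\mathbf{T}^1$ meets each vertical $V_x$ in exactly one point, namely $(x,\phi_t(x))$.

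\emph{Reduction for $n\neq 0$.} Since $\mathcal{F}_G$ is $f$-invariant, $f^{-n}(L)$ is again a leaf $L'=\mathrm{Graph}(\phi_{t'})$, with $t'=t-n\alpha$. Because $f^n$ is a bijection,
$$f^n(V_x)\cap L=f^n\bigl(V_x\cap f^{-n}(L)\bigr)=f^n(V_x\cap L'),$$
and $V_x\cap L'$ is a single point by the $n=0$ case. This already gives the claim for all $n\in\mathbb{Z}$ by pure set theory, with no twist condition needed.

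\emph{Lift-level version for Corollary~\ref{cor:B}.} What the corollary really needs is the statement in the cover. For a vertical line $\{\widetilde{x}\}\times\mathbb{R}$ and a lifted leaf $\widetilde{L}=\mathrm{Graph}(\widetilde{\phi}_r)$, the intersection $\widetilde{f}^n(\{\widetilde{x}\}\times\mathbb{R})\cap\widetilde{L}$ should be exactly one point. I would prove this the same way: $\widetilde{f}^{-n}(\widetilde{L})=\mathrm{Graph}(\widetilde{\phi}_{r-n\alpha})$ up to a deck translation, using \eqref{eqSC05}, \eqref{eqSC06}, and the fact that the lifted foliation $\widetilde{\mathcal{F}}_G$ is $\widetilde{f}$-invariant. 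A vertical line in $\mathbb{R}^2$ meets the entire graph of a function $\mathbb{R}\to\mathbb{R}$ exactly once.

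\emph{Main point of care.} One must track which lift of the leaf is involved so that the torus statement and the plane statement match. In the torus, $f^n(V_x)$ is a closed curve in the homotopy class of the image of $(0,1)$ under the $n$-th power of the Dehn twist. A naive count would therefore give $|n\ell|$ intersections with a horizontal leaf, not one. Any worry this raises is dissolved by the bijection argument above, which is rigorous as stated. Still, I would double-check that $f^{-n}(L)$ is indeed a whole leaf and not a union of leaves; this holds because $f(L_z)=L_{f(z)}$ by the definition of invariance in item (7).
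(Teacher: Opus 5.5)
Your proof is correct and is essentially the paper's argument: $f^{-n}(L)$ is again a leaf, hence a graph meeting $V_x$ exactly once, and $f^n$ is a bijection. One correction to your side remark: the class $(n\ell,1)$ has algebraic intersection number $\pm 1$ with the horizontal class $(1,0)$, so a homological count of $|n\ell|$ never arises against a leaf (if it did, no set-theoretic bijection argument could remove it). The count $|n\ell|$ arises against a vertical $V_{x'}$ in the torus, which is why Corollary~\ref{cor:B} is formulated in the lift.
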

\begin{proof}[Proof of the claim]
As $L\in \mathcal{F}_{G},$ the subset $f^{-n}(L)$ is also a leaf of $\mathcal{F}_{G}.$ 
So it coincides with the graph of a function $\phi _{t}$ for some 
$t\in \mathbf{T}^{1}.$ Therefore, $f^{-n}(L)\cap V_{x}$ is a single
point, and the same holds for $L\cap f^{n}(V_{x})$.
\end{proof}

Let $\widetilde{\mathcal{F}}_{G}$ denote the lift of the foliation $\mathcal{F}%
_{G}$ to $\mathbb{R}^{2}$, and fix some $\widetilde{f}\in \mathrm{Diff}_{\mathrm{tw}%
}^{1}(\mathbb{R}^{2}),$ a lift of $f\in \mathrm{Diff}_{\mathrm{tw}}^{1}(%
\mathbf{T}^{2})$ used to compute the vertical rotation interval. Denote the vertical
at some $\widetilde{x}\in \mathbb{R}$ as $V_{\widetilde{x}}:=\{\widetilde{x}\}\times \mathbb{R}.$ 
The following claim provides a geometric proof for Corollary~\ref{cor:B}.

\begin{claim}
Fix $\widetilde{x}_{0}\in \mathbb{R}$. Then for every $\widetilde{x}\in %
\mathbb{R}$ and every $n\in \mathbb{N}$, the intersection $\widetilde{f}%
^{n}(V_{\widetilde{x}_{0}})\cap V_{\widetilde{x}}$ consists of a single
point.
\end{claim}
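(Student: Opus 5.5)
The approach is to work with the lifted foliation $\widetilde{\mathcal{F}}_G$ on $\mathbb{R}^2$. Its leaves are graphs $\mathrm{Graph}(\widetilde{\phi}_r)$, $r\in\mathbb{R}$, of $K$-Lipschitz functions $\widetilde{\phi}_r:\mathbb{R}\to\mathbb{R}$ that are strictly increasing in $r$, with $\widetilde{\phi}_r\to\pm\infty$ as $r\to\pm\infty$, and $\widetilde{f}$ maps $\mathrm{Graph}(\widetilde{\phi}_r)$ onto $\mathrm{Graph}(\widetilde{\phi}_{r+\alpha})$. This gives coordinates $(\widetilde{x},r)$ on the plane. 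The claim says that the curve $\widetilde{f}^n(V_{\widetilde{x}_0})$ is a graph over the horizontal axis, i.e.\ $p_1$ restricted to it is a homeomorphism onto $\mathbb{R}$. I will prove this in three steps.

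\emph{Step 1: parametrize by leaves.} By Claim \ref{claPCM01} lifted to $\mathbb{R}^2$, $V_{\widetilde{x}_0}$ meets each lifted leaf in exactly one point. Hence $\widetilde{f}^n(V_{\widetilde{x}_0})$ meets each lifted leaf $\widetilde{L}_r$ in exactly one point, which I call $\gamma_n(r)$. The map $r\mapsto\gamma_n(r)$ parametrizes the curve $\widetilde{f}^n(V_{\widetilde{x}_0})$. Set $g_n(r):=p_1(\gamma_n(r))$. The claim is equivalent to $g_n:\mathbb{R}\to\mathbb{R}$ being a bijection, and $g_n$ is continuous.

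\emph{Step 2: strict monotonicity via the twist.} I will show by induction on $n$ that $g_n$ is strictly increasing (for a right twist). The base case is $n=1$, which is exactly the twist condition \eqref{eqBD03}: $\widetilde{f}(V_{\widetilde{x}_0})$ is a graph over $\widetilde{x}$, oriented upward.

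For the induction step, take $r<r'$. The points $\gamma_n(r)$ and $\gamma_n(r')$ lie on $\widetilde{L}_r$ and $\widetilde{L}_{r'}$ respectively. Suppose $g_n(r)\ge g_n(r')$. The vertical through $\gamma_n(r')$ meets $\widetilde{L}_r$ at a point $w$ lying below $\gamma_n(r')$. Because the induction hypothesis says $\widetilde{f}^{n-1}(V_{\widetilde{x}_0})$ is a graph, I can compare the preimages: pulling back by $\widetilde{f}$, the points $\widetilde{f}^{-1}(\gamma_n(r))$ and $\widetilde{f}^{-1}(w)$ lie on the same leaf $\widetilde{L}_{r-\alpha}$. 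The twist condition applied to $\widetilde{f}^{-1}$ along the vertical segment from $w$ to $\gamma_n(r')$ gives an order relation on the horizontal positions of their preimages. Leafwise order is preserved because $\widetilde{f}$ maps each leaf homeomorphically, and since it is isotopic to a Dehn twist it preserves the left/right order along each leaf. Combining these two orderings should place $\widetilde{f}^{-1}(\gamma_n(r'))$ horizontally to the left of $\widetilde{f}^{-1}(\gamma_n(r))$, contradicting the monotonicity of $g_{n-1}$.

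I expect the bookkeeping of orientations in this step to be the main obstacle. If it fails, my fallback is a different argument. A locally non-monotone continuous $g_n$ gives a vertical line $V$ meeting the curve twice. The arc of $\widetilde{f}^n(V_{\widetilde{x}_0})$ between those two points, together with the segment of $V$ joining them, bounds a disk. I would then iterate backwards and apply the graph property of all the $\widetilde{f}^{-k}(V)$, which follows from the twist of $\widetilde{f}^{-1}$ and the uniform Lipschitz bound of Part 2, to obtain a contradiction via the uniqueness in Claim \ref{claPCM01}.

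\emph{Step 3: surjectivity.} $\gamma_n(r)\in\widetilde{L}_r$ and $\widetilde{f}^n$ has bounded horizontal displacement modulo the Dehn twist term $\ell\widetilde{y}$. For $r\to+\infty$, the preimage point on $V_{\widetilde{x}_0}$ has height tending to $+\infty$. Iterating the twist form $\widetilde{x}+\ell\widetilde{y}+\varphi_1$ $n$ times then gives $g_n(r)\to+\infty$, and symmetrically $g_n(r)\to-\infty$ as $r\to-\infty$. A continuous, strictly monotone, unbounded-both-ways $g_n$ is a bijection, which yields exactly one intersection point with each $V_{\widetilde{x}}$. The case of negative $n$ in Corollary \ref{cor:B} follows by the same argument applied to $\widetilde{f}^{-1}$, a left twist map.
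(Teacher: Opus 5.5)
Your argument is correct, and it takes a different route from the paper.

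\textbf{The paper's route.} It argues by contradiction using Le Calvez's structure theory for images of verticals under iterates of twist maps. If $\widetilde{f}^{n}(V_{\widetilde{x}_0})$ met $V_{\widetilde{x}}$ twice, the fact that it is a negative curve yields parameters $t_F\le t_L$ such that $\widetilde{f}^{n}(\widetilde{x}_0,t_F)$ is the highest intersection point and $\widetilde{f}^{n}(\widetilde{x}_0,t_L)$ the lowest. A leaf $L$ passing between these two points is then crossed twice: once by the arc over $[t_F,t_L]$, and once by the ray over $]-\infty,t_F]$, whose height tends to $-\infty$. This contradicts Claim \ref{claPCM01}.

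\textbf{Your route.} Your induction avoids that imported structure entirely. It uses only the one-step twist of $\widetilde{f}^{-1}$ and the ordering of the invariant foliation. The orientation bookkeeping you were unsure about does close.
\begin{itemize}
\item Since $w$ lies strictly below $\gamma_n(r')$ on the same vertical, the left twist of $\widetilde{f}^{-1}$ gives $p_1\circ\widetilde{f}^{-1}(\gamma_n(r'))<p_1\circ\widetilde{f}^{-1}(w)$.
\item Since $p_1(w)\le p_1(\gamma_n(r))$ on $\widetilde{L}_r$, order preservation from $\widetilde{L}_r$ to $\widetilde{L}_{r-\alpha}$ gives $p_1\circ\widetilde{f}^{-1}(w)\le p_1\circ\widetilde{f}^{-1}(\gamma_n(r))$.
\item Together these give $g_{n-1}(r'-\alpha)<g_{n-1}(r-\alpha)$, the desired contradiction.
\end{itemize}

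\textbf{Justifying order preservation.} ``Isotopic to a Dehn twist'' is vaguer than needed. The clean reason is this: the induced homeomorphism of $\mathbb{R}$ between the horizontal coordinates of the two leaves commutes with $\widetilde{x}\mapsto\widetilde{x}+1$. That holds because $\widetilde{f}$ commutes with the deck translation $(\widetilde{x},\widetilde{y})\mapsto(\widetilde{x}+1,\widetilde{y})$ and each $\widetilde{\phi}_r$ is $1$-periodic. A homeomorphism of $\mathbb{R}$ commuting with $+1$ cannot be decreasing.

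\textbf{The fallback is unnecessary.} The step only needs $g_{n-1}$ to be nondecreasing, so the induction may even start from $g_0\equiv\widetilde{x}_0$. As sketched, the fallback would also be circular: the graph property of $\widetilde{f}^{-k}(V)$ for $k\ge 2$ is just the backward-iterate version of the claim itself.

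\textbf{What each approach buys.} Your argument is self-contained. It proves monotonicity of $p_1$ along the curve directly, and Step 3 handles existence of the intersection point explicitly, which the paper leaves implicit. The paper's argument is shorter, but only because it uses Le Calvez's results on negative curves as a black box.
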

\begin{proof}[Proof of the claim]
For some $n\in \mathbb{N}$ and $\widetilde{x}\in \mathbb{R}$, if the
intersection $\widetilde{f}^{n}(V_{\widetilde{x}_{0}})\cap V_{\widetilde{x}}$
contains more than one point, then as the curve $\mathbb{R}\ni t\rightarrow \widetilde{f}%
^{n}(\widetilde{x}_{0},t)$ is a negative curve,
there exists a closed interval $[t_{F},t_{L}]\subset \mathbb{R}$ 
such that for $t<t_{F},$ $\widetilde{f}^{n}(\widetilde{x}_{0},t)$ 
is to the left of $V_{\widetilde{x}},$ 
and for $t>t_{L},$ $\widetilde{f}^{n}(\widetilde{x}_{0},t)$ is to the right of 
$V_{\widetilde{x}}.$ 
Moreover, $\widetilde{f}^{n}(\widetilde{x}%
_{0},t_{F})$ is the highest point in $\widetilde{f}^{n}(V_{\widetilde{x}%
_{0}})\cap V_{\widetilde{x}}$ and $\widetilde{f}^{n}(\widetilde{x}_{0},t_{L})
$ is the lowest (see Le Calvez \cite{le1991proprietes}). This means that $\widetilde{f}^{n}(%
\widetilde{x}_{0}\times [t_{F},t_{L}])$ is a simple arc connecting 2 points
in $V_{\widetilde{x}}.$ Therefore, there exists a leaf $L$ of $\mathcal{F}%
_{G}$ which intersects this arc, because it can be chosen so that $%
\widetilde{f}^{n}(\widetilde{x}_{0},t_{F})$ is above it and $\widetilde{f}%
^{n}(\widetilde{x}_{0},t_{L})$ is below. But now, we consider $\widetilde{f}%
^{n}(\widetilde{x}_{0}\times ]-\infty ,t_{F}]).$ As $\widetilde{f}^{n}(%
\widetilde{x}_{0},t_{F})$ is above $L$ and $p_{2}\circ \widetilde{f}^{n}(%
\widetilde{x}_{0},t)\rightarrow -\infty $ as $t\rightarrow -\infty ,$ we get
that for some $t^{\prime }<t_{F},$ $\widetilde{f}^{n}(\widetilde{x}%
_{0},t^{\prime })$ also belongs to $L.$ This contradicts the previous
claim and completes the proof.
\end{proof}

The previous claim implies that for all integers $n\neq0$, $f^n$ satisfies the topological
twist condition.

\subsection{Proof of Theorem~\ref{thm:C}}

\emph{Sketch of the proof.} First, we prove that there exists a
homeomorphism $\Phi :\mathbf{T}^{2}\to \mathbf{T}^{2}$, homotopic to the
identity, such that $\Phi (\mathcal{F}_{G})=\mathcal{F}_{C},$ the
horizontal foliation, and defining $\Phi \circ f\circ \Phi ^{-1}:=f^{*},$ it
writes as 
\begin{equation}
f^{*}(x,y)=(x+\ell y+\varphi (x,y)\text{ mod 1},y+\alpha \text{ mod 1}),
\label{deff*}
\end{equation}
where $\alpha $ is $f^{\prime }s$ single vertical rotation number and $\ell
\in \mathbb{Z}\setminus \{0\}$ is the same as for $f.$ Moreover, $f^{*}$
satisfies the topological twist condition 
and preserves the probability measure $\mu :=\Phi _{*}\mathrm{Leb}_{\mathbf{T%
}^{2}}.$ 
Next, under hypotheses of Theorem~\ref{thm:C} and
by the classical theory of distality and proximality, we conclude that $f$
is minimal.

\subsubsection{Homeomorphic foliations}



Initially, let us reparameterize the leaves of $\mathcal{F}_{G}$ using the
vertical $V_{0}:=\{0\}\times \mathbf{T}^{1}.$ Each leaf of $\mathcal{F}_{G}$
is of the form, $\mathrm{Graph}(\phi _{r})$ for $r\in \mathbf{T}^{1}.$ Let $%
t=t(r)\in \mathbf{T}^{1}$ be given by $t(r):=\{t\in \mathbf{T}^{1}:t=\phi
_{r}(0)\}.$ In the parameter $t,$ the foliation $\mathcal{F}_{G}$ is given
by the graphs of the family $\phi _{t}^{*}:=\phi _{r(t)},$ where $r(t)$ is
the inverse of $t(r).$ From now on, we consider the family $\phi _{t}^{*},$
which by construction, satisfies $\phi _{t}^{*}(0)=t.$ With a slight abuse
of notation, we drop the * in $\phi _{t}^{*}$. 

Next, we consider the \emph{projection along the leaves} $P:\mathbf{T}%
^{2}\to V_{0}$ as 
\begin{equation}
P(z):=\{\text{the intersection of the leaf containing }z\text{ with }%
V_{0}\}=(0,t),  \label{equaEIFD08}
\end{equation}
where $\mathrm{Graph}(\phi _{t})$ contains $z.$

In other words, the projection along the leaves is obtained by sliding the
point $z$ along its leaf until it meets the transversal $V_{0}$. From our
reparametrization, it coincides with the parameter of the leaf containing $z.
$ Since $\mathcal{F}_{G}$ is a uniformly continuous foliation by graphs,
the map $P$ is well defined and continuous. Moreover, by invariance of $%
\mathcal{F}_{G}$ under $f,$ and since $V_{0}$ is transversal to all the
leaves, $f(\mathrm{Graph}(\phi _{t}))=\mathrm{Graph}(\phi _{s}),$ for some $%
s\in \mathbf{T}^{1}.$ This defines a map $\psi :\mathbf{T}^{1}\to \mathbf{T}%
^{1}$ by 
\begin{equation}
\psi (t):=s,\quad \text{when}\ f(0,t)\in \mathrm{Graph}(\phi _{s}).
\label{equaEIFD09}
\end{equation}
The map $\psi $ captures the dynamics induced by $f$ on the space of
leaves of the foliation $\mathcal{F}_{G}.$ Identifying $V_{0}$ with $\mathbf{%
T}^{1}$, we get that $\psi :=P\circ f|_{V_{0}}$, therefore it is continuous. Since $%
\mathcal{F}_{G}$ is also $f^{-1}$-invariant, the same argument shows that $%
\psi ^{-1}$ exists and it is also continuous. In particular, $\psi $
preserves orientation, and so it is a homeomorphism of the circle to which a
rotation number can be assigned. In particular, the proposition below shows
that it appears after a natural coordinate change that sends $\mathcal{F}_{G}
$ into the horizontal foliation. So, for a suitable lift of $\psi ,$ its
rotation number is $\alpha $ and $\psi $ is conjugated to $R_{\alpha }.$

The main goal of this subsection is to prove the following result.

\begin{proposition}
\label{propCFT01} Let $\widetilde{f}\in \mathrm{Diff}_{\mathrm{tw}}^{1}(%
\mathbb{R}^{2})$ be area-preserving and suppose that $\rho _{V}(\widetilde{f}%
)=\{\alpha \}$, for some $\alpha \notin \mathbb{Q}.$ Denoting as 
$f:\mathbf{T}^{2}\to \mathbf{T}^{2}$ the
map lifted by $\widetilde{f}$ $,$ there exists a homeomorphism $\Phi :%
\mathbf{T}^{2}\to \mathbf{T}^{2}$, homotopic to the identity, such that:

\begin{enumerate}
\item[$(1)$]  $\Phi (\mathcal{F}_{G})=\mathcal{F}_{C}$;

\item[$(2)$]  $\Phi \circ f=f^{*}\circ \Phi $, where $f^{*}:\mathbf{T}%
^{2}\to \mathbf{T}^{2}$ writes as in 
(\ref{deff*}), and satisfies the 
topological twist condition.
\end{enumerate}
\end{proposition}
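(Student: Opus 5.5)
The plan is to build $\Phi$ explicitly from the foliation and the semi-conjugacy. By Theorem~\ref{theoBICT01}, the leaves of $\mathcal{F}_G$ are the circloids $C_t=h^{-1}(t)$, where $h$ is the semi-conjugacy from subsection 2.1. Indeed, $\widehat{h}^{-1}(r)$ contains $\widehat{C}_r$, and the graphs fill $\mathbf{T}^1\times\mathbb{R}$ and are pairwise disjoint. So $\widehat{h}^{-1}(r)=\mathrm{Graph}(\widehat{\phi}_r)$ exactly. I would therefore use the original parametrization $r$, rather than the normalization $\phi_t(0)=t$, and define
\[
\widehat{\Phi}(\widehat{x},\widehat{y}):=(\widehat{x},\widehat{h}(\widehat{x},\widehat{y})).
\]
By \eqref{eqSC10}, $\widehat{h}\circ T=\widehat{h}+1$, so $\widehat{\Phi}$ commutes with $T$ and descends to a map $\Phi$ of $\mathbf{T}^2$. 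For fixed $\widehat{x}$, the map $\widehat{y}\mapsto\widehat{h}(\widehat{x},\widehat{y})$ is strictly increasing, since the vertical crosses each graph exactly once and the graphs are ordered by \eqref{eqSC08}. It is continuous and surjective onto $\mathbb{R}$. Hence $\widehat{\Phi}$ is a continuous bijection that is proper, and therefore a homeomorphism. The difference $\widehat{h}(\widehat{x},\widehat{y})-\widehat{y}$ is bounded and $T$-periodic, so $\Phi$ is homotopic to the identity (straight-line homotopy in the lift). By construction $\Phi$ sends $\mathrm{Graph}(\phi_t)$ to $\mathbf{T}^1\times\{t\}$, which proves item (1).

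For item (2), set $f^*=\Phi\circ f\circ\Phi^{-1}$ and write its lift as $\widehat{f}^*=\widehat{\Phi}\circ\widehat{f}\circ\widehat{\Phi}^{-1}$. For the second coordinate, \eqref{eqSC10} gives $\widehat{h}\circ\widehat{f}=\widehat{h}+\alpha$, so $p_2\circ\widehat{f}^*(\widehat{x},\widehat{y})=\widehat{y}+\alpha$. The first coordinate is $p_1\circ\widehat{f}\circ\widehat{\Phi}^{-1}$. Because $\Phi$ is homotopic to the identity, $f^*$ lies in the same homotopy class as $f$, namely the Dehn twist with the same $\ell$. Lifting to $\mathbb{R}^2$, the horizontal coordinate therefore has the form $\widetilde{x}+\ell\widetilde{y}+\varphi(\widetilde{x},\widetilde{y})$ with $\varphi$ $\mathbb{Z}^2$-periodic. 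This is formula \eqref{deff*}.

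The expected obstacle is the topological twist condition for $f^*$. The image under $\widetilde{\Phi}^{-1}$ of a vertical $\{\widetilde{x}\}\times\mathbb{R}$ is the same vertical, since $\widehat{\Phi}$ preserves the first coordinate. Hence $\widetilde{f}^*(V_{\widetilde{x}})=\widetilde{\Phi}(\widetilde{f}(V_{\widetilde{x}}))$. By the twist condition, $\widetilde{f}(V_{\widetilde{x}})$ projects injectively onto the horizontal coordinate, and $\widetilde{\Phi}$ preserves horizontal coordinates. So the projection of $\widetilde{f}^*(V_{\widetilde{x}})$ to the first coordinate is the same as that of $\widetilde{f}(V_{\widetilde{x}})$, and injectivity transfers directly. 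Corollary~\ref{cor:B} yields the same conclusion for all iterates. The only genuinely delicate point is the equality $\widehat{h}^{-1}(r)=\mathrm{Graph}(\widehat{\phi}_r)$, which gives the strict monotonicity of $\widehat{h}$ on verticals. I would deduce it from Part 4 of Lemma~\ref{lemTC07}: the graphs cover everything, and each point lies on exactly one graph, which sits in the matching level set.
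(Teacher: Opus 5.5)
Your argument is correct, but it reaches the key point by a different route than the paper.

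The paper first straightens the foliation with $\Phi_1(x,y)=(x,t)$, using the normalized parameter $\phi_t(0)=t$. It then has to show separately that the induced circle map $g_2=\psi$ is conjugate to $R_\alpha$. This map has rotation number $\alpha$, and it has no wandering intervals because $g$ is non-wandering by area preservation. Poincar\'{e}'s classification then supplies $h_1$, and $\Phi=\Phi_2\circ\Phi_1$.

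You instead observe that the leaves are exactly the fibers of the semi-conjugacy, $\widehat h^{-1}(r)=\mathrm{Graph}(\widehat\phi_r)$. This is correctly deduced from Part 4 of Lemma \ref{lemTC07} together with \eqref{eqSC08} and \eqref{eqSC11}. You then use $\widehat h$ itself as the vertical coordinate, so \eqref{eqSC10} gives the second coordinate $y+\alpha$ immediately, with no Denjoy-type step and no further use of the invariant measure.

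In fact your $\Phi$ is the paper's $\Phi_2\circ\Phi_1$ with the specific choice $h_1=h|_{V_0}$. This choice conjugates $\psi$ to $R_\alpha$ because $h$ is constant on leaves. Any other conjugacy differs from it by a rotation, so the bi-Lipschitz hypotheses used later are unaffected.

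What each route buys:
\begin{itemize}
\item Your version gives an explicit formula and identifies the leaf conjugacy concretely as the restriction of $h$ to a transversal.
\item The paper's two-step version applies to any invariant foliation by graphs of an area-preserving map, not only one whose leaves are fibers of a semi-conjugacy. It also separates the regularity of the foliation from that of the leaf dynamics, which is exactly how Lemma \ref{lemPTC02} is organized.
\end{itemize}

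The transfer of the topological twist condition is the same in both proofs, since the conjugating map preserves verticals and first coordinates.
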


\begin{proof}

Let us start by defining the map $\Phi _{1}:\mathbf{T}^{2}\to \mathbf{T}^{2}$
by 
\begin{equation}
\Phi _{1}(x,y):=(x,t),  \label{eqCFT02}
\end{equation}
where $t\in \mathbf{T}^{1}$ is the unique parameter such that $(x,y)\in 
\mathrm{Graph}(\phi _{t})\in \mathcal{F}_{G}$. The parameter $t$ is well
defined and unique since $\mathcal{F}_{G}$ is a foliation by graphs over the
first coordinate. Part 3 of the proof of Lemma \ref{lemTC07} implies that $%
\Phi _{1}$ is continuous and that the inverse of $\Phi _{1},$ given by $\Phi
_{1}^{-1}(x,s):=(x,\phi _{s}(x))$ is also continuous. Hence, $\Phi _{1}$ is
a homeomorphism of the torus, clearly in the homotopic to the identity
class. As $\Phi _{1}(\mathcal{F}_{G})=\mathcal{F}_{C},$ the map $g:=\Phi
_{1}\circ f\circ \Phi _{1}^{-1}$ is homotopic to a Dehn twist and it
preserves the foliation $\mathcal{F}_{C}$.

Let $\widetilde{g}:\mathbb{R}^{2}\to \mathbb{R}^{2}$ be a lift of $g$ to the
plane. Fix $\widetilde{x}_{0}\in \mathbb{R}.$ 
By definition, $\widetilde{\Phi }_{1}(V_{\widetilde{x}_{0}})=V_{\widetilde{x}_{0}}$, 
and thus $\widetilde{\Phi }_{1}^{-1}(V_{\widetilde{x}_{0}})=V_{\widetilde{x}_{0}}$.
Since $\widetilde{f}\in \mathrm{Diff}_{\mathrm{tw}}^{1}(\mathbb{R}^{2})$,
the curve $\widetilde{f}(V_{\widetilde{x}_{0}})$ has a single intersection
point with the vertical $V_{\widetilde{x}},$ for all $\widetilde{x}\in %
\mathbb{R}$. Hence, $\widetilde{\Phi }_{1}(\widetilde{f}(V_{\widetilde{x}%
_{0}}))$ also has a single intersection point with each vertical $V_{%
\widetilde{x}},$ for all $\widetilde{x}\in \mathbb{R}$. As $\widetilde{g}=%
\widetilde{\Phi }_{1}\circ \widetilde{f}\circ \widetilde{\Phi }_{1}^{-1}$,
we conclude that $\widetilde{g}$ satisfies the topological twist condition.


We may write the map $g$ above in coordinates as 
\[
g(x,y)=(g_{1}(x,y),g_{2}(x,y)),\quad (x,y)\in \mathbf{T}^{2}.
\]
Since $g$ preserves the foliation $\mathcal{F}_{C},$ the second coordinate
is constant along each horizontal leaf. Hence, $g_{2}(x,y)=g_{2}(y)$, and we
can consider the circle homeomorphism $g_{2}:\mathbf{T}^{1}\to \mathbf{T}%
^{1}.$ From the fact that $g$ is homotopic to a Dehn twist, $g_{2}$ is
homotopic to the identity. As $\rho _{V}(\widetilde{f})=\{\alpha \}$ for
some chosen lift $\widetilde{f}\in \mathrm{Diff}_{\mathrm{tw}}^{1}(%
\mathbb{R}^{2}),$ the corresponding lift for $g,$ given by 
\[
\widetilde{g}=\widetilde{\Phi }_{1}\circ \widetilde{f}\circ \widetilde{\Phi }%
_{1}^{-1}=(\widetilde{g}_{1}(\widetilde{x},\widetilde{y}),\widetilde{g}_{2}(%
\widetilde{y})),
\]
where $\widetilde{\Phi }_{1}$ is any lift of $\Phi _{1},$ also satisfies $%
\rho _{V}(\widetilde{g})=\{\alpha \}.$ So, the rotation number of $(g_{2},%
\widetilde{g}_{2})$ is $\alpha .$ And moreover, $g_{2}$ is topologically
conjugate to the rigid rotation $R_{\alpha }$.

Otherwise, $g_{2}$ would be a \emph{Denjoy counterexample,} thus
admitting wandering intervals. Let $I=]s,t[\subset \mathbf{S}^{1}$ be a
wandering interval for $g_{2}$, and define the open set $U:=\mathbf{T}%
^{1}\times I\subset \mathbf{T}^{2}.$ It is easy to see that $U$ is a
wandering set for $g$. Since $f$ is area-preserving, $g$ is non-wandering.
So $g_{2}$ does not admit wandering intervals, and consequently, it must be
topologically conjugated to the irrational rotation $R_{\alpha }$.

Now, let us show that $g_{2}=\psi ,$ defined in expression (\ref
{equaEIFD09}).
Pick some $\phi _{t}$ in $\mathcal{F}_{G}.$ 

The map $\Phi _{1}$
maps $\mathrm{Graph}(\phi _{t})$ into $\mathbf{T}^{1}\times \{t\}.$ 
And $f$($%
\mathrm{Graph}(\phi _{t}))=\mathrm{Graph}(\phi _{\psi (t)}).$ So, 

{\small
\[
\mathbf{T}^{1}\times \{g_{2}(t)\}=g(\mathbf{T}^{1}\times \{t\})=g\circ \Phi
_{1}(\mathrm{Graph}(\phi _{t}))=\Phi _{1}\circ f(\mathrm{Graph}(\phi
_{t}))=\Phi _{1}(\mathrm{Graph}(\phi _{\psi (t)}))=\mathbf{T}^{1}\times
\{\psi (t)\},
\]
}
which implies that $g_{2}(t)=$ $\psi (t)$ for all $t\in \mathbf{T}^{1}.$

Continuing, let $h_{1}:\mathbf{T}^{1}\to \mathbf{T}^{1}$ be the topological
conjugacy between $g_{2}$ and the irrational rotation $R_{\alpha }$. And
define the map $\Phi _{2}:\mathbf{T}^{2}\to \mathbf{T}^{2}$ as 
\begin{equation}
\Phi _{2}(x,y):=(x,h_{1}(y)),\quad (x,y)\in \mathbf{T}^{2}.  \label{eqCFT03}
\end{equation}
Notice that $\Phi _{2}$ is a homeomorphism homotopic to the identity that
preserves $\mathcal{F}_{C}$. Moreover, $f^{*}:=\Phi _{2}\circ g\circ \Phi
_{2}^{-1}:\mathbf{T}^{2}\to \mathbf{T}^{2}$ is of the form (\ref{deff*}). So, 
defining the map $\Phi :\mathbf{T}^{2}\to \mathbf{T}^{2}$ as $\Phi :=\Phi
_{2}\circ \Phi _{1}$, it is homotopic to the identity and satisfies $\Phi (\mathcal{F%
}_{G})=\mathcal{F}_{C}$. Moreover, $\Phi \circ
f(x,y)=f^{*}\circ \Phi (x,y)$. To conclude, note that defining the probability measure 
$\mu (A):=\mathrm{Leb}_{\mathbf{T}^{2}}\circ \Phi ^{-1}(A)$ for any $Borel$
set $A\subset \mathbf{T}^{2},$ it has full support and is positive on open sets. 
And by construction, $f^{*}$ preserves $\mu.$
\end{proof}

In the next lemma, which is actually not used  in the proof of Theorem \ref{thm:C}, we show that 
$\psi$, the map induced by $f$ on the space of leaves of $\mathcal{F}_{G},$ is actually 
bi-Lipschitz when the foliation $\mathcal{F}_{G}$ is Lipschitz. This makes the additional regularity 
assumption in Theorem \ref{thm:C} more natural: namely, the assumption that the conjugacy between 
$\psi$ and $R_{\alpha}$ is bi-Lipschitz. 

\begin{lemma}
Under the hypotheses of Proposition~\ref{propCFT01}, suppose in addition
that the foliation $\mathcal{F}_{G}:=\{\mathrm{Graph}(\phi _{t})\}_{t\in 
\mathbf{T}^{1}}$ is Lipschitz, see definition (8) of  subsection 1.1, that
is, there exist a constant $K_{Fol}>0$ such that for $t_{1},t_{2}\in \mathbf{%
T}^{1}$, and any $x,x^{\prime }\in \mathbf{T}^{1},$ we have 
\[
\frac{1}{K_{Fol}}.d_{\mathbf{T}^{1}}(\phi _{t_{1}}(x^{\prime }),\phi
_{t_{2}}(x^{\prime }))\leq d_{\mathbf{T}^{1}}(\phi _{t_{1}}(x),\phi
_{t_{2}}(x))\leq K_{Fol}.d_{\mathbf{T}^{1}}(\phi _{t_{1}}(x^{\prime }),\phi
_{t_{2}}(x^{\prime })).
\]
Then the induced map $\psi :\mathbf{T}^{1}\to \mathbf{T}^{1}$ is 
bi-Lipschitz.
\end{lemma}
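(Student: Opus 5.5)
We show that the induced leaf map $\psi$ distorts distances by at most a bounded factor, using the Lipschitz property of $f$ together with the Lipschitz property of the foliation. Recall that, after the reparametrization by the vertical $V_0$, we have $\phi_t(0)=t$ and $\psi(t)=s$ exactly when $f(0,t)\in\mathrm{Graph}(\phi_s)$. Hence $d_{\mathbf{T}^1}(\psi(t_1),\psi(t_2))=d_{\mathbf{T}^1}(\phi_{\psi(t_1)}(0),\phi_{\psi(t_2)}(0))$. The idea is to move from the vertical $V_0$ to the points $f(0,t_i)$ along the leaves, and then compare with $(0,t_i)$ via $f$.

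\textbf{Upper bound.} Fix $t_1,t_2$ close, and let $f(0,t_1)=(x_1,y_1)$ and $f(0,t_2)=(x_2,y_2)$. Since $f$ is a $C^1$ diffeomorphism of a compact manifold, it is Lipschitz with some constant $\mathrm{Lip}(f)$. Thus $|x_1-x_2|+|y_1-y_2|\le c\,\mathrm{Lip}(f)\,d(t_1,t_2)$.

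Here $y_1=\phi_{\psi(t_1)}(x_1)$ and $y_2=\phi_{\psi(t_2)}(x_2)$. We write
$$\phi_{\psi(t_1)}(x_1)-\phi_{\psi(t_2)}(x_1)=\bigl(y_1-y_2\bigr)+\bigl(\phi_{\psi(t_2)}(x_2)-\phi_{\psi(t_2)}(x_1)\bigr).$$
By Lemma~\ref{lemTC07} every leaf is $K$-Lipschitz, so the second term is bounded by $K|x_1-x_2|$. This gives $d(\phi_{\psi(t_1)}(x_1),\phi_{\psi(t_2)}(x_1))\le (1+K)\,c\,\mathrm{Lip}(f)\,d(t_1,t_2)$.

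The Lipschitz foliation hypothesis, applied with $x=0$ and $x'=x_1$, then gives
$$d(\psi(t_1),\psi(t_2))\le K_{Fol}(1+K)\,c\,\mathrm{Lip}(f)\,d(t_1,t_2).$$
One must work with lifts $\widehat{\phi}$ in $\mathbf{T}^1\times\mathbb{R}$ so that vertical differences are real numbers; for nearby leaves the torus distance agrees with the lifted difference. Local Lipschitz bounds then globalize on the circle by subdividing arcs.

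\textbf{Lower bound.} Apply the same argument to $f^{-1}$, which is also a $C^1$ diffeomorphism preserving $\mathcal{F}_G$ and inducing $\psi^{-1}$ on the leaf space. This shows that $\psi^{-1}$ is Lipschitz, so $\psi$ is bi-Lipschitz.

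The main technical care lies in the bookkeeping between circle distances and lifted differences. Specifically, one must ensure that when $t_1,t_2$ are close, the lifts of the leaves through $f(0,t_i)$ can be chosen with vertical separation small everywhere. This follows from the uniform continuity of $t\mapsto\phi_t$ (Part 3 of Lemma~\ref{lemTC07}) and the continuity of $\psi$. Once both points are in this small-scale regime, the definition (8) of a Lipschitz foliation applies directly to the chosen lifts.
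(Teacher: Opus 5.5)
Your proposal is correct and follows essentially the same route as the paper: it bounds the displacement between $f(0,t_1)$ and $f(0,t_2)$ by the Lipschitz constant of $f$, converts this into a vertical gap between the two image leaves at $x_1$ via the uniform $K$-Lipschitz bound, and then uses the Lipschitz foliation condition together with the normalization $\phi_s(0)=s$ to get $d(\psi(t_1),\psi(t_2))\le K_{Fol}M(1+K)\,d(t_1,t_2)$, arguing symmetrically with $f^{-1}$ for $\psi^{-1}$. Your extra remarks on working with lifts and globalizing local bounds on the circle are bookkeeping that the paper leaves implicit.
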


\begin{proof}

Let $t_{1},t_{2}\in \mathbf{T}^{1}.$ By the reparameterization in the
foliation we performed, and choosing $x^{\prime }=0$ in the above
inequality, we get that:

\begin{equation}
\frac{1}{K_{Fol}}.d_{\mathbf{T}^{1}}(t_{1},t_{2})\leq d_{\mathbf{T}%
^{1}}(\phi _{t_{1}}(x),\phi _{t_{2}}(x))\leq K_{Fol}.d_{\mathbf{T}%
^{1}}(t_{1},t_{2}).  \label{condjulian}
\end{equation}

Since $f$ is $C^{1}$and $\mathbf{T}^{2}$ is compact, there exists $M>0$ such
that 
\[
d_{\mathbf{T}^{2}}\bigl(f(0,t_{1}),f(0,t_{2})\bigr)\leq Md_{\mathbf{T}%
^{1}}(t_{1},t_{2}).
\]
Writing $f(0,t_{i})=(x_{i},\phi _{\psi (t_{i})}(x_{i}))$ for $i=1,2,$ and
using that each $\phi _{t}$ is $K$-Lipschitz, we obtain 
\[
d_{\mathbf{T}^{1}}(\phi _{\psi (t_{1})}(x_{1}),\phi _{\psi
(t_{2})}(x_{1}))\leq M(1+K)d_{\mathbf{T}^{1}}(t_{1},t_{2}).
\]
By the Lipschitz property of the foliation, 
\[
\frac{1}{K_{Fol}}.d_{\mathbf{T}^{1}}(\psi (t_{1}),\psi (t_{2}))\leq d_{%
\mathbf{T}^{1}}(\phi _{\psi (t_{1})}(x_{1}),\phi _{\psi (t_{2})}(x_{1})),
\]
and therefore, 
\[
d_{\mathbf{T}^{1}}(\psi (t_{1}),\psi (t_{2}))\leq K_{Fol}.M(1+K)d_{\mathbf{T}%
^{1}}(t_{1},t_{2}).
\]
Hence, $\psi $ is Lipschitz.

By an analogous argument, one also concludes that $\psi^{-1}$ is Lipschitz.
\end{proof}

\subsubsection{Minimality: a special case}

By Proposition~\ref{propCFT01}, we may assume that after a coordinate
change, $f:\mathbf{T}^{2}\to \mathbf{T}^{2}$ writes as: 
\[
f^{*}(x,y):=\left( x+\ell y+\varphi (x,y)\text{ mod 1},y+\alpha \text{ mod 1}%
\right) ,\quad (x,y)\in \mathbf{T}^{2},
\]
where $\alpha \notin \mathbb{Q}$, $\ell \in \mathbb{Z}\setminus \{0\}$, and $%
\varphi :\mathbf{T}^{2}\to \mathbb{R}$ is the function induced by a $%
\mathbb{Z}^{2}$-periodic map $\widetilde{\varphi }:\mathbb{R}^{2}\to %
\mathbb{R}$.

Since the leaves of the foliation $\mathcal{F}_C$ are horizontal circles, let us 
fix from now on, an orientation of $\mathbf{T}^1$. This choice allows us
to define open segments along the leaves unambiguously.

By Proposition~\ref{transitivao}, $f$ is topologically transitive. Thus,
to prove that $f$ is minimal, it suffices, by Lemma~\ref{lemDPH01}, to show
that $f^*$ is distal. Since $f^*$ preserves the foliation $\mathcal{F}_{C}$, it
is natural to analyze the dynamics inside each leaf of $\mathcal{F}_{C}$.
Clearly, distality can be established for points lying on distinct leaves.
The remaining case, where two points belong to the same leaf, is more subtle
and requires additional assumptions.

\begin{lemma}
\label{lemMSC01} The map $f^*$ is distal for pairs of points lying on distinct
leaves of $\mathcal{F}_{C}$.
\end{lemma}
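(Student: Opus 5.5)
The plan is to reduce everything to the second coordinate, where $f^*$ acts by the rigid rotation $R_\alpha$. Take $z_1=(x_1,y_1)$ and $z_2=(x_2,y_2)$ in $\mathbf{T}^2$ lying on distinct leaves of $\mathcal{F}_C$, so that $y_1\neq y_2$ in $\mathbf{T}^1$. By the form (\ref{deff*}) of $f^*$, for every $n\in\mathbb{Z}$ we have
$$
p_2\circ (f^*)^n(z_i)=y_i+n\alpha \bmod 1,\qquad i=1,2.
$$
This holds for negative $n$ as well. Indeed, $f^*$ maps $\mathbf{T}^1\times\{y\}$ onto $\mathbf{T}^1\times\{y+\alpha\}$ and is a homeomorphism, so $(f^*)^{-1}$ maps $\mathbf{T}^1\times\{y\}$ onto $\mathbf{T}^1\times\{y-\alpha\}$.

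The key step is that the vertical distance is invariant along orbits. Since $R_\alpha$ is an isometry of $\mathbf{T}^1$, we get
$$
d_{\mathbf{T}^1}\bigl(p_2\circ (f^*)^n(z_1),\,p_2\circ (f^*)^n(z_2)\bigr)=d_{\mathbf{T}^1}(y_1,y_2)>0\quad\text{for all } n\in\mathbb{Z}.
$$
We then use the flat metric on $\mathbf{T}^2$. For this metric $p_2:\mathbf{T}^2\to\mathbf{T}^1$ is $1$-Lipschitz, so $d_{\mathbf{T}^2}(w_1,w_2)\geq d_{\mathbf{T}^1}(p_2(w_1),p_2(w_2))$. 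It follows that
$$
\inf_{n\in\mathbb{Z}} d_{\mathbf{T}^2}\bigl((f^*)^n(z_1),(f^*)^n(z_2)\bigr)\geq d_{\mathbf{T}^1}(y_1,y_2)>0,
$$
which is exactly condition (\ref{eqDPH01}) for the pair $z_1,z_2$.

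If one wants the statement for $f$ itself, the remark after the definition of distality says that this property transfers under the conjugacy $\Phi$ of Proposition~\ref{propCFT01}. Concretely, $\Phi$ is a uniformly continuous homeomorphism with uniformly continuous inverse, so a positive infimum for $f^*$-orbits gives a positive infimum for the corresponding $f$-orbits. Here $\Phi$ sends leaves of $\mathcal{F}_G$ to leaves of $\mathcal{F}_C$, so distinct leaves correspond.

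There is no real obstacle here. The only points to check are that the conjugacy $h_1$ built in Proposition~\ref{propCFT01} makes the second coordinate exactly $y+\alpha$, with no residual dependence on $x$, and that the chosen metric on $\mathbf{T}^2$ dominates the vertical circle distance. The genuinely difficult case, pairs of points on the same leaf, is not part of this lemma.
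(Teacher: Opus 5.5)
Your proposal is correct and follows the paper's proof essentially verbatim. The second coordinate of $f^*$ is the rotation $R_\alpha$, so the vertical distance $d_{\mathbf{T}^1}(y_1,y_2)$ is preserved along orbits, and a product (or flat) metric on $\mathbf{T}^2$ dominates it, which bounds the infimum below by $d_{\mathbf{T}^1}(y_1,y_2)>0$; your remarks on negative iterates and on transferring the conclusion back to $f$ via the uniformly continuous conjugacy $\Phi$ are correct details that the paper leaves implicit.
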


\begin{proof}
Fix $s,t\in \mathbf{T}^{1}$ with $s\neq t$, and let $w\in \mathbf{T}%
^{1}\times \{s\}$ and $z\in \mathbf{T}^{1}\times \{t\}$. As $f^*$ acts in the 
vertical coordinate as  a rotation by $\alpha$, 
\[
d_{\mathbf{T}^{1}}(p_{2}\circ (f^*)^{n}(w),p_{2}\circ (f^*)^{n}(z))=d_{\mathbf{T}^{1}}(s,t)>0
\]
for all $n\in \mathbb{Z}$. Hence, for any product metric on $\mathbf{T}^{2}$%
, 
\[
\inf_{n\in \mathbb{Z}}d_{\mathbf{T}^{2}}((f^*)^{n}(w),(f^*)^{n}(z))\ge d_{\mathbf{T}%
^{1}}(s,t)>0,
\]
therefore, $f$ is distal for pairs of points lying on distinct leaves of $\mathcal{F}_{C}$.
\end{proof}

For completeness, we now prove that the map $f^{*}$ from expression (\ref{deff*}) is
minimal in the particular case when $\varphi (x,y):=\varphi (y)$.

\begin{proposition}
\label{propMSC02} Let $f^{*}:\mathbf{T}^{2}\to \mathbf{T}^{2}$ be given as
in~(\ref{deff*}), and suppose $\varphi (x,y):=\varphi (y)$. Then $f$ is
minimal.
\end{proposition}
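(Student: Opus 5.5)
The plan is to combine transitivity with distality and conclude by Lemma~\ref{lemDPH01} (Auslander). By Proposition~\ref{transitivao}, $f$ is transitive, and this is preserved under the conjugacy $\Phi$ of Proposition~\ref{propCFT01}. So $f^{*}$ is transitive, and it suffices to show that $f^{*}$ is distal. Lemma~\ref{lemMSC01} already handles pairs of points lying on distinct horizontal leaves. It remains to treat two distinct points $w=(x_1,y)$ and $z=(x_2,y)$ on the same leaf $\mathbf{T}^1\times\{y\}$.

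When $\varphi(x,y)=\varphi(y)$, the map is a skew product whose fibre maps are rigid rotations. Explicitly,
\[
(f^{*})^{n}(x,y)=\bigl(x+S_n(y)\bmod 1,\; y+n\alpha\bmod 1\bigr),
\]
where $S_n(y)$ is the same for every point of the fibre: it is a sum of the terms $\ell(y+j\alpha)+\varphi(y+j\alpha)$ over $0\le j<n$, with the usual convention for $n<0$. I would verify this by a one-line induction on $n$ using the formula for $f^{*}$, in both time directions. Consequently
\[
(f^{*})^{n}(w)-(f^{*})^{n}(z)=(x_1-x_2,\,0) \bmod \mathbb{Z}^2 \quad\text{for all } n\in\mathbb{Z}.
\]
Hence the distance $d_{\mathbf{T}^2}\bigl((f^{*})^{n}(w),(f^{*})^{n}(z)\bigr)=d_{\mathbf{T}^1}(x_1,x_2)>0$ is constant in $n$, and its infimum is positive. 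Together with Lemma~\ref{lemMSC01}, this shows $f^{*}$ is distal. Auslander's lemma then gives that $f^{*}$ is minimal, and so is $f=\Phi^{-1}\circ f^{*}\circ\Phi$, since minimality is invariant under conjugation.

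There is no serious obstacle. The one point needing care is that transitivity must be invoked in the right setting. Proposition~\ref{transitivao} is proved for the area-preserving map $f$, and it passes to $f^{*}$ through $\Phi$. If instead the statement is read for an arbitrary $f^{*}$ of this form preserving a measure homeomorphic to Lebesgue, I would rerun the proof of Proposition~\ref{transitivao}, which only uses that invariant measure, the absence of periodic points, and irrationality of $\alpha$. A fallback is to argue directly: irrationality of $\alpha$ makes the vertical factor minimal, and Auslander's result that distal maps are pointwise almost periodic then lets transitivity upgrade to minimality.
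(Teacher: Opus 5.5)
Your argument is correct and is essentially the paper's own proof. It uses transitivity from Proposition~\ref{transitivao}, Lemma~\ref{lemMSC01} for pairs on distinct leaves, the observation that when $\varphi$ depends only on $y$ the fibre maps are rotations so same-leaf distances are constant in $n$, and then Auslander's Lemma~\ref{lemDPH01}. One peripheral quibble: your proposed ``fallback'' still invokes transitivity (it only reproves Auslander's lemma via pointwise almost periodicity), so it does not dispense with Proposition~\ref{transitivao}, though the main argument never needs it.
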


\begin{proof}
By Lemma~\ref{lemMSC01}, it remains to prove that $f$ is distal for pairs of
distinct points lying on the same leaf of $\mathcal{F}_C$. 
Let $w=(x_{1},t)$ and $z=(x_{2},t)$ be distinct points in $\mathbf{T}%
^{1}\times \{t\}$. When $\varphi (x,y)$ does not depend on $x$, we obtain that 
\[
d_{\mathbf{T}^{2}}(f^{n}(w),f^{n}(z))=d_{\mathbf{T}^{1}}(x_{1},x_{2})\text{
for every }n\in \mathbb{Z}.
\]
Therefore, $f$ is distal, and thus, it is minimal.
\end{proof}

\subsubsection{Completion of the Proof of Theorem~\ref{thm:C}}

Before proceeding with the proof, we need the following technical lemmas.

\begin{lemma}
\label{lemPTC01} Let $\Phi :\mathbf{T}^{2}\to \mathbf{T}^{2}$ be a
homeomorphism homotopic to the identity and bi-Lipschitz. Then there exist
constants $0<d<D$ such that 
\[
d.\mathrm{Leb}_{\mathrm{T}^{2}}(A)\leq \mu (A)\leq D.\mathrm{Leb}_{\mathbf{T}%
^{2}}(A),
\]
where $\mu :=\Phi _{*}\mathrm{Leb}_{\mathbf{T}^{2}}$ and $A$ is any Borel
subset of $\mathbf{T}^{2}$.
\end{lemma}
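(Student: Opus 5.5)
The plan is to compare $\mu(A)=\mathrm{Leb}(\Phi^{-1}(A))$ with $\mathrm{Leb}(A)$ through the area-distortion bounds of the bi-Lipschitz map $\Phi^{-1}$. Let $\Lambda\ge 1$ be a common bi-Lipschitz constant, so that for all $z,w\in\mathbf{T}^2$
\[
\Lambda^{-1}d(z,w)\le d(\Phi(z),\Phi(w))\le \Lambda\, d(z,w).
\]
The same inequalities then hold for $\Phi^{-1}$. Since both measures are Borel and Lebesgue measure on $\mathbf{T}^2$ is outer regular, it suffices to prove the two inequalities for open sets, or even for small balls. One then passes to general Borel sets by approximation and covering.

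\textbf{Upper bound.} I would use the standard fact that a $\Lambda$-Lipschitz map $g$ of a 2-dimensional Riemannian manifold into itself satisfies $\mathrm{Leb}(g(E))\le c\Lambda^2\,\mathrm{Leb}(E)$, with $c$ a universal constant. The argument goes as follows.
\begin{itemize}
\item Lipschitz maps scale two-dimensional Hausdorff measure by at most $\Lambda^2$.
\item On the flat torus, two-dimensional Hausdorff measure is a constant multiple of Lebesgue measure.
\item Concretely, given $\varepsilon>0$, cover $E$ by an open set $O$ with $\mathrm{Leb}(O)<\mathrm{Leb}(E)+\varepsilon$.
\item By the Vitali covering lemma, write $O$ up to a null set as a disjoint countable union of small balls $B(z_i,r_i)$, with radii smaller than the injectivity radius $1/2$, so that the balls are Euclidean discs.
\item Each image satisfies $\Phi^{-1}(B(z_i,r_i))\subset B(\Phi^{-1}(z_i),\Lambda r_i)$, whose area is at most $\pi\Lambda^2r_i^2=\Lambda^2\mathrm{Leb}(B(z_i,r_i))$.
\item The exceptional null set is mapped to a null set, because a Lipschitz map preserves Hausdorff-measure-zero sets, by the same covering estimate.
\end{itemize}
Summing over the balls gives $\mu(E)=\mathrm{Leb}(\Phi^{-1}(E))\le\Lambda^2\,\mathrm{Leb}(E)$, so we may take $D=\Lambda^2$.

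\textbf{Lower bound.} Apply the same estimate to $\Phi$ instead of $\Phi^{-1}$, with $E$ replaced by $\Phi^{-1}(A)$. This gives
\[
\mathrm{Leb}(A)=\mathrm{Leb}(\Phi(\Phi^{-1}(A)))\le\Lambda^2\,\mathrm{Leb}(\Phi^{-1}(A))=\Lambda^2\mu(A),
\]
so $d=\Lambda^{-2}$ works. If one insists on the strict inequality $d<D$, it holds automatically unless $\Lambda=1$; in that case replace $D$ by $2D$.

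\textbf{Role of the hypotheses and main obstacle.} The homotopy-to-identity hypothesis is not needed beyond $\Phi$ being a homeomorphism. The only delicate step is the measure-theoretic covering argument: one must make sure the Vitali decomposition covers $O$ up to a null set, and that this null set has null image under a Lipschitz map. I would handle this cleanly by working with $\mathcal{H}^2$ and the inequality $\mathcal{H}^2(g(E))\le\Lambda^2\mathcal{H}^2(E)$, which follows directly from the definition of Hausdorff measure, since images of $\delta$-covers are $\Lambda\delta$-covers. Together with $\mathcal{H}^2=c\,\mathrm{Leb}$ on $\mathbf{T}^2$, this avoids Vitali altogether.
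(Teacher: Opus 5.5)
Your proof is correct and follows essentially the same route as the paper. Both cover by small balls, use that a $\Lambda$-Lipschitz map sends $B_r$ into $B_{\Lambda r}$ to get an area distortion of at most $\Lambda^2$, and apply this once to $\Phi^{-1}$ (upper bound) and once to $\Phi$ with $E=\Phi^{-1}(A)$ (lower bound). Your Vitali/Hausdorff-measure justification also supplies a detail the paper takes for granted: that $\mathrm{Leb}(E)$ can be approximated by the total area of a countable cover by small balls.
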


\begin{proof}
Since $\Phi $ is bi-Lipschitz, there exist constants $0<k_{1}<k_{2}$
satisfying 
\[
k_{1}d_{\mathbf{T}^{2}}(z,w)\le d_{\mathbf{T}^{2}}(\Phi (z),\Phi (w))\le
k_{2}d_{\mathbf{T}^{2}}(z,w),
\]
for all $z,w\in \mathbf{T}^{2}$.
Let $E\subset \mathbf{T}^{2}$ be a Borel set and $\varepsilon>0$. Choose a
covering 
$$
E\subset \bigcup_{i=1}^{\infty }B_{r_{i}}(z_{i})
$$ 
such that each $r_{i}<1/(10k_{2})$ and  
$\sum_{i=1}^{\infty }\mathrm{Leb}_{\mathbf{T}^{2}}(B_{r_{i}}(z_{i}))\le 
\mathrm{Leb}_{\mathbf{T}^{2}}(E)+\varepsilon.$

 As, $\Phi (B_{r_{i}}(x_{i}))\subset
B_{k_{2}r_{i}}(\Phi (x_{i})),$ we get that  
\[
\mathrm{Leb}_{\mathbf{T}^{2}}(\Phi (E))\le k_{2}^{2}\sum_{i=1}^{\infty }%
\mathrm{Leb}_{\mathbf{T}^{2}}(B_{r_{i}}(z_{i}))\le k_{2}^{2}(\mathrm{Leb}_{%
\mathbf{T}^{2}}(E)+\varepsilon ).
\]
Letting $\varepsilon \to 0$ yields 
\[
\mathrm{Leb}_{\mathbf{T}^{2}}(\Phi (E))\le k_{2}^{2}\mathrm{Leb}_{\mathbf{T}%
^{2}}(E).
\]
Now let $\mu :=\Phi _{*}\mathrm{Leb}_{\mathbf{T}^{2}}$. Applying the
previous inequality to $E=\Phi ^{-1}(A)$ gives 
\[
\dfrac{1}{k_{2}^{2}}\mathrm{Leb}_{\mathbf{T}^{2}}(A)\le \mu (A).
\]
And the same argument above applied to $\Phi ^{-1}$ yields 
\[
\mu (A)\le \dfrac{1}{k_{1}^{2}}\mathrm{Leb}_{\mathbf{T}^{2}}(A).
\]
Therefore, setting $d:=1/k_{2}^{2}$ and $D:=1/k_{1}^{2}$, the lemma is proved.
\end{proof}


\begin{lemma}
\label{lemPTC02} Assume that the foliation $\mathcal{F}_{G}=\{\mathrm{Graph}%
(\phi _{t})\}_{t\in \mathbf{T}^{1}}$ is Lipschitz as in definition (8) of subsection 1.1,
and that the conjugacy $h_{1}:\mathbf{T}^{1}\to \mathbf{T}^{1}$
between $\psi $ and $R_{\alpha },$ given
in Proposition~\ref{propCFT01} is bi-Lipschitz. Then the map $\Phi,$
defined in the same Proposition, is also bi-Lipschitz.
\end{lemma}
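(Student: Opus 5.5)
Since $\Phi=\Phi_2\circ\Phi_1$ and a composition of bi-Lipschitz maps is bi-Lipschitz, I will prove separately that $\Phi_1$ and $\Phi_2$ (both from Proposition~\ref{propCFT01}) are bi-Lipschitz. For $\Phi_2(x,y)=(x,h_1(y))$ this is immediate from the hypothesis on $h_1$. Using the product metric (equivalent to the flat one), if $h_1$ has bi-Lipschitz constants $c_1\le c_2$, then $\Phi_2$ is bi-Lipschitz with constants $\min\{1,c_1\}$ and $\max\{1,c_2\}$, and $\Phi_2^{-1}(x,y)=(x,h_1^{-1}(y))$ behaves the same way. So the real work is $\Phi_1(x,y)=(x,t)$, where $y=\phi_t(x)$, with the parametrization normalized so that $\phi_t(0)=t$.

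For $\Phi_1^{-1}(x,s)=(x,\phi_s(x))$ I bound the distance between $(x,\phi_s(x))$ and $(x',\phi_{s'}(x'))$. The triangle inequality through the intermediate point $(x',\phi_s(x'))$ gives two terms. The first is $d(\phi_s(x),\phi_s(x'))\le K\,d(x,x')$, by the uniform $K$-Lipschitz bound from Lemma~\ref{lemTC07}. The second is $d(\phi_s(x'),\phi_{s'}(x'))\le K_{Fol}\,d(s,s')$, by inequality (\ref{condjulian}) in the proof of the preceding lemma, obtained by taking $x'=0$ in the Lipschitz-foliation definition together with $\phi_t(0)=t$. Hence $\Phi_1^{-1}$ is Lipschitz with constant about $1+K+K_{Fol}$.

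For $\Phi_1$ itself, I take $z=(x,\phi_t(x))$ and $z'=(x',\phi_{t'}(x'))$ and need $d(t,t')\lesssim d(z,z')$. The other inequality in (\ref{condjulian}) gives $d(t,t')\le K_{Fol}\,d(\phi_t(x),\phi_{t'}(x))$. Then, again through the point $(x,\phi_{t'}(x))$,
\[
d(\phi_t(x),\phi_{t'}(x))\le d(\phi_t(x),\phi_{t'}(x'))+d(\phi_{t'}(x'),\phi_{t'}(x))\le d(z,z')+K\,d(x,x').
\]
Since $d(x,x')\le d(z,z')$, it follows that $d(\Phi_1 z,\Phi_1 z')\le \bigl(1+K_{Fol}(1+K)\bigr)d(z,z')$.

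The main obstacle is that these are circle distances. On $\mathbf{T}^1$, $d(\phi_t(x),\phi_{t'}(x))$ need not correspond to the lift-difference $|\widehat g_1(x)-\widehat g_2(x)|$ used in definition (8), because the chosen lifts could differ by nearly an integer. I would handle this by working in the cylinder and plane with lifts $\widehat\phi_r$. I would choose the lift of $t'$ closest to $t$ and apply definition (8) to the corresponding lifted leaves, which are monotone in $r$ with $\widehat\phi_{r+1}=\widehat\phi_r+1$. Then I would argue by ordering, since the leaves between two lifted leaves are trapped between them, that the minimizing representative is preserved up to a bounded factor. For pairs at distance bounded below, a compactness argument works, because all maps involved are homeomorphisms of a compact space, so only small scales matter. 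There the local lifted estimates above apply directly.
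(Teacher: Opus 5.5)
Your proof is correct and is essentially the paper's argument: the same splitting $\Phi=\Phi_2\circ\Phi_1$, and for $\Phi_1^{\pm1}$ the same triangle inequalities through an intermediate point, combined with (\ref{condjulian}) and the uniform $K$-Lipschitz bound, differing only cosmetically in the constants. The lift issue you raise is one the paper passes over by stating the Lipschitz-foliation condition directly in circle distances, and it needs no compactness argument: for lifted leaves $\widehat g_1<\widehat g_2<\widehat g_1+1$ and $a=\widehat g_2-\widehat g_1\in\,]0,1[$, applying definition (8) to the pairs $(\widehat g_1,\widehat g_2)$ and $(\widehat g_2,\widehat g_1+1)$ gives $\min\{a(x),1-a(x)\}\le K_{Fol}\min\{a(x'),1-a(x')\}$, which is exactly the circle-distance inequality.
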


\begin{proof}
Since $\Phi:=\Phi_2\circ \Phi_1$, where $\Phi_1$ is given in~\eqref{eqCFT02}
and $\Phi_2$ is given in~\eqref{eqCFT03}, respectively, it suffices to prove
that both maps are bi-Lipschitz. As $\Phi_2=\operatorname{id}_{\mathbf{T}%
^1}\times h_1$ and both $\operatorname{id}_{\mathbf{T}^1}$ and $h_1$ are
bi-Lipschitz, $\Phi_2$ is bi-Lipschitz with respect to any product metric on 
$\mathbf{T}^2$. Therefore, it remains only to show that $\Phi_1$ is
bi-Lipschitz.

Let $(x,y),(x^{\prime },y^{\prime })\in \mathbf{T}^{2}$, where $y=\phi
_{t}(x)$ and $y^{\prime }=\phi _{t^{\prime }}(x^{\prime })$. Since
expression (\ref{condjulian}) says that
\[
d_{\mathbf{T}^{1}}(\phi _{t}(x),\phi _{t^{\prime }}(x))\ge \frac{1}{K_{Fol}}%
d_{\mathbf{T}^{1}}(t,t^{\prime }),
\]
and each $\phi _{t}$ is $K$-Lipschitz, we obtain 
\[
\frac{1}{K_{Fol}}d_{\mathbf{T}^{1}}(t,t^{\prime })\le d_{\mathbf{T}%
^{1}}(y,y^{\prime })+Kd_{\mathbf{T}^{1}}(x,x^{\prime }).
\]
Hence, 
\[
d_{\mathbf{T}^{2}}(\Phi _{1}(x,y),\Phi _{1}(x^{\prime },y^{\prime })):=d_{%
\mathbf{T}^{1}}(x,x^{\prime })+d_{\mathbf{T}^{1}}(t,t^{\prime })\le \Xi .d_{%
\mathbf{T}^{2}}((x,y),(x^{\prime },y^{\prime })),
\]
where $\Xi :=\max \{1+K.K_{Fol},K_{Fol}\}$).

Similarly, using 
\[
d_{\mathbf{T}^{1}}(\phi _{s}(x),\phi _{s^{\prime }}(x^{\prime}))\le K.d_{\mathbf{T}%
^{1}}(x,x^{\prime})+K_{Fol}.d_{\mathbf{T}^{1}}(s,s^{\prime }),
\]
we obtain 
\[
d_{\mathbf{T}^{2}}(\Phi _{1}^{-1}(x,s),\Phi _{1}^{-1}(x^{\prime },s^{\prime
}))\le \Xi ^{\prime }.d_{\mathbf{T}^{2}}((x,s),(x^{\prime },s^{\prime })),
\]
where $\Xi ^{\prime }:=\max \{1+K,K_{Fol}\}$. Therefore, $\Phi _{1}$ is
bi-Lipschitz.
\end{proof}

As was explained in the beginning of subsection 3.3.2, the proof of 
Theorem~\ref{thm:C} is now reduced to showing that the map $f^{*}$
is distal for pairs of points lying on the same leaf of $\mathcal{F}_{C}$. 
 Thus, Theorem~\ref{thm:C} now follows from Lemma~\ref
{lemPTC02} and the following lemma.

\begin{lemma}
\label{lemPTC03} Let $f^{*}$ be as in~(\ref{deff*}). Assume moreover that the map $%
\Phi $ given by Proposition~\ref{propCFT01} is bi-Lipschitz. Then $f$ is
distal for pairs of points lying on the same leaf of $\mathcal{F}_{C}$.
\end{lemma}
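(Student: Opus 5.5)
The plan is to use the invariant measure $\mu=\Phi_*\mathrm{Leb}_{\mathbf{T}^2}$ of $f^*$ and its comparability with Lebesgue measure, which holds by Lemma~\ref{lemPTC01} since $\Phi$ is bi-Lipschitz, to show that $f^*$ cannot shrink arcs of horizontal leaves by more than a fixed factor. Write $f^*_{n,s}:\mathbf{T}^1\times\{s\}\to\mathbf{T}^1\times\{s+n\alpha\}$ for the restriction of $(f^*)^n$ to a leaf. This is an orientation preserving circle homeomorphism, because $f^*$ preserves $\mathcal{F}_C$ and is homotopic to a Dehn twist. The goal is the inequality
\[
\mathrm{length}\bigl(f^*_{n,s}(J)\bigr)\ \geq\ \frac{d}{D}\,\mathrm{length}(J)
\]
for every $s\in\mathbf{T}^1$, every $n\in\mathbb{Z}$ and every arc $J\subset\mathbf{T}^1\times\{s\}$, where $0<d<D$ are the constants of Lemma~\ref{lemPTC01}. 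Distality on a leaf then follows directly. Take $w\neq z$ on $\mathbf{T}^1\times\{t\}$, and let $J_1,J_2$ be the two arcs they determine. The images $f^*_{n,t}(J_1)$ and $f^*_{n,t}(J_2)$ are the two arcs joining $(f^*)^n(w)$ and $(f^*)^n(z)$. Hence the distance along the leaf, and so in $\mathbf{T}^2$ for a product metric, is at least $\frac{d}{D}\min\{|J_1|,|J_2|\}>0$ for all $n$.

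To prove the inequality, I first disintegrate $\mu$ along $\mathcal{F}_C$. The push-forward of $\mu$ by $p_2$ is a probability measure on $\mathbf{T}^1$ invariant under $R_\alpha$, so it is Lebesgue by unique ergodicity. Rokhlin's theorem then gives a measurable family $\{\nu_s\}$ of probabilities on the leaves with $\mu=\int\nu_s\,ds$. By uniqueness of the disintegration and the $f^*$-invariance of $\mu$, we get $(f^*_{1,s})_*\nu_s=\nu_{s+\alpha}$ for a.e. $s$. Iterating over the countably many $n$ gives $(f^*_{n,s})_*\nu_s=\nu_{s+n\alpha}$ for all $n$, for $s$ in a full measure set $G$.

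Next, the bounds $d\,\mathrm{Leb}\le\mu\le D\,\mathrm{Leb}$ are applied to product sets $I\times[s,s+\eta]$ with $I$ a rational arc. Lebesgue differentiation in $s$ then yields $d\,|I|\le\nu_s(I)\le D\,|I|$ for all rational arcs and a.e. $s$, hence for all arcs. Shrinking $G$ if necessary, for $s\in G$ and any arc $J$ we obtain
\[
D\,|f^*_{n,s}(J)|\ \ge\ \nu_{s+n\alpha}\bigl(f^*_{n,s}(J)\bigr)=\nu_s(J)\ \ge\ d\,|J|,
\]
provided also $s+n\alpha$ is good, which holds after removing a further null set.

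The main obstacle is that this argument only covers a.e. leaf, while distality is needed on every leaf, including the given one $t$. I would remove this restriction by continuity for each fixed $n$. Pick $s_j\in G$ with $s_j\to t$, and let $J_j$ be the arcs on $\mathbf{T}^1\times\{s_j\}$ with the same $x$-endpoints as $J$. Since $(f^*)^n$ is a homeomorphism of $\mathbf{T}^2$ and is orientation preserving on leaves, the image arcs $f^*_{n,s_j}(J_j)$ converge to $f^*_{n,t}(J)$, with endpoints converging. Their lengths therefore converge, and the inequality passes to the limit at $s=t$. A secondary point to check carefully is the measurability and uniqueness needed in the disintegration step; this is standard but should be stated explicitly.
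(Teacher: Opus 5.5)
Your proof is correct, but it takes a different route from the paper's.

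\textbf{The paper's route.} The paper never disintegrates $\mu$. It argues by contradiction directly on the given leaf $\mathbf{T}^1\times\{t\}$. It passes to a subsequence $n_k$ along which one of the two arcs $\eta$ joining $w_1,w_2$ satisfies $f^{n_k}(\eta)\to z$. Then, for each fixed $k$, it uses uniform continuity of the single iterate $f^{n_k}$ to choose a rectangle $R_k=[x_1,x_2]\times[t,t+\delta_k]$ thin enough that $f^{n_k}(R_k)$ has Lebesgue area below $\frac{13}{10}\,\mathrm{diam}(f^{n_k}(\eta))\,\delta_k$. Here $f^{n_k}(R_k)$ lies in a horizontal strip of height exactly $\delta_k$, because $f^*$ rotates the vertical coordinate rigidly. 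Comparing $\mu(R_k)=\mu(f^{n_k}(R_k))$ through Lemma~\ref{lemPTC01} gives $\mathrm{diam}(f^{n_k}(\eta))>\frac{10d}{13D}\,\mathrm{diam}(\eta)$, which contradicts $f^{n_k}(\eta)\to z$. In effect, the paper differentiates $\mu$ vertically at the specific leaf $t$ using continuity of $f^{n_k}$. It therefore needs no Rokhlin theorem, no uniqueness of conditionals, no unique ergodicity of $R_\alpha$, and no Lebesgue differentiation, and it never has to pass from almost every leaf to every leaf.

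\textbf{Your route.} You do the differentiation measure-theoretically, which controls only a full-measure set of leaves. That costs you the extra continuity step at fixed $n$. You handle it correctly: the limit image arc is a proper arc of length in $]0,1[$, so the oriented arc lengths do converge.

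\textbf{What your route gains.} You get a cleaner structural statement: invariant leaf measures $\nu_s$ with $d|I|\le\nu_s(I)\le D|I|$ and $(f^*_{n,s})_*\nu_s=\nu_{s+n\alpha}$. These give $|f^*_{n,s}(J)|\ge\frac{d}{D}|J|$ uniformly in $n,s,J$. The same computation gives $|f^*_{n,s}(J)|\le\frac{D}{d}|J|$, so all leaf maps are uniformly bi-Lipschitz. Distality on leaves then follows directly, without a contradiction argument.
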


\begin{proof}

Let $w_{1},w_{2}\in \mathbf{T}^{1}\times \{t\}$ for some $t\in \mathbf{T}^{1}
$ be different points. Suppose that  
\begin{equation}
\lim_{k\to \infty }d_{\mathbf{T}^{2}}(f^{n_{k}}(w_{1}),f^{n_{k}}(w_{2}))=0.
\label{eqCFT05}
\end{equation}
If we arrive at a contradiction, then the lemma is proved. 

 Write $w_{1}:=(x_{1},t)$ and $w_{2}:=(x_{2},t)$ with $%
x_{1}<x_{2}$. By~\eqref{eqCFT05}, after passing to a subsequence if
necessary, we can suppose that as $k\to \infty,$ 
\begin{equation}
f^{n_{k}}(w_{1})\longrightarrow z\quad \text{and}\quad
f^{n_{k}}(w_{2})\longrightarrow z.  \label{eqCFT06}
\end{equation}

By~\eqref{eqCFT05} and~\eqref{eqCFT06}, either for $\eta$ equals $[x_{1},x_{2}]\times \{t\}$ or
$[x_{2},x_{1}]\times \{t\},$ again passing to a subsequence if needed, $f^{n_{k}}(\eta)$ converges to $z$. 
So, assuming $\eta=[x_{1},x_{2}]\times \{t\}$, for every $\varepsilon >0$ there
exists $k_{0}\in \mathbb{N}$ such that 
\[
f^{n_{k}}([x_{1},x_{2}]\times \{t\})\subset B(z,\varepsilon ),\quad \text{%
for all}\ k\geq k_{0}.
\]
Since $f^{n_{k}}$ is uniformly continuous on a neighborhood of the compact
segment $[x_{1},x_{2}]\times \{t\}$, for each $k\ge k_{0}$, there exists $%
\delta _{k}>0$ such that $R_{k}:=[x_{1},x_{2}]\times [t,t+\delta _{k}]$
satisfies
\[
f^{n_{k}}(R_{k})\subset B(z,\varepsilon ).
\]
Moreover, by taking $\delta _{k}$ smaller if necessary, we may assume that 
\[
\mathrm{diam}(f^{n_{k}}([x_{1},x_{2}]\times \{a\}))<\dfrac{11}{10}\mathrm{%
diam}(f^{n_{k}}([x_{1},x_{2}]\times \{t\})),
\]
for all $t\leq a\leq t+\delta _{k}.$ Since $f^{n_{k}}(\{x_{i}\}\times
[t,t+\delta _{k}])$ is a graph over both coordinate axes for $i=1,2$, we may
further decrease $\delta _{k}$ if necessary, so that the horizontal distance 
$a_{k,i}$ between the vertical segments through $f^{n_{k}}(x_{i},t)$ and $%
f^{n_{k}}(x_{i},t+\delta _{k})$ satisfies 
\[
a_{k,i}<\dfrac{1}{10}\mathrm{diam}(f^{n_{k}}([x_{1},x_{2}]\times \{t\})),\quad
i=1,2.
\]
Consequently, 
\begin{equation}
\mathrm{Leb}_{\mathbf{T}^{2}}(f^{n_{k}}(R_{k}))< \dfrac{13}{10}\mathrm{%
diam}(f^{n_{k}}([x_{1},x_{2}]\times \{t\})).\delta _{k}  \label{eqCFT07}
\end{equation}
By Lemma~\ref{lemPTC01}, and since $f$ preserves the probability measure $%
\mu :=\mathrm{Leb}_{\mathbf{T}^{2}}\circ \Phi ^{-1}$, there exist constants $%
0<d<D$ such that 
\[
d.\mathrm{diam}([x_{1},x_{2}]\times \{t\})\delta _{k}\leq \mu (R_{k})=\mu
(f^{n_{k}}(R_{k}))<D.\frac{13}{10}\mathrm{diam}(f^{n_{k}}([x_{1},x_{2}]%
\times \{t\}))\delta _{k}.
\]
Therefore, 
\[
\mathrm{diam}(f^{n_{k}}([x_{1},x_{2}]\times \{t\}))>\dfrac{10d}{13D}.\mathrm{%
diam}([x_{1},x_{2}]\times \{t\}).
\]
On the other hand, from our choice of $\eta$, 
the diameter of $f^{n_{k}}([x_{1},x_{2}]\times \{t\})$ 
converges to zero. Hence, for $k$
sufficiently large, the above inequality does not hold, a contradiction
which proves that $f$ is distal for points lying on the same leaf of $%
\mathcal{F}_{C}.$
\end{proof}

As we explained before, this concludes the proof of Theorem \ref{thm:C}.

\end{document}